\newtheorem{theorem}{Theorem}[section]
\newtheorem{definition}[theorem]{Definition}
\newtheorem{conjecture}[theorem]{Conjecture}
\newtheorem{lemma}[theorem]{Lemma}
\numberwithin{equation}{section}
\DeclareMathOperator{\dist}{dist}
\DeclareMathOperator{\supp}{supp}
\DeclareMathOperator{\spn}{span}
\DeclareMathOperator{\sgn}{sgn}
\DeclareFontFamily{U}{mathx}{\hyphenchar\font45}
\DeclareFontShape{U}{mathx}{m}{n}{
      <5> <6> <7> <8> <9> <10>
      <10.95> <12> <14.4> <17.28> <20.74> <24.88>
      mathx10
      }{}
\DeclareSymbolFont{mathx}{U}{mathx}{m}{n}
\DeclareMathAccent{\widecheck}{0}{mathx}{"71}
\DeclareMathAccent{\wideparen}{0}{mathx}{"75}
\title[Restricted families of projections onto planes]{Restricted families of projections onto planes: The general case of nonvanishing  geodesic curvature}
\author[T.~L.~J.~Harris]{Terence L.~J.~Harris}
\address{Department of Mathematics, Cornell University, Ithaca, NY 14853-4201, USA}
\email{tlh236@cornell.edu}
\subjclass[2020]{42B10; 28E99}
\keywords{Orthogonal projections, Hausdorff dimension, Fourier transform}
\begin{document}
\begin{abstract} \begin{sloppypar} It is shown that if $\gamma: [a,b] \to S^2$ is $C^3$ with $\det(\gamma, \gamma', \gamma'') \neq 0$, and if $A \subseteq \mathbb{R}^3$ is a Borel set, then $\dim \pi_{\theta} (A) \geq \min\left\{ 2,\dim A, \frac{ \dim A}{2} + \frac{3}{4} \right\}$ for a.e.~$\theta \in [a,b]$, where $\pi_{\theta}$ denotes projection onto the orthogonal complement of $\gamma(\theta)$ and ``$\dim$'' refers to Hausdorff dimension. This partially resolves a conjecture of Fässler and Orponen in the range $1< \dim A \leq 3/2$, which was previously known only for non-great circles. For $3/2 < \dim A < 5/2$ this improves the known lower bound for this problem. \end{sloppypar} 
\end{abstract}
\maketitle

\section{Introduction}
Let $S^2$ be the unit sphere in $\mathbb{R}^3$. Given a curve $\gamma: [a,b] \to S^2$ and $\theta \in [a,b]$, let $\pi_{\theta}$ be the orthogonal projection onto $\gamma(\theta)^{\perp} \subseteq \mathbb{R}^3$, given by
\[ \pi_{\theta}(x) = x - \langle x, \gamma(\theta) \rangle \gamma(\theta), \quad x \in \mathbb{R}^3. \]
Let $\dim A$ denote the Hausdorff dimension of a set $A \subseteq \mathbb{R}^3$.
\begin{theorem} \label{mainthm} Let $\gamma: [a,b] \to S^2$ be $C^3$ with $\det( \gamma, \gamma', \gamma'')$ nonvanishing. If $A \subseteq \mathbb{R}^3$ is an analytic set, then 
\[ \dim \pi_{\theta}(A) \geq \min\left\{ 2,\dim A, \frac{ \dim A}{2} + \frac{3}{4} \right\}, \]
for a.e.~$\theta \in [a,b]$.  \end{theorem}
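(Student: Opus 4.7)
The plan is to apply the energy-integral method to a Frostman measure on $A$. Fix $s<\dim A$ and, by Frostman's lemma, select a compactly supported Borel probability measure $\mu$ on $A$ with $I_s(\mu)<\infty$, equivalently $\int_{\mathbb{R}^3}|\hat\mu(\xi)|^2|\xi|^{s-3}\,d\xi<\infty$. Standard potential-theoretic arguments reduce the theorem to showing that for every $t<\min\{2,s,s/2+3/4\}$,
\[
\int_a^b I_t(\pi_{\theta*}\mu)\,d\theta<\infty.
\]
Using Plancherel on $\gamma(\theta)^\perp$ (so that the 2D Fourier transform of $\pi_{\theta*}\mu$ is $\hat\mu|_{\gamma(\theta)^\perp}$) and a change of variables on the $3$-manifold $\Sigma=\{(\theta,\xi)\in[a,b]\times\mathbb{R}^3:\xi\cdot\gamma(\theta)=0\}$ (whose projection to $\mathbb{R}^3$ has Jacobian $|\xi\cdot\gamma'(\theta)|$, via a short calculation using $\gamma\perp\gamma'$) converts the double integral to the weighted single $\mathbb{R}^3$-integral
\[
\int_a^b I_t(\pi_{\theta*}\mu)\,d\theta \asymp \int_{\mathbb{R}^3}|\hat\mu(\xi)|^2|\xi|^{t-3}\,w(\xi/|\xi|)\,d\xi,
\]
where $w:S^2\to[1,\infty]$ is bounded outside a neighborhood of the binormal curve
\[
\Gamma^* = \{\pm\gamma(\theta)\times\gamma'(\theta)/|\gamma(\theta)\times\gamma'(\theta)|:\theta\in[a,b]\}\subset S^2
\]
and satisfies $w(\omega)\asymp\dist(\omega,\Gamma^*)^{-1/2}$ near $\Gamma^*$. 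The hypothesis $\det(\gamma,\gamma',\gamma'')\neq 0$ makes $\Gamma^*$ itself a $C^1$ curve with nonvanishing geodesic curvature, and in particular $w\in L^p(S^2)$ for all $p<2$.

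The easier branches of the minimum follow at once. Restricting to dyadic shells $\{|\xi|\sim R\}$ and combining $\int_{|\xi|\sim R}|\hat\mu|^2\lesssim R^{3-s}$ with the global angular integrability of $w$ yields convergence whenever $t<s$, which is the $\dim A$ branch. The $t<2$ branch then reduces to the $s/2+3/4$ branch by a Frostman-subset argument: when $\dim A\geq 5/2$, applying the $s/2+3/4$ bound to a subset of $A$ of dimension $5/2-\epsilon$ gives $\dim\pi_\theta(A)\geq 2-\epsilon/2$, and $\epsilon\downarrow 0$ produces $\dim\pi_\theta(A)\geq 2$. The nontrivial branch $t<s/2+3/4$ is therefore the core of the proof. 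I would attack it by further decomposing $\hat\mu$ by angular distance $\rho\in[R^{-1/2},1]$ from $\Gamma^*$, writing the contribution of
\[
S_{R,\rho}=\{\xi\in\mathbb{R}^3:|\xi|\sim R,\ \dist(\xi/|\xi|,\Gamma^*)\sim\rho\}
\]
--- an $R\rho$-tubular neighborhood of the nondegenerate cone over $\Gamma^*$ --- to the weighted integral as $R^{t-3}\rho^{-1/2}\int_{S_{R,\rho}}|\hat\mu|^2$. The required dyadic convergence at $t<s/2+3/4$ would then follow from a Wolff/Kakeya-type refined $L^2$-slab bound on $\int_{S_{R,\rho}}|\hat\mu|^2$, sharpest in the ``Wolff window'' $\rho\sim R^{-1/2}$, that beats the trivial shell bound $R^{3-s}$.

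The principal obstacle is this refined slab estimate. It is essentially an $s$-weighted Kakeya/incidence bound for $\delta$-tubes tangent to the nondegenerate cone over $\Gamma^*$, and is the analogue in the general (non-circular, non-planar) setting of the tangency bounds used for small circles in previous work on this problem. Under $C^3$ regularity and nonvanishing geodesic curvature, the cone over $\Gamma^*$ is a ruled surface with nondegenerate second fundamental form away from the apex, which makes the bound amenable to modern Fourier-analytic techniques. I would expect it to follow from some combination of Bourgain--Demeter--Guth-type $\ell^2$-decoupling for the nondegenerate curve $\Gamma^*$, a refined incidence count between $\delta$-caps on $\Gamma^*$ and $\delta$-balls in $\supp\mu$ (exploiting the Frostman regularity of $\mu$), and an $\epsilon$-removal argument to upgrade the resulting $L^p$-estimate into the sharp Hausdorff-dimensional conclusion.
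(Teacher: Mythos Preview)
Your reduction to the weighted integral $\int|\widehat\mu(\xi)|^2|\xi|^{t-3}w(\xi/|\xi|)\,d\xi$ with $w(\omega)\asymp\dist(\omega,\Gamma^*)^{-1/2}$ is correct and matches the paper's starting point (see \eqref{stp1}--\eqref{unitspeed}), but the decisive ``refined $L^2$-slab bound'' on $\int_{S_{R,\rho}}|\widehat\mu|^2$ is left as an expectation. The toolkit you name---cone decoupling, refined incidences exploiting Frostman regularity, $\epsilon$-removal---is essentially what \cite{harris20} already deployed (via a refined Strichartz inequality for the cone), and that paper obtained a bound strictly weaker than $\dim A/2+3/4$ on $3/2<\dim A<5/2$. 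Your sketch gives no indication of the extra idea that closes this gap. A smaller but related issue: the claim that the $\dim A$ branch follows from the shell bound $\int_{|\xi|\sim R}|\widehat\mu|^2\lesssim R^{3-s}$ together with $w\in L^p(S^2)$ is incorrect, since $w$ is unbounded on every shell and $|\widehat\mu|^2$ can concentrate angularly where $w$ blows up; for $1<\dim A\le 3/2$ this branch is itself part of the main content of the theorem and was previously open for general curves.

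The paper's missing ingredient is a bootstrapping structure absent from your outline. Rather than bound the energy integral directly, the paper defines a quantity $\alpha_0$ (Definition~\ref{alpha0defn}) as the best exponent in an averaged inequality over families of bad discs, and proves the recursion $\alpha_0\ge\min\{2,\alpha,\alpha/3+1/2+\alpha_0/3\}$ (Lemma~\ref{conversion}), whose fixed point is exactly $\min\{2,\alpha,\alpha/2+3/4\}$. The recursion is driven by a good--bad wave-packet decomposition: tubes carrying excess $\mu$-mass are declared bad and their contribution is bounded by \emph{re-invoking} the defining inequality of $\alpha_0$ after rescaling (Lemma~\ref{badpart}); the good tubes, now equipped with a per-tube mass cap, are handled by refined Strichartz (Lemma~\ref{kneqjcase}), and that mass cap is precisely what makes the Strichartz gain sharp enough. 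This self-referential step is what lifts the exponent above \cite{harris20} and simultaneously removes the dependence on the Orponen--Venieri lemma that had restricted earlier arguments to non-great circles.
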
 
This partially resolves Conjecture~1.6 from \cite{fasslerorponen14} in the range $\dim A \leq 3/2$, for projections onto planes. In the range $1 < \dim A \leq 3/2$ this was previously known in the special case of non-great circles; due to Orponen and Venieri~\cite{venieri}. In the range $3/2 < \dim A < 5/2$, Theorem~\ref{mainthm} improves and generalises the previous best known lower bound from \cite{harris20}, which was also specific to non-great circles. 

Denote the $s$-dimensional Hausdorff measure in Euclidean space by $\mathcal{H}^s$. In $\mathbb{R}^2$ the classical Marstrand projection theorem \cite{marstrand2} states that if $A \subseteq \mathbb{R}^2$ is a Borel set and $P_e$ denotes orthogonal projection onto the 1-dimensional subspace through $e \in S^1$, then for $\dim A \leq 1$, 
\[ \dim P_e (A) = \dim A, \qquad \text{$\mathcal{H}^1$-a.e.~} e \in S^1, \]
and for $\dim A >1$, 
\[ \mathcal{H}^1(P_e (A)) >0, \qquad \text{$\mathcal{H}^1$-a.e.~} e \in S^1. \]
This was generalised to higher dimensions by Mattila. In $\mathbb{R}^3$ there are two versions of the Martrand-Mattila projection theorem; one for lines and one for planes. The version for lines is analogous to the above with $S^1$ replaced by $S^2$. For planes, it states that if $A \subseteq \mathbb{R}^3$ is a Borel set, then if $\dim A \leq 2$,
\begin{equation} \label{marstrand} \dim \pi_v (A) = \dim A, \qquad \text{$\mathcal{H}^2$-a.e.~$v \in S^2$,} \end{equation}
and if $\dim A >2$, then 
\[ \mathcal{H}^2(\pi_v(A))>0, \qquad \text{$\mathcal{H}^2$-a.e.~$v \in S^2$,} \]
where $\pi_v$ denotes projection onto $v^{\perp}$. Restricted projection families can be formed by constraining $v$ to move along a one-dimensional curve $\gamma: [a,b] \to S^2$, and the restricted projection problem asks whether \eqref{marstrand} still holds with a natural 1-dimensional measure replacing the surface measure $\mathcal{H}^2$ on $S^2$. 

Without the assumption that $\det(\gamma, \gamma', \gamma'')$ is nonvanishing, the equality 
\begin{equation} \label{ledrap} \dim \pi_{\theta} (A) = \dim A, \quad \text{a.e.~$ \theta \in [a,b]$,} \end{equation}
can only hold (in general) for $\dim A \leq 1$, and was proved in this range by Järvenpää, Järvenpää, Ledrappier, and Leikas~\cite{jarvenpaa1} using the energy method of Kaufman \cite{kaufman}. Considering the counterexample where $\gamma$ is a great circle contained in a plane $P$ and $A \subseteq P$ shows that some extra assumption on $\gamma$ is necessary for \eqref{ledrap} to hold in general for $1< \dim A \leq 2$. The following conjecture is due to Fässler and Orponen; $\rho_{\theta}$ denotes projection onto the 1-dimensional subspace through $\gamma(\theta)$.
\begin{conjecture}[{\cite[Conjecture~1.6]{fasslerorponen14}}] Let $\gamma: [a,b] \to S^2$ be a $C^3$ curve with $\det(\gamma, \gamma', \gamma'')$ nonvanishing, and let $A \subseteq \mathbb{R}^3$ be an analytic set. Then 
\[ \dim \rho_{\theta} (A) = \min\{ \dim A, 1\}, \quad \text{a.e.~$\theta \in [a,b]$,} \]
and
\[ \dim \pi_{\theta} (A) = \min\{ \dim A, 2\}, \quad \text{a.e.~$\theta \in [a,b]$.} \] \end{conjecture}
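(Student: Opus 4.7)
The plan is to reduce each equality to a sharp Fourier estimate via the standard potential-theoretic framework, and then attack these estimates with decoupling and small-cap theory adapted to the space curve $\gamma$. For each $s < \dim A$, Frostman's lemma (in its analytic-set form) provides a compactly supported Borel measure $\mu$ on $A$ with $\mu(B(x,r)) \lesssim r^s$. When $s \leq 1$ (resp.\ $s \leq 2$), the desired lower bound $\dim \rho_\theta(A) \geq s$ (resp.\ $\dim \pi_\theta(A) \geq s$) follows from the energy bound $\int_a^b I_s(\rho_\theta\mu)\, d\theta < \infty$ (resp.\ $\int_a^b I_s(\pi_\theta\mu)\, d\theta < \infty$); by Plancherel these reduce to weighted $L^2$ estimates for $\widehat{\mu}$ integrated along the $\gamma$-cone $\{t\gamma(\theta) : t \in \mathbb{R},\, \theta \in [a,b]\}$, respectively along the ruled two-surface $\Sigma = \bigcup_\theta \gamma(\theta)^\perp$. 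For $s > 1$ (resp.\ $s > 2$) I would instead aim for $\int_a^b \|\widehat{\rho_\theta\mu}\|_2^2\, d\theta < \infty$ (resp.\ the analogous $L^2$-density bound for $\pi_\theta$), which gives absolute continuity of the push-forward for a.e.\ $\theta$.

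For the lines projections, the hypothesis $\det(\gamma,\gamma',\gamma'') \neq 0$ forces the cone $\{t\gamma(\theta)\}$ to be a $C^2$ surface with nonvanishing Gaussian curvature away from the vertex. I would combine cone decoupling (Wolff and Bourgain--Demeter) with a broad/narrow induction on scales in the spirit of the recent work of Pramanik--Yang--Zahl and Gan--Guo--Wang, using a dyadic pigeonhole over frequency annuli $|\xi| \sim 2^j$, to establish the $L^2$ estimate for all $1 < s < 3/2$. In the low-dimensional range $s \leq 1$ the Kaufman-type energy argument of Järvenpää, Järvenpää, Ledrappier, and Leikas already suffices, so once the decoupling input is in place the lines part of the conjecture is essentially complete.

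For the planes projections I would dualise: the fibres of $\pi_\theta$ are lines in direction $\gamma(\theta)$, so the target $L^2$ estimate is controlled by incidences of tubes whose directions lie on $\gamma$. Theorem~\ref{mainthm} already handles $s \leq 3/2$; to extend to $s < 2$ the key object is the two-dimensional ruled surface $\Sigma$, which inherits a nondegenerate second fundamental form from the torsion hypothesis. I would seek a small-cap decoupling for $\Sigma$ in the sense of Guth--Maldague, combined with a Wolff-type $X$-ray estimate or a sum-product input at small scales, to sharpen the exponent in the weighted $L^2$ bound. For $s \geq 2$, absolute continuity would follow from pushing the same small-cap machinery into the $L^2$-density regime.

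The main obstacle is the planes case in the range $3/2 < s < 2$: there the simple $L^2$ gain coming from the curvature of the $\gamma$-cone saturates at $s/2 + 3/4$, as witnessed by Theorem~\ref{mainthm}, so genuine small-cap input for $\Sigma$ appears to be unavoidable. Even granting an optimal small-cap decoupling, one still has to run a careful induction on scales, separating transverse incidences of tubes from nearly-coplanar ones, and the coplanar case in particular seems to require a new incidence-geometric or Kakeya-style input specific to curves in $S^2$. This is the step where I expect the deepest new ideas will be needed, and where decoupling alone is unlikely to close the gap.
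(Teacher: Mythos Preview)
This statement is a \emph{conjecture}, not a theorem; the paper does not prove it and contains no proof to compare your proposal against. What the paper actually establishes is the partial result Theorem~\ref{mainthm}, namely the lower bound $\dim\pi_\theta(A)\ge\min\{2,\dim A,\dim A/2+3/4\}$ for almost every $\theta$, which in particular settles the planes half of the conjecture only in the range $\dim A\le 3/2$. The lines half is not addressed at all in this paper.

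Your proposal is therefore not a proof but a research outline, and you yourself say as much in the final paragraph: the range $3/2<s<2$ for the planes projections is precisely where the $L^2$/decoupling machinery saturates at the exponent $s/2+3/4$ appearing in Theorem~\ref{mainthm}, and you correctly identify that closing this gap would require a genuinely new incidence-geometric or small-cap input that is not supplied here. The same caveat applies to your treatment of the lines case: invoking ``decoupling plus broad/narrow in the spirit of Pramanik--Yang--Zahl and Gan--Guo--Wang'' is a pointer to subsequent literature rather than an argument, and those works postdate the present paper. So the honest summary is that your proposal accurately diagnoses why the conjecture is hard and where the obstruction lies, but it does not constitute a proof, and there is no proof in the paper for it to be compared with.
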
 
\begin{sloppypar} For projections onto planes, progress was made by Fässler-Orponen \cite{fasslerorponen14}, Oberlin-Oberlin~\cite{oberlin}, Orponen~\cite{orponen}, Orponen-Venieri~\cite{venieri}, and by the author in \cite{harris20}. The condition $\dim A>5/2$ remains the best known sufficient condition that ensures $\mathcal{H}^2(\pi_{\theta}(A)) > 0$ for a.e.~$\theta \in [a,b]$; this is due to Oberlin-Oberlin~\cite{oberlin}. A comparison with some of the previous bounds is shown in Figure~\ref{picture}. \end{sloppypar}

\begin{figure} 
\begin{tikzpicture}[
declare function={
	  func(\x)= (\x<=1.5) * (\x)   +
              and(\x>1.5, \x<2.25) * (\x*0.444444+ 0.833333) +
							and(\x>=2.25, \x<2.5) * (\x*0.666666+ 0.333333);
		func2(\x)= (\x<=1.5) * (\x)   +
              and(\x>1.5, \x<=3) * (1+\x*0.333333);
		func3(\x)= (\x<=2) * (\x)   +
              and(\x>2, \x<=3) * (2);
		func4(\x)= (\x<=2) * (0.75*\x)   +
              and(\x>2, \x<=2.5) * (\x-0.5)		+
							and(\x>2.5, \x<=3) * (2);
		func5(\x)= (\x<=1.5) * (\x)   +
              and(\x>1.5, \x<=2.5) * (\x*0.5 + 0.75)		+
							and(\x>2.5, \x<=3) * (2);
  }]
  \begin{axis}[ 
	grid=major,,
	xtick distance=0.25,
    xlabel=$\dim A$,
    ylabel={$\dim \pi_{\theta} (A)$},
		legend style={
legend pos=outer north east,
}
  ] 
 \addplot[densely dotted,thick,domain=2.25:2.5]{func4(x)}; 
 \addplot[dashed,thick,domain=1:2.25]{func2(x)}; 
 \addplot[black,thick,domain=1.5:2.5]{func(x)}; 
 \addplot[red,thick,opacity = 0.4, domain=1:2.5]{func5(x)};
 \addplot[black,thick,opacity = 0.3, domain=1.5:2.5]{func3(x)};
\legend{Oberlin-Oberlin, Orponen-Venieri, \cite{harris20}, Theorem~\ref{mainthm}, Conjecture}
  \end{axis}
\end{tikzpicture}
\caption{The current and some of the previous a.e.~lower bounds for $\dim \pi_{\theta}(A)$, in  the range $\dim A \in (1,5/2)$. The Orponen-Venieri theorem and the bound from \cite{harris20} were for the special case of non-great circles.}
\label{picture}
\end{figure}
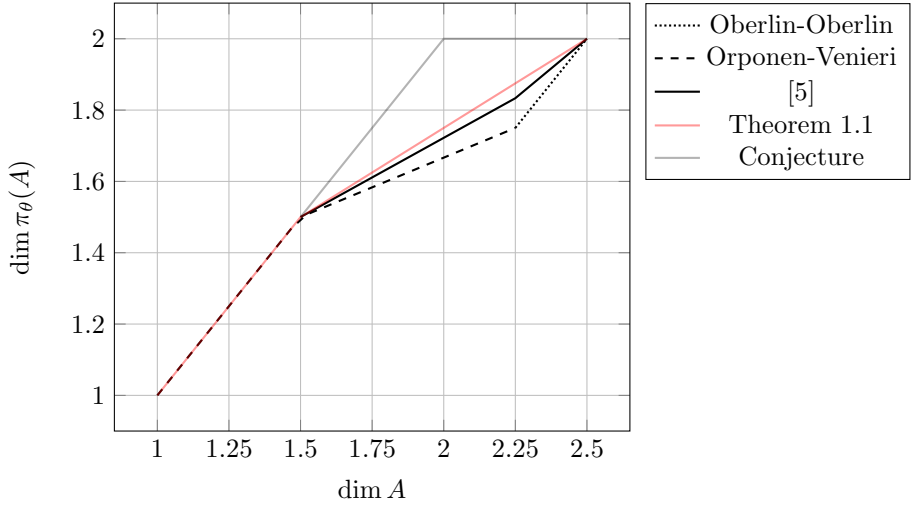

The improvement over \cite{harris20} in Theorem~\ref{mainthm} stems from Definition~\ref{alpha0defn}, the application of which is the main novelty of this work. Definition~\ref{alpha0defn} reformulates the projection problem as an averaged inequality over collections of ``bad'' tubes. The proof of Theorem~\ref{mainthm} follows a similar approach to \cite{harris20} (which used ideas from \cite{oberlin,venieri,GIOW,liu}), and proceeds by splitting an integral into a ``good'' and ``bad'' part. A self-similarity in the proof allows the ``bad'' part to be bounded by re-using Definition~\ref{alpha0defn}, which circumvents the appeal to the Orponen-Venieri lemma (\cite[Lemma 2.3]{venieri}). Since the use of this lemma was the only step in \cite{harris20} specific to non-great circles, this allows the proof to be generalised to curves in $S^2$ of nonvanishing geodesic curvature. The bound on the ``good'' part uses the decoupling theorem for the cone in $\mathbb{R}^3$, from \cite{bourgaindemeter}. 

Section~\ref{mainthmproof} contains the proof of Theorem~\ref{mainthm}. Section~\ref{rsi} is a proof of the refined Strichartz inequality, needed in Section~\ref{mainthmproof}. Section~\ref{wavepacket} contains a derivation of the wave packet decomposition needed in Section~\ref{mainthmproof}, and is independent of Section~\ref{rsi}. Section~\ref{discussion} is a discussion of some related problems. 

\subsection*{Acknowledgements} Alex Barron helped find the counterexample at the end of Section~\ref{discussion}. I also thank Shaoming Guo for some comments on an earlier version of this article.
\section{Refined Strichartz inequality} \label{rsi}

\begin{definition} Given a $C^3$ curve $\gamma: [a,b] \to S^2$ with $\det(\gamma, \gamma', \gamma'')$ nonvanishing, for each $R \geq 1$ let 
\[ \Theta_R = \left\{ \frac{j}{10^5B^{10} R^{1/2}}  : j \in \mathbb{Z} \right\} \cap [a,b], \]
where $B \geq 1$ is the smallest constant such that
\[ \left\lvert \det\left( \gamma, \gamma', \gamma'' \right) \right\rvert \geq B^{-1} , \]
and 
\[\left\lVert \gamma\right \rVert_{C^3[a,b]} \leq B. \]
For each $\theta \in \Theta_R$, let 
\begin{multline*} \tau(\theta) =\bigg\{ x_1 \gamma(\theta) + x_2 \frac{\gamma'(\theta )}{\left\lvert \gamma'(\theta ) \right\rvert} + x_3 \frac{(\gamma \times \gamma' )(\theta )}{\left\lvert (\gamma \times \gamma' )(\theta )\right\rvert} : \\
 1 \leq x_1 \leq 2, \, \left\lvert x_2\right\rvert \leq R^{-1/2}, \, \left\lvert x_3\right\rvert \leq R^{-1}  \bigg\}. \end{multline*}
Let
\[ \mathcal{P}_{R^{-1}} \left( \Gamma(\gamma) \right)  = \left\{ \tau(\theta): \theta\in \Theta_R \right\}. \]
Given $\delta>0$ and $\tau \in \mathcal{P}_{R^{-1} }\left( \Gamma(\gamma) \right)$, let 
\begin{multline*} T_{\tau,0} = \bigg\{ x_1\frac{\left(\gamma \times \gamma' \right)(\theta_{\tau} )}{\left\lvert \left(\gamma \times \gamma' \right)(\theta_{\tau} )\right\rvert}+ x_2 \frac{\gamma'(\theta_{\tau} )}{\left\lvert \gamma'(\theta_{\tau} ) \right\rvert} + x_3 \gamma(\theta_{\tau} ) : \\
\left\lvert x_1\right\rvert \leq R^{1+\delta}, \, \left\lvert x_2\right\rvert \leq R^{1/2+\delta}, \, \left\lvert x_3\right\rvert \leq R^{1/2+\delta}  \bigg\}. \end{multline*} \end{definition} 
	
 \begin{theorem} \label{refinedstrichartz} Let $\gamma: [a,b] \to S^2$ be a $C^3$ curve with $\det(\gamma, \gamma', \gamma'')$ nonvanishing. Let $B \geq 1$ be such that 
\begin{equation} \label{gammacdn1} \left\lvert \det\left( \gamma, \gamma', \gamma'' \right) \right\rvert \geq B^{-1} , \end{equation}
and 
\begin{equation} \label{gammacdn2} \left\lVert \gamma\right \rVert_{C^3[a,b]} \leq B. \end{equation}
Then for any $A \geq 1$ and $\epsilon>0$, there exists $\delta_0 >0$ such that the following holds for all $0 < \delta < \delta_0$. Let $R \geq 1$ and suppose that
\[ f = \sum_{T \in \mathbb{W}} f_T,  \]
where 
\[ \mathbb{W} \subseteq  \bigcup_{\tau \in \mathcal{P}_{R^{-1}}\left( \Gamma(\gamma ) \right)}  \mathbb{T}_{\tau}, \]
and each $\mathbb{T}_{\tau}$ is an $A$-overlapping set of translates of $T_{\tau,0}$ intersecting $B(0,R)$. Assume that for all $T \in \mathbb{W}$,
\begin{equation} \label{microlocalised}  \left\lVert f_T \right\rVert_{L^{\infty} \left( B(0,R) \setminus T \right) }+\sup_{q \in [1,2]}\left\lVert \widehat{f_T} \right\rVert_{L^q\left(\mathbb{R}^3 \setminus \tau(T) \right)} \leq A R^{-10000} \left\lVert f_T \right\rVert_2, \end{equation}
with $\left\lVert f_T\right\rVert_2$ constant over $T \in \mathbb{W}$ up to a factor of 2. Let $Y$ be a disjoint union of $R^{1/2}$-balls in $B(0,R)$, each of which intersects at most $M$ sets $2T$ with $T \in \mathbb{W}$. Then for $2 \leq p \leq 6$, 
\[ \left\lVert f \right\rVert_{L^p(Y)} \leq C_{\epsilon,\delta,A} B^{10^{10}/\epsilon} R^{\epsilon} \left( \frac{MR^{-3/2}}{\left\lvert \mathbb{W} \right\rvert } \right)^{\frac{1}{2} - \frac{1}{p} } \left( \sum_{T \in \mathbb{W}} \left\lVert f_T \right\rVert_2^2 \right)^{1/2}. \] \end{theorem}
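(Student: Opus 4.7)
The plan is to implement the refined-decoupling framework of Guth-Iosevich-Ou-Wang \cite{GIOW}, as adapted to cone surfaces by Liu \cite{liu}, with the $\gamma$-cone $\{t\gamma(\theta) : t \in [1,2],\, \theta \in [a,b]\}$ playing the role of the standard light cone. The fundamental tool is the Bourgain-Demeter $\ell^2$-decoupling theorem for the cone \cite{bourgaindemeter}, which applies here because the hypothesis $\det(\gamma,\gamma',\gamma'') \neq 0$ guarantees nondegenerate principal curvature of the $\gamma$-cone in the direction transverse to the rays. The explicit $B^{10^{10}/\epsilon}$ dependence should emerge from tracking constants through the $O(1/\epsilon)$ induction-on-scales steps in the Bourgain-Demeter proof, each of which contributes at most $B^{O(1)}$ from the affine normalisation adapted to $\gamma$; the generous exponent $10^{10}$ leaves ample room.

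First I would pigeonhole to reduce to a single dyadic value of $\|f_T\|_2$ across $T \in \mathbb{W}$ (losing only logarithms absorbed into $R^\epsilon$), and smoothly truncate each $f_T$ to have genuine Fourier support in a slight thickening of $\tau(T)$, with the $R^{-10000}$ tail in \eqref{microlocalised} contributing only a negligible error. Then, partitioning $Y$ into its constituent $R^{1/2}$-balls $B$, I apply Bourgain-Demeter cone decoupling at the Heisenberg-natural scale $R^{-1/2}$ on each such $B$:
\[
\|f\|_{L^p(B)} \leq C_\epsilon B^{O(1/\epsilon)} R^\epsilon \Bigl(\sum_\sigma \|f_\sigma\|_{L^p(w_B)}^2\Bigr)^{1/2}, \qquad 2 \leq p \leq 6,
\]
where $\sigma$ ranges over $R^{-1/2}$-caps of the $\gamma$-cone (each the union of $\sim R^{1/4}$ of the given $R^{-1}$-caps $\tau$), $f_\sigma = \sum_{T : \tau(T) \subset \sigma} f_T$, and $w_B$ is a smooth bump adapted to $B$.

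To estimate each $\|f_\sigma\|_{L^p(w_B)}$, I use the locally constant property: each wave packet satisfies $|f_T| \approx c_T \chi_T$ with $c_T \sim \|f_T\|_2 |T|^{-1/2}$, and on $B$ the wave packets contributing to $f_\sigma$ are essentially $L^2$-orthogonal (different $\tau$'s have disjoint Fourier support, and translates with equal $\tau$ are nearly spatially disjoint by the $A$-overlap hypothesis). Interpolation between the resulting $L^2$-bound $\|f_\sigma\|_{L^2(B)}^2 \lesssim N_{\sigma,B}\, c_T^2 |B|$ and the crude $L^\infty$-bound $\|f_\sigma\|_{L^\infty(B)} \lesssim N_{\sigma,B}\, c_T$ yields $\|f_\sigma\|_{L^p(w_B)} \lesssim N_{\sigma,B}^{1-1/p} c_T |B|^{1/p}$, where $N_{\sigma,B} = \#\{T : \tau(T)\subset\sigma,\, T \cap B \neq \emptyset\}$. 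Inserting into the decoupling and using $\sum_\sigma N_{\sigma,B} = M_B \leq M$ produces a per-ball estimate for $\|f\|_{L^p(B)}$.

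Raising this estimate to the $p$-th power and summing over the $R^{1/2}$-balls $B \subset Y$ uses two combinatorial inputs: the pointwise bound $M_B \leq M$ and the dual count $\sum_B M_B \leq |\mathbb{W}|\cdot (|T|/|B|) = |\mathbb{W}| R^{1/2}$ arising from the fact that each tube meets at most $\sim R^{1/2}$ balls. Combining these via Hölder (writing $M_B^\alpha \leq M^{\alpha - 1} M_B$ for the appropriate $\alpha$), and then substituting $c_T = R^{-1}\|f_T\|_2$, $|B| = R^{3/2}$, and $|\mathbb{W}|^{1/2} \|f_T\|_2 = (\sum_T \|f_T\|_2^2)^{1/2}$, yields the claimed inequality. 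The main obstacle I anticipate is the first step: extracting the quantitative $B^{O(1/\epsilon)}$ constant from the Bourgain-Demeter proof, rather than an unquantified $C_\gamma$. This requires re-running their induction on scales with explicit control on the affine normalisations at each stage using \eqref{gammacdn1}--\eqref{gammacdn2}, which is expected to be routine but bookkeeping-heavy; the locally constant/orthogonality estimate and the final combinatorial summation are comparatively standard.
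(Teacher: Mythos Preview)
Your approach differs substantially from the paper's: you propose a one-shot argument (decouple on each $R^{1/2}$-ball into caps $\sigma$ of angular width $R^{-1/4}$, bound each $\|f_\sigma\|_{L^p(w_B)}$ by $L^2$--$L^\infty$ interpolation, then sum over balls), whereas the paper proves the theorem by induction on scales with Lorentz rescaling, exactly as in \cite{GIOW} for the paraboloid. Your final numerology does not give the claimed inequality. From $\|f_\sigma\|_{L^p(w_B)} \lesssim N_{\sigma,B}^{1-1/p} c_T |B|^{1/p}$ and $\sum_\sigma N_{\sigma,B}^{2-2/p} \leq M_B^{2-2/p}$ you get $\|f\|_{L^p(B)}^p \lesssim R^{\epsilon p} M_B^{p-1} c_T^p |B|$; summing over $B$ with $\sum_B M_B^{p-1} \leq M^{p-2}\,|\mathbb{W}|\,R^{1/2}$ and substituting $c_T \sim R^{-1}\|f_T\|_2$ gives a bound proportional to $M^{1-2/p} R^{2/p-1}|\mathbb{W}|^{1/p-1/2}$, not the stated $M^{1/2-1/p} R^{3/(2p)-3/4}|\mathbb{W}|^{1/p-1/2}$. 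The two differ by the factor $(M R^{-1/2})^{1/2-1/p}$, so your claim that the substitutions ``yield the claimed inequality'' is false as written. It is true that $M \lesssim A R^{1/2}$ always holds here (there are $\sim R^{1/2}$ caps $\tau$, and at most $O(A)$ tubes of each cap meet a given $R^{1/2}$-ball), so your bound does imply the theorem once this extra observation is supplied, but you do not mention it.

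The underlying reason for the discrepancy is that your $L^\infty$ bound $\|f_\sigma\|_{L^\infty(B)} \lesssim N_{\sigma,B}\, c_T$ is the crude triangle inequality and wastes all the cancellation among the $\sim R^{1/4}$ distinct frequencies $\tau \subset \sigma$; the interpolation is therefore lossy by a factor $N_{\sigma,B}^{1/2-1/p}$. The paper's proof avoids this loss by never interpolating: it decouples at the intermediate angular scale $R^{-1/4}$ into boxes $\Box$, pigeonholes several auxiliary parameters $(\mu,\nu,M',M'')$, then Lorentz-rescales each $\Box$ so that the restricted problem inside $\Box$ becomes an instance of the full theorem at scale $R^{1/2}$, to which the inductive hypothesis applies. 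The crucial combinatorial step is the inequality $M'M'' \lesssim \mu M$ relating multiplicities at the two scales, which is what forces the exponent on $M$ to be $\tfrac{1}{2}-\tfrac{1}{p}$ rather than your $1-\tfrac{2}{p}$.
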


\begin{proof} Assume that $[a,b] = [-1,1]$. Fix $A \geq 1$, $\epsilon \in (0,1/2)$, $\delta_0 = \epsilon^{100}$, $\delta \in (0, \delta_0)$, 
\[ R \geq \min\left\{B^{10^3/\epsilon}, 2^{10^5/\epsilon}\right\}, \]
and assume inductively that the theorem holds with $[a,b]=[-1,1]$ for all $\widetilde{R} \leq R^{3/4}$, for all curves $\gamma$ satisfying \eqref{gammacdn1} and \eqref{gammacdn2}, and for all $B \geq 1$. 

For each $\tau \in \mathcal{P}_{R^{-1}}(\Gamma(\gamma))$, let $\kappa = \kappa(\tau) \in \mathcal{P}_{R^{-1/2} }(\Gamma(\gamma))$ be the element of $\mathcal{P}_{R^{-1/2} }(\Gamma(\gamma))$ which minimises $\left\lvert \theta_{\tau} - \theta_{\kappa} \right\rvert$. For each $\kappa$, let 
\begin{multline*} \Box_{\kappa, 0} = \Big\{ x_1 \frac{\left(\gamma \times \gamma' \right)(\theta_{\kappa} )}{\left\lvert \left(\gamma \times \gamma' \right)(\theta_{\kappa} ) \right\rvert }+ x_2 \frac{\gamma'(\theta_{\kappa} )}{\left\lvert \gamma'(\theta_{\kappa} ) \right\rvert} + x_3 \gamma(\theta_{\kappa} ) : \\
\left\lvert x_1\right\rvert \leq R^{1+\delta}, \, \left\lvert x_2\right\rvert \leq R^{3/4+\delta}, \, \left\lvert x_3\right\rvert \leq R^{1/2+\delta}  \Big\}, \end{multline*}
and
\begin{multline*} \mathbb{P}_{\kappa} = \Big\{ \Box = a \gamma(\theta_{\kappa} )+  b \frac{\gamma'(\theta_{\kappa} )}{\left\lvert \gamma'(\theta_{\kappa} ) \right\rvert} +\Box_{\kappa, 0} :  \\
  a\in (1/10) R^{1/2+\delta} \mathbb{Z}, \, b \in (1/10) R^{3/4+\delta} \mathbb{Z} \Big\}.  \end{multline*} 
Let $\mathbb{P} = \bigcup_{\kappa \in \mathcal{P}_{R^{-1/2} }(\Gamma(\gamma))} \mathbb{P}_{\kappa}$. Given any $\tau$ and corresponding $\kappa = \kappa(\tau)$, 
\begin{equation} \label{angledistortion} \left\lvert \left\langle (\gamma \times \gamma')(\theta_{\tau}), \gamma'(\theta_{\kappa}) \right\rangle \right\rvert \leq B^{-7} R^{-1/4}, \end{equation}
and 
\begin{equation} \label{angledistortion2} \left\lvert \left\langle (\gamma \times \gamma')(\theta_{\tau}), \gamma(\theta_{\kappa}) \right\rangle \right\rvert \leq B^{-7}R^{-1/2}. \end{equation}
 It follows that for each $T \in \mathbb{T}_{\tau}$, there are $\sim 1$ sets $\Box \in \mathbb{P}_{\kappa(\tau)}$ with $T \cap 10\Box \neq \emptyset$, and moreover $T \subseteq 100\Box$ whenever $T \cap 10\Box \neq \emptyset$. For each such $T$ let $\Box = \Box(T)\in \mathbb{P}_{\kappa}$ be some choice such that $T \cap 10\Box \neq \emptyset$. For each $\tau$, let 
\begin{multline*} S_{\tau,0} = \bigg\{ x_1\frac{\left(\gamma \times \gamma' \right)(\theta_{\tau} )}{\left\lvert \left(\gamma \times \gamma' \right)(\theta_{\tau} )\right\rvert}+ x_2 \frac{\gamma'(\theta_{\tau} )}{\left\lvert \gamma'(\theta_{\tau} ) \right\rvert} + x_3 \gamma(\theta_{\tau} ) : \\
\left\lvert x_1\right\rvert \leq R^{1+\delta/2}, \, \left\lvert x_2\right\rvert \leq R^{1/2+\delta/2}, \, \left\lvert x_3\right\rvert \leq R^{1/4+\delta/2}  \bigg\}, \end{multline*}
and 
\begin{multline*} \mathbb{S}_{\tau} = \bigg\{ S = a \gamma(\theta_{\tau} )+  b \frac{\gamma'(\theta_{\tau} )}{\left\lvert \gamma'(\theta_{\tau} )\right\rvert} +S_{\tau,0}:  \\
   a\in (1/10) R^{1/4+\delta/2} \mathbb{Z}, \, b \in (1/10) R^{1/2+\delta/2} \mathbb{Z} \bigg\}. \end{multline*} 
For each $T \in \mathbb{W}$ and $S \in \mathbb{S}_{\tau(T)}$, let  
\[ f_S = f_{S,T} := \eta_S f_T, \]
where $\{\eta_S\}_{S \in \mathbb{S}_{\tau}}$ is a smooth partition of unity, such that 
\[ \sup_{q \in [1, \infty]} \left\lVert \eta_S\right\rVert_{L^q(100\Box \setminus 0.99S ) }  \lesssim R^{-10^6}, \]
where the implicit constant is absolute, with $\widehat{\eta_S}$ supported in 
\begin{multline*} \bigg\{ x_1 \gamma(\theta_{\tau} )+ x_2 \frac{\gamma'(\theta_{\tau} )}{\left\lvert \gamma'(\theta_{\tau} ) \right\rvert} + x_3 \frac{\left(\gamma \times \gamma' \right)(\theta_{\tau} )}{\left\lvert \left(\gamma \times \gamma' \right)(\theta_{\tau} )\right\rvert} : \\ 
\left\lvert x_1\right\rvert \leq 10^{-3}, \, \left\lvert x_2\right\rvert \leq 10^{-3}R^{-1/2}, \, \left\lvert x_3\right\rvert \leq 10^{-3}R^{-1}  \bigg\}. \end{multline*}
This partition can be constructed using the Poisson summation formula. By dyadic pigeonholing and by \eqref{microlocalised}, there are dyadic numbers $\mu$ and $\nu$ such that
\begin{equation} \label{cmu} \left\lVert f\right\rVert_{L^p(Y)} \lesssim \left(\log R\right)^2 \left\lVert \sum_{\Box \in \mathbb{P}} \sum_{(S,T) \in \mathbb{W}_{\Box}} f_{S,T} \right\rVert_{L^p(Y)}   + R^{-1000} \left( \sum_{T \in \mathbb{W} } \left\lVert f_T \right\rVert_2^2 \right)^{1/2}, \end{equation}
where, for each $\Box$, $\mathbb{W}_{\Box}$ is a subset of 
\[ \left\{ (S,T)  :  T \in \mathbb{W}, \quad \Box = \Box(T), \quad S \in \mathbb{S}_{\tau(T)  }, \quad  \left\lVert \eta_S f_T\right\rVert_2 \in [ \nu,2\nu)\right\}, \]
such that for any $(S_0, T_0) \in  \mathbb{W}_{\Box}$, 
\[ \left\lvert \left\{ (S,T_0)  \in \mathbb{W}_{\Box} \right\} \right\rvert \in [\mu, 2\mu ), \]
and with the property that 
\[ (S,T) \in \mathbb{W}_{\Box} \Rightarrow S \cap T \neq \emptyset. \]
The dyadic range of $\nu$ was constrained; relying on the tail term in \eqref{cmu} to handle the contribution from those $f_{S,T}$ with $\lVert f_{S,T}\rVert_2\leq R^{-2000} \lVert f_T \rVert_2$. Hence
\[ \left\lVert f_{S,T} \right\rVert_{L^{\infty} \left( \mathbb{R}^3 \setminus 0.99S \right) }+\sup_{q \in [1,2]}\left\lVert \widehat{f_{S,T}} \right\rVert_{L^q\left(\mathbb{R}^3 \setminus 1.1\tau(T) \right)} \lesssim AR^{-7000} \left\lVert f_{S,T} \right\rVert_2, \]
for all $\Box$ and $(S,T) \in \mathbb{W}_{\Box}$, where the implicit constant is absolute.  

For each $\kappa$ and $\Box \in \mathbb{P}_{\kappa}$, let $\left\{Q_{\Box} \right\}_{Q_{\Box}}$ be a finitely overlapping cover of $100\Box$ by translates of the ellipsoid
\begin{multline*} \Bigg\{ x_1 \gamma(\theta_{\kappa} ) + x_2 \frac{\gamma'(\theta_{\kappa} )}{\left\lvert \gamma'(\theta_{\kappa} )\right\rvert} + x_3 \frac{\left( \gamma \times \gamma' \right)(\theta_{\kappa} )}{\left\lvert \left( \gamma \times \gamma' \right)(\theta_{\kappa} )\right\rvert}: \\
 \left( \left\lvert x_1\right\rvert ^2 + \left(\left\lvert x_2\right\rvert R^{-1/4}\right)^2 + \left(\left\lvert x_3 \right\rvert R^{-1/2} \right)^2 \right)^{1/2} \leq  R^{1/4+\delta/2} \Bigg\}. \end{multline*} 
Using Poisson summation again, let $\{ \eta_{Q_{\Box}} \}_{Q_{\Box} \in \mathcal{Q}_{\Box}}$ be a smooth partition of unity such that on $10^3 \Box$,
\[ \sum_{Q_{\Box} \in \mathcal{Q}_{\Box}} \eta_{Q_{\Box}} =1, \]
and such that each $\eta_{Q_{\Box}}$ satisfies 
\[ \left\lVert \eta_{Q_{\Box}}\right\rVert_{\infty} \lesssim 1, \quad  \left\lVert \eta_{Q_{\Box}}\right\rVert_{L^{\infty}(\mathbb{R}^3 \setminus Q_{\Box} ) } \lesssim R^{-10000}, \]
and 
\[  \left\lvert \eta_{Q_{\Box} }(x) \right\rvert \lesssim  \dist(x, Q_{\Box})^{-10000} \quad \forall \, x \in \mathbb{R}^3,  \]
with $\widehat{\eta_{Q_{\Box}}}$ supported in 
\begin{multline*} \bigg\{ \xi_1 \gamma(\theta_{\kappa} ) + \xi_2 \frac{\gamma'(\theta_{\kappa} )}{\left\lvert \gamma'(\theta_{\kappa} )\right\rvert} + \xi_3 \frac{\left( \gamma \times \gamma' \right)(\theta_{\kappa} )}{\left\lvert \left( \gamma \times \gamma' \right)(\theta_{\kappa} )\right\rvert} \\
:  \left\lvert \xi_1\right\rvert \leq R^{-1/4}, \quad \left\lvert \xi_2\right\rvert \leq R^{-1/2}, \quad \left\lvert \xi_3\right\rvert \leq R^{-3/4} \bigg\}. \end{multline*}
  By dyadic pigeonholing,  
\begin{multline*} \left\lVert \sum_{\Box} \sum_{(S,T) \in \mathbb{W}_{\Box}} f_{S,T} \right\rVert_{L^p(Y)} \lesssim \log R  \left\lVert \sum_{\Box} \sum_{(S,T) \in \mathbb{W}_{\Box}} \eta_{Y_{\Box}} f_{S,T} \right\rVert_{L^p(Y)} \\
+ AR^{-1000} \left( \sum_{T \in \mathbb{W} } \left\lVert f_T \right\rVert_2^2 \right)^{1/2}, \end{multline*}
where, for each $\Box$, $Y_{\Box}$ is a union over a subset of the sets $Q_{\Box}$, and $\eta_{Y_{\Box}}$ is the corresponding sum over $\eta_{Q_{\Box}}$, such that each $Q_{\Box} \subseteq Y_{\Box}$ intersects a number $\# \in [M'(\Box), 2M'(\Box))$ different sets $3S$ with $(S,T) \in \mathbb{W}_{\Box}$, up to a factor of 2. By pigeonholing again,
\[ \left\lVert \sum_{\Box} \sum_{(S,T) \in \mathbb{W}_{\Box}} \eta_{Y_{\Box}} f_{S,T} \right\rVert_{L^p(Y)} \lesssim (\log R )^2 \left\lVert \sum_{\Box \in \mathbb{B}} \sum_{(S,T) \in \mathbb{W}_{\Box}} \eta_{Y_{\Box}} f_{S,T} \right\rVert_{L^p(Y)}, \]
where $\left\lvert \mathbb{W}_{\Box} \right\rvert$ and $M'=M'(\Box)$ are constant over $\Box \in \mathbb{B}$ up to a factor of 2. By one final pigeonholing step,
\[ \left\lVert \sum_{\Box \in \mathbb{B}} \sum_{(S,T) \in \mathbb{W}_{\Box}} \eta_{Y_{\Box}} f_{S,T} \right\rVert_{L^p(Y)} \lesssim \log R \left\lVert \sum_{\Box \in \mathbb{B}} \sum_{(S,T) \in \mathbb{W}_{\Box}} \eta_{Y_{\Box}} f_{S,T} \right\rVert_{L^p(Y')}, \] 
where $Y'$ is a union over $R^{1/2}$-balls $Q \subseteq Y$ such that each ball $2Q$ intersects a number $\# \in [M'', 2M'')$ of the sets $Y_{\Box}$ in a set of strictly positive Lebesgue measure, as $\Box$ varies over $\mathbb{B}$. Fix $Q \subseteq Y'$. By the decoupling theorem for generalised $C^3$ cones (see \cite[Exercise 12.5]{demeter}), followed by Hölder's inequality,
\begin{multline*} \left\lVert \sum_{\Box \in \mathbb{B}} \sum_{(S,T) \in \mathbb{W}_{\Box}} \eta_{Y_{\Box}} f_{S,T} \right\rVert_{L^p(Q)} \\
\leq C_{\epsilon} B^{100} R^{\epsilon/100} \left( M'' \right)^{\frac{1}{2} - \frac{1}{p}} \left(  \sum_{\Box \in \mathbb{B}} \left\lVert\sum_{(S,T) \in \mathbb{W}_{\Box}} \eta_{Y_{\Box}} f_{S,T} \right\rVert_{L^p(2Q)}^p \right)^{1/p} \\
+ R^{-900} \left( \sum_{T \in \mathbb{W}} \left\lVert f_T \right\rVert_2^2 \right)^{1/2}. \end{multline*}
Summing over $Q$ gives 
\begin{multline*} \left\lVert f\right\rVert_{L^p(Y)} \lesssim C_{\epsilon}\left( \log R \right)^{100} B^{100} R^{\epsilon/100} \left( M'' \right)^{\frac{1}{2} - \frac{1}{p}} \\
\times \left(  \sum_{\Box \in \mathbb{B}} \left\lVert\sum_{(S,T) \in \mathbb{W}_{\Box}} f_{S,T} \right\rVert_{L^p(Y_{\Box} ) }^p \right)^{1/p} + AR^{-800} \left( \sum_{T \in \mathbb{W}} \left\lVert f_T \right\rVert_2^2 \right)^{1/2}. \end{multline*} This will be bounded using the inductive assumption, following a Lorentz rescaling.

For each $\theta \in [-1,1]$, define the Lorentz rescaling map $L = L_{\theta}$ at $\theta$ by
\begin{multline*} L\left[x_1 \gamma(\theta) + x_2 \frac{\gamma'(\theta)}{\left\lvert \gamma'(\theta)\right\rvert}  + x_3 \frac{\left(\gamma \times \gamma' \right)(\theta)}{\left\lvert \left(\gamma \times \gamma' \right)(\theta)\right\rvert}\right] \\
=x_1 \gamma(\theta) + R^{1/4} x_2  \frac{\gamma'(\theta)}{\left\lvert \gamma'(\theta)\right\rvert}  + R^{1/2} x_3 \frac{\left(\gamma \times \gamma' \right)(\theta)}{\left\lvert \left(\gamma \times \gamma' \right)(\theta)\right\rvert}. \end{multline*}
Let 
\[ \widetilde{\gamma}(\phi) = \frac{ L(\gamma(\phi))}{\left\lvert L(\gamma(\phi) ) \right\rvert}, \qquad \phi \in [-1,1]. \]
Then for any $\phi \in [-1,1]$,
\[ \widetilde{\gamma}'(\phi) = \frac{\pi_{\widetilde{\gamma}(\phi)^{\perp} }\left( L(\gamma'(\phi) ) \right) }{\left\lvert L(\gamma(\phi) ) \right\rvert}, \]
and 
\[ \widetilde{\gamma}''(\phi) = \frac{\pi_{\widetilde{\gamma}(\phi)^{\perp} }\left( L(\gamma''(\phi) ) \right) }{\left\lvert L(\gamma(\phi) ) \right\rvert} - \frac{\left\langle L(\gamma(\phi)), L(\gamma'(\phi) ) \right\rangle \pi_{\widetilde{\gamma}(\phi)^{\perp} }\left( L(\gamma'(\phi) ) \right) }{\left\lvert L(\gamma(\phi) ) \right\rvert^3}. \]
Hence 
\begin{align*} \det( \widetilde{\gamma}, \widetilde{\gamma}',\widetilde{\gamma}'' ) &= \frac{1}{\left\lvert L \circ \gamma \right\rvert^3}  \det\left( L \circ \gamma, L \circ \gamma', L \circ \gamma''\right) \\
&= \frac{R^{3/4}}{\left\lvert L \circ \gamma \right\rvert^3}  \det\left( \gamma, \gamma', \gamma''\right). \end{align*}
Let $\varepsilon = (10^5 B^{10})^{-1}$, and for fixed $\theta \in [-1+\varepsilon,1-\varepsilon]$, let
\[ \widehat{\gamma}(\phi) = \widetilde{\gamma}(\theta +   R^{-1/4}\phi  ), \quad  \phi \in [-\varepsilon, \varepsilon]. \]
The assumption that $\left\lVert \gamma\right \rVert_{C^3[-1,1]} \leq B$ yields
\[ 1 \leq \left\lvert L(\gamma(\phi) ) \right\rvert \leq 1+ 10B \varepsilon, \quad \forall \, \phi \in [\theta - \varepsilon R^{-1/4}, \theta + \varepsilon R^{-1/4} ]. \]
Similarly, 
\[ \left\lvert L(\gamma'(\phi)) - L(\gamma'(\theta)) \right\rvert \leq 10\varepsilon B R^{1/4},  \quad \forall \, \phi \in [\theta - \varepsilon R^{-1/4}, \theta + \varepsilon R^{-1/4} ], \]
\[ \left\lvert L(\gamma''(\phi)) - L(\gamma''(\theta) ) \right\rvert \leq 10\varepsilon B R^{1/4},  \quad \forall \, \phi \in [\theta - \varepsilon R^{-1/4}, \theta + \varepsilon R^{-1/4} ],  \]
and 
\[\left\lvert L \gamma'''(\phi) \right\rvert \leq BR^{1/2}, \quad \forall \, \phi \in [\theta - \varepsilon R^{-1/4}, \theta + \varepsilon R^{-1/4} ]. \]
It follows that
\[ \left\lvert \det\left(\widehat{\gamma}, \widehat{\gamma}', \widehat{\gamma}''\right) \right\rvert \geq (2B)^{-1} \]
on $[-\varepsilon,\varepsilon]$, and that
\[ \left\lVert \widehat{\gamma} \right\rVert_{C^3[-\varepsilon,\varepsilon] } \leq 2B, \]
(the calculation for the third derivative is omitted).

For each $\Box \in \mathbb{B}$, given $(S,T) \in \mathbb{W}_{\Box}$, let $g_{S,T} =  f_{S,T} \circ L$, where $L= L_{\theta_{\kappa(\Box) } }$. Then
\begin{equation} \label{inductthis} \left\lVert\sum_{(S,T) \in \mathbb{W}_{\Box}} f_{S,T} \right\rVert_{L^p(Y_{\Box} ) } \leq R^{\frac{3}{4p}}\left\lVert\sum_{(S,T) \in \mathbb{W}_{\Box}} g_{S,T} \right\rVert_{L^p(L^{-1} Y_{\Box} ) }. \end{equation}
The inequalities \eqref{angledistortion} and \eqref{angledistortion2} imply that for each $(S,T) \in \mathbb{W}_{\Box}$, the set $L^{-1}(S)$ is a equivalent (up to a factor 1.01) to a box of length $R^{1/2+\delta/2}$ in its longest direction parallel to $L^{-1}(\gamma \times \gamma')(\theta_{\tau})$ and of length $R^{1/4+\delta/2}$ in its other two directions. The ellipsoids $Q_{\Box}$ are rescaled to $R^{1/4+\delta/2}$-balls $L^{-1}(Q_{\Box})$. Moreover, it will be shown that
\begin{multline} \label{scaledtau} L(\tau) \subseteq \bigg\{ x_1 \widetilde{\gamma}(\theta_{\tau} ) + x_2 \frac{\widetilde{\gamma}'(\theta_{\tau})}{\left\lvert \widetilde{\gamma}'(\theta_{\tau}) \right\rvert}  + x_3 \frac{ \left( \widetilde{\gamma}  \times \widetilde{\gamma}'\right)(\theta_{\tau})}{\left\lvert \left( \widetilde{\gamma}  \times \widetilde{\gamma}'\right)(\theta_{\tau})\right\rvert } \\
: 1 \leq x_1 \leq 2.01, \, \left\lvert x_2\right\rvert \leq (1.01)R^{-1/4}, \, \left\lvert x_3\right\rvert \leq R^{-1/2} \bigg\}. \end{multline} 
To prove this, let
\[ x =  x_1 \gamma(\theta_{\tau}) + x_2 \frac{\gamma'(\theta_{\tau})}{\left\lvert\gamma'(\theta_{\tau}) \right\rvert} + x_3 \frac{(\gamma \times \gamma')(\theta_{\tau})}{\left\lvert (\gamma \times \gamma')(\theta_{\tau})\right\rvert} \in \tau, \]
where 
\[ x_1 \in [1,2], \quad \left\lvert x_2\right\rvert \leq R^{-1/2}, \quad \left\lvert x_3\right\rvert \leq R^{-1}. \]
The vector $\left(\widetilde{\gamma} \times \widetilde{\gamma}'\right)(\theta_{\tau})$ is parallel to $L^{-1}((\gamma \times \gamma')(\theta_{\tau}))$, since $L^{-1}((\gamma \times \gamma')(\theta_{\tau}))$ is orthogonal to $\widetilde{\gamma}(\theta_{\tau})$ and $\widetilde{\gamma}'(\theta_{\tau})$. The inequality
\[ \left\lvert L^{-1}((\gamma \times \gamma')(\theta_{\tau})) \right\rvert \geq R^{-1/2}\left\lvert  (\gamma \times \gamma')(\theta_{\tau}) \right\rvert \]
gives
\begin{equation} \label{component1} \left\lvert \left\langle Lx, \frac{ L^{-1}((\gamma \times \gamma')(\theta_{\tau}) )}{\left\lvert  L^{-1}((\gamma \times \gamma')(\theta_{\tau}) ) \right\rvert }\right\rangle \right\rvert \leq R^{-1/2}. \end{equation}
Moreover, 
\begin{align} \notag &\left\lvert \left\langle Lx, \frac{ \pi_{L(\gamma(\theta_{\tau}))^{\perp}} \left( L\left(\gamma'(\theta_{\tau}) \right)\right)}{\left\lvert \pi_{L(\gamma(\theta_{\tau}))^{\perp}} \left( L\left(\gamma'(\theta_{\tau}) \right)\right) \right\rvert} \right\rangle \right\rvert \\
\notag & \quad = \left\lvert \left\langle x_2 L\left(\frac{\gamma'(\theta_{\tau})}{\left\lvert\gamma'(\theta_{\tau}) \right\rvert}\right) + x_3 L\left(\frac{(\gamma \times \gamma')(\theta_{\tau})}{\left\lvert (\gamma \times \gamma')(\theta_{\tau})\right\rvert}\right), \frac{ \pi_{L(\gamma(\theta_{\tau}))^{\perp}} \left( L\left(\gamma'(\theta_{\tau}) \right)\right)}{\left\lvert \pi_{L(\gamma(\theta_{\tau}))^{\perp}} \left( L\left(\gamma'(\theta_{\tau}) \right)\right) \right\rvert} \right\rangle \right\rvert \\
\label{component2} &\quad \leq (1.01)R^{-1/4}.
 \end{align} 
For the direction $L(\gamma(\theta_{\tau}))$, 
\begin{equation} \label{component3} \left\langle Lx, \frac{ L(\gamma(\theta_{\tau}) )}{\left\lvert L(\gamma(\theta_{\tau}) ) \right\rvert } \right\rangle = x_1\left\lvert L(\gamma(\theta_{\tau} ) ) \right\rvert + O(R^{-1/4}). \end{equation}
Combining \eqref{component1}, \eqref{component2} and \eqref{component3} gives \eqref{scaledtau}. 

Inductively applying the theorem at scale $R^{1/2}$ therefore gives
\begin{align*} \eqref{inductthis} &\lesssim C_{\epsilon,\delta,A} B^{10^{10}/\epsilon}R^{3\epsilon/4} R^{ 3/(4p)} \left( \frac{ M' R^{-3/4} }{\left\lvert \mathbb{W}_{\Box} \right\rvert } \right)^{\frac{1}{2} - \frac{1}{p} } \left( \sum_{(S,T) \in \mathbb{W}_{\Box} } \left\lVert g_{S,T} \right\rVert_2^2 \right)^{1/2} \\
&= C_{\epsilon,\delta,A} B^{10^{10}/\epsilon}R^{3\epsilon/4} \left( \frac{ M' R^{-3/2} }{\left\lvert \mathbb{W}_{\Box} \right\rvert } \right)^{\frac{1}{2} - \frac{1}{p} } \left( \sum_{(S,T) \in \mathbb{W}_{\Box} } \left\lVert f_{S,T} \right\rVert_2^2 \right)^{1/2}, \end{align*} 
for each $\Box \in \mathbb{B}$. Hence 
\begin{multline*} \left\lVert f\right\rVert_{L^p(Y)}  \\
\leq C_{\epsilon,\delta,A} B^{10^{10}/\epsilon} R^{4\epsilon/5} \left( \frac{ M'M'' R^{-3/2} }{\left\lvert \mathbb{W}_{\Box} \right\rvert } \right)^{\frac{1}{2} - \frac{1}{p} } \left(\sum_{\Box \in \mathbb{B}} \left( \sum_{(S,T) \in \mathbb{W}_{\Box} } \left\lVert f_{S,T} \right\rVert_2^2 \right)^{p/2}\right)^{1/p}. \end{multline*} 
Using $\left\lVert f_{S,T}\right\rVert_2^2 \lesssim \left\lVert f_T\right\rVert_2^2/\mu$, this is 
\[ \lesssim C_{\epsilon,\delta,A} B^{10^{10}/\epsilon} R^{4\epsilon/5} \left( \frac{ M'M'' R^{-3/2} }{\left\lvert \mathbb{W} \right\rvert \mu } \right)^{\frac{1}{2} - \frac{1}{p} } \left( \frac{ \left\lvert \mathbb{B} \right\rvert \left\lvert \mathbb{W}_{\Box} \right\rvert }{\left\lvert \mathbb{W}\right\rvert \mu } \right)^{\frac{1}{p} }\left( \sum_{T \in \mathbb{W} } \left\lVert f_T \right\rVert_2^2 \right)^{1/2}. \]
The second bracketed term is $\lesssim 1$, since
\[ \left\lvert \mathbb{W}  \right\rvert \mu = \sum_{T \in \mathbb{W}} \mu  \geq \sum_{\Box \in \mathbb{B}} \sum_{\substack{ T \in \mathbb{W}: \\
\Box = \Box(T)}} \mu \sim \sum_{\Box \in \mathbb{B}} \sum_{\substack{ T \in \mathbb{W}: \\
\Box = \Box(T)}} \sum_{ \substack{ (S,T') \in \mathbb{W}_{\Box}: \\ 
T'=T } } 1 = \left\lvert \mathbb{B} \right\rvert \left\lvert \mathbb{W}_{\Box} \right\rvert. \]
It remains to show that $M'M'' \lesssim \mu M$. Let $Q \subseteq Y'$ be any $R^{1/2}$-ball. By definition of $\mu$ and $M$,
\begin{align*} \mu M &\gtrsim \sum_{\substack{ T \in \mathbb{W}:  \\
2T \cap Q \neq \emptyset }}  \sum_{\substack{\Box \in \mathbb{B}: \\ 
\Box = \Box(T)}}  \sum_{\substack{(S,T') \in \mathbb{W}_{\Box} \\
T' = T }}1  \\
&= \sum_{\Box \in \mathbb{B}} \sum_{\substack{ T\in \mathbb{W}: \\
\Box = \Box(T) \\
2T \cap Q \neq \emptyset}}   \sum_{\substack{(S,T') \in \mathbb{W}_{\Box}: \\
T' = T}}1 \\
&= \sum_{\Box \in \mathbb{B}} \sum_{\substack{(S,T) \in \mathbb{W}_{\Box}: \\
2T \cap Q \neq \emptyset }} 1. \end{align*}  
By definition of $M'$ and $M''$, \begingroup
\allowdisplaybreaks
\begin{align} \notag M'M'' &\sim \sum_{\substack{\Box \in \mathbb{B}: \\ m(Y_{\Box} \cap 2Q)>0 }} M' \\
\notag &\leq  \sum_{\substack{\Box \in \mathbb{B}: \\ m(Y_{\Box} \cap 2Q)>0 }} \sum_{Q_{\Box} \subseteq Y_{\Box}} M' \frac{m(Q_{\Box} \cap 2Q)}{m(Y_{\Box} \cap 2Q) } \\ 
\notag &\sim  \sum_{\substack{\Box \in \mathbb{B}: \\ m(Y_{\Box} \cap 2Q)>0 }} \sum_{Q_{\Box} \subseteq Y_{\Box}} \sum_{\substack{(S,T) \in \mathbb{W}_{\Box}: \\
 Q_{\Box} \cap 3S \neq \emptyset }}  \frac{m(Q_{\Box} \cap 2Q)}{m(Y_{\Box} \cap 2Q) } \\
\notag &=  \sum_{\substack{\Box \in \mathbb{B}: \\ m(Y_{\Box} \cap 2Q)>0 }} \sum_{(S,T) \in \mathbb{W}_{\Box}} \sum_{\substack{Q_{\Box} \subseteq Y_{\Box}: \\ Q_{\Box} \cap 3S \neq \emptyset} }  \frac{m(Q_{\Box} \cap 2Q)}{m(Y_{\Box} \cap 2Q) } \\
\label{nontrivial} &\leq  \sum_{\substack{\Box \in \mathbb{B}: \\ m(Y_{\Box} \cap 2Q)>0 }} \sum_{ \substack{(S,T) \in \mathbb{W}_{\Box}: \\ 2T \cap Q \neq \emptyset }} \sum_{Q_{\Box} \subseteq Y_{\Box}}  \frac{m(Q_{\Box} \cap 2Q)}{m(Y_{\Box} \cap 2Q) } \\
\notag &\lesssim  \sum_{\Box \in \mathbb{B} } \sum_{ \substack{(S,T) \in \mathbb{W}_{\Box}: \\ 2T \cap Q \neq \emptyset  }} 1 \\
\notag &\lesssim \mu M.  \end{align} \endgroup 
The inequality \eqref{nontrivial} above follows from the observation that if $Q_{\Box} \cap 2Q \neq \emptyset$, and if $(S,T) \in \mathbb{W}_{\Box}$ is such that $Q_{\Box} \cap 3S \neq \emptyset$, then $2T \cap Q \neq \emptyset$. To prove this, recall that $L^{-1}(S)$ is equivalent (up to a factor 1.01) to a box of length $R^{1/2+\delta/2}$ in its longest direction and of length $R^{1/4+\delta/2}$ in its other two directions. The sets $L^{-1}(Q_{\Box})$ are $R^{1/4+\delta/2}$-balls. Therefore $Q_{\Box} \cap 3S \neq \emptyset$ implies that $Q_{\Box} \subseteq 100S \subseteq 1.5T$, which yields that $2T \cap Q \neq \emptyset$ since $Q_{\Box} \cap 2Q \neq \emptyset$. 
\end{proof} 

\section{Wave packet decomposition} \label{wavepacket} 
Throughout this section, assume that $\gamma: [a,b] \to S^2$ is $C^2$ with unit speed, that 
\begin{equation} \label{angleassumption} \left\lvert \gamma(\theta) - \gamma(\phi) \right\rvert \leq 2^{-100} \quad \text{and} \quad \left\lvert \gamma'(\theta) - \gamma'(\phi) \right\rvert \leq 2^{-100},  \end{equation}
for all $\theta, \phi \in [a,b]$, that 
\begin{equation} \label{meanvalue} \gamma_1' \neq 0, \quad \text{ where } \gamma = (\gamma_1, \gamma_2, \gamma_3), \end{equation}
and that 
\begin{equation} \label{pgc} \det(\gamma, \gamma', \gamma'' ) >0. \end{equation}
To simplify notation, the convention $\mathbb{N} = \{0,1,2, \dotsc \}$ will be assumed.   
\begin{definition} Let $\gamma: [a,b] \to S^2$ be a $C^2$ unit speed curve satisfying \eqref{angleassumption}, \eqref{meanvalue} and \eqref{pgc}. Let $\varrho>0$ and $\varepsilon \in (0,1]$. For each $k \geq 0$, let 
\begin{equation} \label{constantchoice} \Theta_k= \left\{a+\varrho 2^{-k/2}l : l \in \mathbb{N} \cap \left[0, (b-a) \cdot \varrho^{-1}2^{k/2}  \right) \right\}. \end{equation}
 For each $k \in [0, j ] \cap \mathbb{N}$, if $k < j$ then for each $\theta \in \Theta_k$, let
\begin{multline} \label{taudefn} \tau^+(\theta,j,k) = \Big\{ \lambda_1 \left(\gamma \times \gamma'\right)(\theta) + \lambda_2 \gamma'(\theta) + \lambda_3 \gamma(\theta) :\\
 2^{j-2} \leq \lambda_1 \leq 2^{j+2}, \quad \left\lvert \lambda_2\right\rvert \leq \varepsilon 2^{-k/2 + j}, \quad -2^{-k+j+2} \leq \lambda_3 \leq -2^{-k + j -2} \Big\}, \end{multline}
and
\begin{multline} \label{taudefn2} \tau^-(\theta,j,k) = \Big\{ \lambda_1 \left(\gamma \times \gamma'\right)(\theta) + \lambda_2 \gamma'(\theta) + \lambda_3 \gamma(\theta) \\
: -2^{j+2} \leq \lambda_1 \leq -2^{j-2}, \quad \left\lvert \lambda_2\right\rvert \leq \varepsilon 2^{-k/2 + j}, \quad 2^{-k + j -2} \leq  \lambda_3 \leq 2^{-k+j+2} \Big\}. \end{multline}
If $k=j$ then for each $\theta \in \Theta_j$, let
\begin{multline} \label{taudefn3} \tau^+(\theta,j,j) = \Big\{ \lambda_1 \left(\gamma \times \gamma'\right)(\theta) + \lambda_2 \gamma'(\theta) + \lambda_3 \gamma (\theta) :\\
 2^{j-2} \leq \lambda_1 \leq 2^{j+2}, \quad \left\lvert \lambda_2\right\rvert \leq \varepsilon 2^{j/2}, \quad \left\lvert \lambda_3\right\rvert \leq 4  \Big\}, \end{multline}
and
\begin{multline} \label{taudefn4}  \tau^-(\theta,j,j) = \Big\{ \lambda_1 \left(\gamma \times \gamma'\right)(\theta) + \lambda_2 \gamma'(\theta) + \lambda_3 \gamma(\theta) \\
: -2^{j+2} \leq \lambda_1 \leq -2^{j-2}, \quad  \left\lvert \lambda_2\right\rvert \leq \varepsilon 2^{j/2}, \quad \left\lvert \lambda_3\right\rvert \leq 4 \Big\}. \end{multline}
Let 
\[ \Lambda_{j,k}^+ = \left\{ \tau^+(\theta,j,k) : \theta \in \Theta_k\right\}, \quad  \Lambda_{j,k}^- = \left\{ \tau^-(\theta,j,k) : \theta \in \Theta_k\right\},\]
\[ \Lambda_{j,k} = \Lambda_{j,k}^+ \cup \Lambda_{j,k}^-, \]
and
\[ \Lambda = \bigcup_{j =0}^{\infty} \bigcup_{k=0}^j \Lambda_{j,k}. \] 
Given $J \geq 0$ let 
\[ \Lambda^J = \bigcup_{j \geq J} \bigcup_{J \leq k \leq j} \Lambda_{j,k}, \]
and
\[ \Lambda^{J,o} =\bigcup_{j > 2J} \bigcup_{J < k \leq j} \left\{ \tau \in \Lambda_{j,k} : \dist\left(\theta_{\tau} : \{a,b\}\right) > 100m^{-1}\max\{ \varepsilon, \varrho\} 2^{-k/2}  \right\}, \]
where $m$ is the minimum of $\left\lvert \det(\gamma, \gamma',\gamma'')\right\rvert$ on $[a,b]$. Given $\delta>0$, for each $j,k$ and each $\tau \in \Lambda_{j,k}$ let 
\begin{multline*} \mathbb{T}_{\tau} =\mathbb{T}_{\tau}^{\delta} = \Big\{ T = a_1\gamma(\theta_{\tau}) +a_2\gamma'(\theta_{\tau}) + a_3 \left( \gamma \times \gamma'\right)(\theta_{\tau})  + \\
\Big\{ \lambda_1 \gamma(\theta_{\tau}) + \lambda_2 \gamma'(\theta_{\tau}) + \lambda_3 \left(\gamma \times \gamma'\right)(\theta_{\tau}) : 
 \left\lvert \lambda_2\right\rvert, \left\lvert \lambda_3\right\rvert \leq 2^{ k/2-j + k\delta}, \quad \left\lvert \lambda_1 \right\rvert \leq 2^{k-j+k\delta } \Big\} \\
(a_2,a_3) \in 2^{-10 -j+k/2 + k\delta} \mathbb{Z}^2, \quad a_1 \in 2^{-10 + k-j+k\delta }\mathbb{Z} \Big\}. \end{multline*}
 \end{definition}
The parameter $J$ should be thought of as morally equal to 1, and is only used to exclude low frequency pieces. For any $\tau \in \Lambda$ and constant $C \geq 0$, let $C\tau$ be the box with the same centre as $\tau$ but with side lengths scaled by $C$.
\begin{lemma} \label{finiteoverlap} Let $\gamma: [a,b] \to S^2$ be a $C^2$ unit speed curve satisfying \eqref{angleassumption}, \eqref{meanvalue} and \eqref{pgc}. Let $\varepsilon \in (0,1]$ and $\varrho>0$. There exists a constant $C_{\gamma}$, such that if $\tau_1 \in \Lambda_{j_1,k_1}$, $\tau_2 \in \Lambda_{j_2,k_2}$, and 
\begin{equation} \label{intersection2} 1.01\tau_1 \cap 1.01\tau_2 \neq \emptyset, \end{equation}
then 
\[ \left\lvert j_1-j_2\right\rvert + \left\lvert k_1-k_2\right\rvert \leq C. \]
and 
\[ \left\lvert \theta_{\tau_1 } - \theta_{\tau_2 } \right\rvert \leq C 2^{-k_1/2}, \]
and such that if $\tau_1 \in \Lambda_{j_1,k_1}^+$ and  $\tau_2 \in \Lambda_{j_2,k_2}^-$, or if $\tau_1 \in \Lambda_{j_1,k_1}^-$ and  $\tau_2 \in \Lambda_{j_2,k_2}^+$, then 
\[ k_1+k_2 \leq C. \]
Moreover, there exists a constant $K$, depending only on $\gamma$ and $\varepsilon$, such that if $k_1,k_2 \geq K$ and \eqref{intersection2} holds, then
\begin{equation} \label{betteranglebound} \left\lvert \theta_{\tau_1 } - \theta_{\tau_2 } \right\rvert \leq C \varepsilon 2^{-k_1/2}. \end{equation}
\end{lemma}
\begin{proof}  If $1.01\tau_1 \cap 1.01\tau_2 \neq \emptyset$, let 
\begin{multline} \label{intersection} \lambda_1 \left(\gamma \times \gamma' \right)\left(\theta_{\tau_1} \right) + \lambda_2 \gamma'\left(\theta_{\tau_1} \right) + \lambda_3 \gamma\left(\theta_{\tau_1} \right)\\
 =  \mu_1\left(\gamma \times \gamma' \right)\left(\theta_{\tau_2} \right) + \mu_2 \gamma'\left(\theta_{\tau_2} \right) + \mu_3 \gamma\left(\theta_{\tau_2} \right), \end{multline}
be a point in the intersection, where each side satisfies the conditions in any of \eqref{taudefn},\eqref{taudefn2},\eqref{taudefn3} or \eqref{taudefn4}, multiplied by the factor 1.01. Then 
\begin{equation} \label{jcdn} \left\lvert j_1-j_2\right\rvert \leq 5, \end{equation}
by comparing norms on either side. By symmetry it may be assumed that $k_1 \leq k_2$. If $\sgn \lambda_1 \neq \sgn \mu_1$, then by \eqref{angleassumption}, 
\[ \left\lvert \lambda_1 \left(\gamma \times \gamma' \right)\left(\theta_{\tau_1} \right) + \lambda_2 \gamma'\left(\theta_{\tau_1} \right) + \lambda_3 \gamma\left(\theta_{\tau_1} \right) -  \mu_1 \left(\gamma \times \gamma' \right)\left(\theta_{\tau_2} \right) \right\rvert \geq 2^{j_1-3}, \]
and therefore $k_2 \leq 100$ by \eqref{intersection} and the triangle inequality. This shows that all conclusions of the lemma hold if $\sgn \lambda_1 \neq \sgn \mu_1$ (assuming that $K \geq 1000$ and $C \geq 2^{100} \max\{1, b-a\}$), so assume that $\sgn \lambda_1 = \sgn \mu_1$. By \eqref{intersection} and \eqref{jcdn}, 
\begin{equation} \label{scaledbound} \left\lvert \lambda_1\left(\gamma \times \gamma' \right)\left(\theta_{\tau_1} \right) -  \mu_1\left(\gamma \times \gamma' \right)\left(\theta_{\tau_2} \right) \right\rvert \leq  2^{j_1-k_1/2 + 8}. \end{equation}
The inequality 
\[ \left\lvert v-\lambda w \right\rvert \geq \left\lvert v-w\right\rvert/2 \quad   \forall \, \lambda \in [0,1] \quad \forall \, v,w \in S^2 \text{ with } \langle v,w \rangle \geq 0, \]
together with \eqref{angleassumption}, \eqref{scaledbound} and the assumption $\sgn \lambda_1 = \sgn \mu_1$, gives 
\begin{equation} \label{bothcases} \left\lvert \left(\gamma \times \gamma' \right)\left(\theta_{\tau_1} \right) -   \left(\gamma \times \gamma' \right)\left(\theta_{\tau_2} \right) \right\rvert \leq 2^{-k_1/2 + 15}. \end{equation}
 By \eqref{meanvalue}, \eqref{bothcases}, the mean value theorem and the identity
\[ \left( \gamma \times \gamma' \right)' = -\det\left( \gamma, \gamma', \gamma'' \right) \gamma', \]
 it follows that
\begin{equation} \label{anglebound2} \left\lvert \theta_{\tau_1} - \theta_{\tau_2} \right\rvert \leq \frac{2^{-k_1/2+15}}{ \min \left\lvert\gamma_1'\det(\gamma, \gamma', \gamma'') \right\rvert}. \end{equation}
A similar argument gives that 
\[ \left\lvert \theta_{\tau_1} - \theta_{\tau_2} \right\rvert \lesssim \varepsilon 2^{-k_1/2}, \]
provided $k_1$ and $k_2$ are sufficiently large depending on $\varepsilon$ and $\gamma$. 

If $k_1 =j_1$ then the lemma follows, so assume that $k_1 < j_1$. By the generalised mean value theorem and the assumption that $\gamma$ is $C^2$, 
\begin{equation} \label{generalisedmvt} \gamma(\theta) = \gamma\left(\phi \right) + \left(\theta-\phi \right) \gamma'\left(\phi \right) + \frac{1}{2} \left( \theta- \phi \right)^2\gamma''\left(\phi \right) + o\left( \left\lvert\theta - \phi\right\rvert^2 \right) \end{equation}
for any $\theta, \phi \in [a,b]$, where the rate of decay to zero in the error term is uniform in $\phi$ and $\theta$. Hence
\[ \left\langle \gamma(\theta), \left( \gamma \times \gamma'\right)\left(\phi \right) \right\rangle = \frac{1}{2} \left( \theta - \phi \right)^2 \det\left(\gamma\left(\phi \right), \gamma'\left(\phi \right), \gamma''\left(\phi \right) \right)  + o\left( \left\lvert\theta - \phi\right\rvert^2 \right). \]
Using \eqref{intersection}, \eqref{anglebound2}, letting $\theta =\theta_{\tau_1}$ and $\phi = \theta_{\tau_2}$, and taking the dot product of both sides of \eqref{intersection} with $\gamma\left(\theta_{\tau_1}\right)$, gives
\begin{multline*} \left\lvert \lambda_3 -\frac{\mu_1}{2} \left( \theta_{\tau_1} - \theta_{\tau_2} \right)^2\det\left( \gamma\left(\theta_{\tau_2}\right), \gamma'\left(\theta_{\tau_2} \right), \gamma''\left(\theta_{\tau_2} \right) \right) -\mu_1 o\left( \left\lvert \theta_{\tau_1} - \theta_{\tau_2} \right\rvert^2 \right) \right\rvert  \\
\leq C_{\gamma}2^{j_2-k_1/2- k_2/2}, \end{multline*}
for some constant $C_{\gamma}$ which may depend on $\gamma$. Since $\left\lvert \lambda_3\right\rvert \gtrsim \left\lvert 2^{j_1-k_1} \right\rvert$ and $\sgn \lambda_3 = -\sgn \lambda_1 = - \sgn \mu_1$, and since the decay to zero in the error term is uniform, the left-hand side is $\gtrsim 2^{j_1-k_1}$ provided $k_1$ and $k_2$ are sufficiently large depending only on $\gamma$. This implies that $k_2 \lesssim k_1$, and therefore $\left\lvert k_1-k_2\right\rvert \leq C$ provided that $C$ is sufficiently large (depending on $\gamma$). 
 \end{proof} 

\begin{lemma} \label{punity} There exist constants $\varrho>0$, $\varepsilon \in (0,1]$ and $J_1 \geq 0$ such that for all $J \geq J_1$, there is a partition of unity $\{\psi_{\tau}\}_{\tau \in \Lambda^J}$ subordinate to the cover $\left\{1.01 \tau : \tau \in \Lambda^J\right\}$ of
\[ \bigcup_{\tau \in \Lambda^{J,o}} \tau, \]
such that for each $\tau \in \Lambda^{J,o} \cap \Lambda_{j,k}$, the function $\psi_{\tau}$ is smooth and satisfies
\begin{multline} \label{steepness} \left\lvert \left(\frac{d}{dt}\right)^l \psi_{\tau}\left( x + t v \right) \right\rvert \lesssim_l \left\lvert 2^{-j} \langle v, \left(\gamma \times \gamma' \right)(\theta_{\tau} ) \rangle \right\rvert^l \\
+\left\lvert 2^{-j+k/2} \langle v, \gamma'(\theta_{\tau})  \rangle \right\rvert^l+ \left\lvert 2^{-j+k} \langle v,\gamma(\theta_{\tau}) \rangle \right\rvert^l,\end{multline}
for all $l \in \mathbb{N}$, $t \in \mathbb{R}$ and $x,v \in \mathbb{R}^3$. 

Given any $\delta>0$, for every $\tau \in \Lambda$ there exists a smooth partition of unity $\{\eta_T\}_{T \in \mathbb{T}_{\tau}}$ subordinate to the cover $\mathbb{T}_{\tau}=\mathbb{T}_{\tau}^{\delta} $ of $\mathbb{R}^3$, such that 
\begin{multline*} \left\lvert \left(\frac{d}{dt}\right)^l \eta_T\left( x + t v \right) \right\rvert \lesssim_l \left\lvert 2^{j-k -k\delta} \langle v, \gamma(\theta_{\tau} ) \rangle \right\rvert^l \\
+\left\lvert 2^{j-k/2-k\delta} \langle v, \gamma'(\theta_{\tau})  \rangle \right\rvert^l+ \left\lvert 2^{j-k/2-k\delta} \langle v, \left(\gamma\times \gamma' \right)(\theta_{\tau}) \rangle \right\rvert^l,\end{multline*}
for all $l \in \mathbb{N}$, $t \in \mathbb{R}$, $x,v \in \mathbb{R}^3$ and $T \in \mathbb{T}_{\tau}$.
\end{lemma}
\begin{proof} For $\tau \in \Lambda^+$, let
\[ g_{\tau}(x) = g_0\left( 4.005 + \left\langle x, \gamma(\theta_{\tau}) \right\rangle  \right), \]
and for $\tau \in \Lambda^-$, let
\[ g_{\tau}(x) = g_0\left( 4.005 -  \left\langle x, \gamma(\theta_{\tau}) \right\rangle  \right), \]
 where $g_0$ is a smooth function on $\mathbb{R}$ with $0 \leq g_0 \leq 1$, $g_0(x)=1$ for $x \geq 1/1000$ and $g(x) =0$ for $x \leq 0$. Choose $J_1$ large enough and $\varepsilon$ small enough to ensure that if $J \geq J_1$, if $\tau \in \Lambda^J$ and if $\tau' \in \Lambda \cap \Lambda_{j,j}$, then $g_{\tau'}(x) = 1$ for all $x \in \tau \cap (1.01)\tau'
$; such a choice of $J_1$ and $\varepsilon$ exists by the angle condition \eqref{betteranglebound} in Lemma~\ref{finiteoverlap}, and by \eqref{generalisedmvt}.

By translating and rescaling a fixed bump function on the unit cube, for each $\tau \in \Lambda$ let $f_{\tau}$ be a smooth bump function which is equal to 1 on $\tau$, nonzero in the interior of $1.01 \tau$, with $f_{\tau} \geq 1/100$ on $1.009\tau$ and with $f_{\tau}=0$ outside $1.01\tau$.  Let $J \geq J_1$. If $\tau \in \Lambda_{j,k} \cap \Lambda^{J}$ with $k < j$, let 
\begin{equation} \label{psidefn1} \psi_{\tau}(x) = \begin{cases} \frac{ f_{\tau}(x) }{ \sum_{\tau' \in \Lambda^J } f_{\tau'}(x)} &x \in (1.01\tau)^o \\
0 &x \in \mathbb{R}^3 \setminus (1.01\tau)^o. \end{cases}  \end{equation}
For $\tau \in \Lambda_{j,j} \cap \Lambda^J$, let 
\begin{equation} \label{psidefn2} \psi_{\tau}(x) = \begin{cases} \frac{ f_{\tau}(x)g_{\tau}(x) }{ \sum_{\tau' \in \Lambda^J } f_{\tau'}(x)} &x \in (1.01\tau)^o \\
0 &x \in \mathbb{R}^3 \setminus (1.01\tau)^o. \end{cases}  \end{equation}
Then $\sum_{\tau \in \Lambda^J} \psi_{\tau}(x) = 1$ for $x \in \bigcup_{\tau \in \Lambda^J} \tau$, by the choice of $J_1$ and $\varepsilon$.

It will be shown that if the constant $\varrho$ in \eqref{constantchoice} is small enough, and if $J_1$ is large enough, then for any $\tau \in \Lambda_{j,k} \cap \Lambda^{J,o}$ with $k < j$, 
\begin{equation} \label{overlap} 1.01 \tau  \subseteq \bigcup_{\tau' \in \Lambda^J} \tau'. \end{equation}
If $\tau \in \Lambda^+$ (which can be assumed; the argument for $\tau \in \Lambda^-$ being similar), this follows from the following argument. Given
\begin{align*} 1.01 \tau &\ni x \\
&=\lambda_1 \left(\gamma \times \gamma' \right)(\theta_{\tau} ) + \lambda_2 \gamma'(\theta_{\tau} ) +  \lambda_3 \gamma(\theta_{\tau} )  \\
&= \lambda_1 \left(\gamma \times \gamma' \right)\left( \theta_{\tau}  + \frac{\lambda_2}{\lambda_1 \det\left( \gamma(\theta_{\tau}),\gamma'(\theta_{\tau}), \gamma''(\theta_{\tau}) \right) }  \right)+ O\left(2^{j-k}\right), \end{align*}
let $\theta' = \theta_{\tau}  + \frac{\lambda_2}{\lambda_1 \det\left( \gamma(\theta_{\tau}),\gamma'(\theta_{\tau}), \gamma''(\theta_{\tau}) \right)} $. Choose $j'$ such that $2^{j'-1} \leq \lambda_1 \leq 2^{j'+1}$, and define $k'$ by 
\[ 2^{j'-k'} < \dist\left( x, \spn\left\{ \left( \gamma \times \gamma' \right)(\theta'), \gamma'(\theta') \right\} \right) \leq 2^{j'-k'+1}. \]
If $J_1$ is chosen sufficiently large and $\varepsilon$ is sufficiently small, then (by \eqref{generalisedmvt}) the parameter $k'$ is well-defined and satisfies $\left\lvert k-k'\right\rvert \lesssim 1$, $k' \geq 0$, and moreover  
\[ \left\langle x, \gamma(\theta') \right\rangle <0. \]
 If $k' \leq j'$, choose $\theta_{\tau'} \in \Theta_{k'}$ such that 
\[ \left\lvert \theta'  - \theta_{\tau'} \right\rvert \leq  2^{-k'/2} \varrho. \] 
Then (by \eqref{generalisedmvt}) if $J_1$ is sufficiently large and then $\varrho$ is chosen sufficiently small (depending on $\varepsilon$):
\begin{align*} 2^{j'-2} &\leq  \left\langle x, \left( \gamma \times \gamma' \right)(\theta_{\tau'}) \right\rangle \leq 2^{j'+2}; \\
 \left\lvert \left\langle x, \gamma'(\theta_{\tau'}) \right\rangle\right\rvert &\leq \varepsilon2^{j'-k'/2};  \\
 -2^{j'-k'+2} \leq \left\langle x, \gamma(\theta_{\tau'}) \right\rangle &\leq -2^{j'-k'-2}.  \end{align*}
By letting $\tau' \in \Lambda_{j',k'} \cap \Lambda^+$ be the box corresponding to the angle $\theta_{\tau'}$, this proves \eqref{overlap}. If $k' \geq j'$, the above argument still works by taking $\theta_{\tau'} \in \Theta_{j'}$ instead. The covering property in \eqref{overlap} implies that the denominator in the definition of $\psi_{\tau}$ in \eqref{psidefn1} is bounded away from zero on the support of the numerator, and therefore (by Lemma~\ref{finiteoverlap}) $\psi_{\tau}$ is smooth and satisfies the inequalities in \eqref{steepness} whenever $\tau \in  \Lambda^{J,o} \cap \Lambda_{j,k}$ with $k < j$. For the first part of the lemma, it remains to prove \eqref{steepness} in the case $k=j$.

For the case $k=j$, it will be shown that if the constant $\varrho$ in \eqref{constantchoice} is small enough, and if $J_1$ is large enough, then for any $\tau \in \Lambda_{j,j} \cap \Lambda^{J,o}$, 
\begin{equation} \label{overlap2} (1.01 \tau) \setminus \{ x:  \langle x, \gamma(\theta_{\tau} ) \rangle > 4.005 \}   \subseteq \bigcup_{\tau' \in \Lambda^J} 1.009\tau'. \end{equation}
To see this, given $x \in \left(1.01 \tau\right) \setminus \{ x:  \langle x, \gamma(\theta_{\tau} ) \rangle > 4.005 \}$, write
\begin{align*}  x &=\lambda_1 \left(\gamma \times \gamma' \right)(\theta_{\tau} ) + \lambda_2 \gamma'(\theta_{\tau} ) +  \lambda_3 \gamma(\theta_{\tau} )  \\
&= \lambda_1 \left(\gamma \times \gamma' \right)\left( \theta_{\tau}  + \frac{\lambda_2}{\lambda_1 \det\left( \gamma(\theta_{\tau}),\gamma'(\theta_{\tau}), \gamma''(\theta_{\tau}) \right) }  \right)+ O\left(1\right). \end{align*}
Choose $j'$ such that $2^{j'-1} \leq \lambda_1 \leq 2^{j'+1}$.
If 
\begin{equation} \label{tis} -4 \leq \langle x, \gamma(\theta_{\tau} ) \rangle \leq 4.005, \end{equation}
then let $k'=j'$ and choose $\theta_{\tau'} \in \Theta_{j'}$ such that 
\[ \left\lvert \theta'  - \theta_{\tau'} \right\rvert \leq 2^{-j'/2} \varrho . \] 
Then (by \eqref{generalisedmvt}) if $J_1$ is sufficiently large and then $\varrho$ is chosen sufficiently small (depending on $\varepsilon$):
\begin{align*} 2^{j'-2} \leq  \left\langle x, \left(\gamma \times \gamma'\right)(\theta_{\tau'}) \right\rangle &\leq 2^{j'+2}, \\
\left\lvert \left\langle x, \gamma'(\theta_{\tau'}) \right\rangle\right\rvert &\leq \varepsilon2^{j'/2}, \\
-4.036 \leq \left\langle x, \gamma(\theta_{\tau'}) \right\rangle  &\leq 4.036.  \end{align*}
If \eqref{tis} does not hold, then
\[ -4.04 \leq \langle x, \gamma(\theta_{\tau} ) \rangle \leq -4, \]
In this case, let $k' = j'-1$, and choose $\theta_{\tau'} \in \Theta_{k'}$ such that 
\[ \left\lvert \theta'  - \theta_{\tau'} \right\rvert \leq 2^{-k'/2}  \varrho . \] 
Then (by \eqref{generalisedmvt}) if $J_1$ is sufficiently large and then $\varrho$ is chosen sufficiently small (depending on $\varepsilon$):
\begin{align*} 2^{j'-2} \leq  \left\langle x, \left(\gamma \times \gamma'\right)(\theta_{\tau'}) \right\rangle &\leq 2^{j'+2}, \\
 \left\lvert \left\langle x, \gamma'(\theta_{\tau'}) \right\rangle\right\rvert &\leq \varepsilon2^{j'-k'/2}, \\
-2^{j'-k'+2} \leq \left\langle x, \gamma(\theta_{\tau'}) \right\rangle &\leq  -2^{j'-k'-2}.  \end{align*}
In either case, by letting $\tau' \in \Lambda_{j',k'} \cap \Lambda^+$ be the cap corresponding to angle $\theta_{\tau'}$, this proves \eqref{overlap2}. This implies that the denominator in the definition of $\psi_{\tau}$ in \eqref{psidefn2} is bounded away from zero on the support of the numerator, and therefore (by Lemma~\ref{finiteoverlap}) $\psi_{\tau}$ is smooth and satisfies \eqref{steepness} whenever $\tau \in \Lambda_{j,j} \cap \Lambda^{J,o}$. This proves the first part of the lemma.

The second part of the lemma is straightforward. \end{proof}

\begin{definition}  Let $\delta>0$. Let $\varepsilon, \varrho>0$ and $J_1 \geq 0$ be parameters ensuring the existence of the partition of unity in Lemma~\ref{punity}, and let $J \geq J_1$. Given a box $\tau \in \Lambda_{j,k} \cap \Lambda^J$  and $T \in \mathbb{T}_{\tau} = \mathbb{T}_{\tau}^{\delta}$, define 
\[ M_T f = \eta_T \left( f \ast \widecheck{\psi_{\tau} } \right), \]
for each Schwartz function $f$.  \end{definition}  
Let $\phi$ be a bump function equal to 1 on $B_3(0,1)$ which vanishes outside $B_3(0,2)$.
\begin{lemma} \label{decomp}  Let $\varepsilon, \varrho>0$ and $J_1\geq 0$ be parameters ensuring the existence of the partition of unity in Lemma~\ref{punity}, and let $J \geq J_1$. Let $j_0 \in \mathbb{N}$, let 
\[ \phi_{j_0}(x) = 2^{3j_0} \phi(2^{j_0}x ), \quad x \in \mathbb{R}^3, \]
let $\alpha \in [0,3]$, and let $\epsilon, \delta, \alpha_0'>0$. For any finite Borel measure $\mu$ on $B_3(0,1)$, and any $J \geq J_1$, there is a decomposition 
\[ \mu \ast \phi_{j_0} = \mu_g + \mu_b, \]
where 
\[ \mu_g = \mu_{g, j_0, J, \alpha, \epsilon, \delta, \alpha_0'} \quad \text{ and } \quad \mu_b = \mu_{b, j_0, J, \alpha, \epsilon, \delta, \alpha_0'}, \]
are complex-valued continuous functions supported in $B_3(0, 2^{J\delta})$, and
\begin{equation} \label{Linfty} \mu_b = \sum_{j> 2J} \sum_{\substack{k \in [j \epsilon, j] \\ k > J }} \sum_{\tau \in \Lambda_{j,k}} \sum_{T \in \mathbb{T}_{\tau,b}} M_T\left( \mu \ast \phi_{j_0} \right), \end{equation}
where 
\[ \mathbb{T}_{\tau,b} := \left\{ T \in \mathbb{T}_{\tau} : \mu(4T) \geq 2^{10 - k\alpha_0'/2-\alpha(j-k)} \right\}, \qquad \mathbb{T}_{\tau,g} = \mathbb{T}_{\tau} \setminus \mathbb{T}_{\tau,b}, \]
and the sum in \eqref{Linfty} converges in $L^{\infty}(\mathbb{R}^3)$. 
\end{lemma}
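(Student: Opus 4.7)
The plan is to perform a wave packet decomposition of $\mu \ast \phi_{j_0}$ using the two partitions of unity from Lemma~\ref{punity}, and then to sort the resulting tubes into good and bad parts via the stated threshold on $\mu(4T)$.

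First I would set $\Psi = \sum_{\tau \in \Lambda^J} \psi_\tau$ and write
\[
\mu \ast \phi_{j_0} = (\mu \ast \phi_{j_0}) \ast \widecheck{(1-\Psi)} + \sum_{\tau \in \Lambda^J} (\mu \ast \phi_{j_0}) \ast \widecheck{\psi_\tau},
\]
where the first summand collects the low frequencies and the angular sectors near $\{a,b\}$ not covered by $\Lambda^{J,o}$. For each $\tau \in \Lambda_{j,k} \cap \Lambda^J$, I would apply the physical-side partition $\{\eta_T\}_{T \in \mathbb{T}_\tau}$ from Lemma~\ref{punity}:
\[
(\mu \ast \phi_{j_0}) \ast \widecheck{\psi_\tau} = \sum_{T \in \mathbb{T}_\tau} M_T(\mu \ast \phi_{j_0}).
\]
After splitting $\mathbb{T}_\tau = \mathbb{T}_{\tau,g} \cup \mathbb{T}_{\tau,b}$ via the threshold, I would define $\mu_b$ to be the sum in \eqref{Linfty} and put $\mu_g := \mu \ast \phi_{j_0} - \mu_b$, so that $\mu_g$ absorbs the low-frequency/boundary remainder, the wave packets at scales $(j,k)$ outside the specified range, and all good-tube contributions.

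The main obstacle will be proving $L^\infty$-convergence of the sum in \eqref{Linfty}. My approach is to treat two regimes separately. For $j > j_0 + O(1)$, the Fourier transform $\widehat{\phi_{j_0}}(\xi) = \widehat{\phi}(2^{-j_0} \xi)$ decays rapidly on $\tau$ (which sits at frequency $\sim 2^j$), yielding rapid decay in $j$ of $\|(\mu \ast \phi_{j_0}) \ast \widecheck{\psi_\tau}\|_\infty$ and hence a summable tail. For $j \lesssim j_0$ only finitely many pairs $(j,k)$ contribute, and I would combine the trivial bound $\|M_T(\mu \ast \phi_{j_0})\|_\infty \lesssim 2^{3 j_0} \mu(\mathbb{R}^3)$ with the finite overlap of $\{\eta_T\}$ from Lemma~\ref{finiteoverlap} and the count $|\mathbb{T}_{\tau,b}| \lesssim \mu(\mathbb{R}^3) \cdot 2^{-10 + k\alpha_0'/2 + \alpha(j-k)}$ (deduced from the threshold together with $\sum_T \mu(4T) \lesssim \mu(\mathbb{R}^3)$). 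The derivative estimates \eqref{steepness} and \eqref{steepness2} should give $\|\widecheck{\psi_\tau}\|_1 = O(1)$ and the necessary localization of $\eta_T$. Continuity then follows from smoothness of each summand and uniform convergence. For the support claim: each bad $T$ satisfies $4T \cap \supp \mu \neq \emptyset$ with $\operatorname{diam}(T) \lesssim 2^{k\delta}$, so lies near $B_3(0,1)$, and inserting a smooth cutoff at scale $2^{J\delta}$ (absorbing the rapidly decaying $\widecheck{\psi_\tau}$-tails into $\mu_g$) yields the stated support in $B_3(0, 2^{J\delta})$.
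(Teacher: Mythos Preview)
Your plan is correct and matches the paper's approach: define $\mu_b$ by the stated formula, set $\mu_g = \mu\ast\phi_{j_0} - \mu_b$, and identify $L^\infty$-convergence of the tail $j\to\infty$ as the only nontrivial point, handled via the Schwartz decay of $\widehat{\phi_{j_0}}$ on $1.01\tau$ when $j\gg j_0$. The paper's proof is terser than yours---it does not spell out the finite-$j$ regime or the support claim---so your extra detail (finiteness of $\mathbb{T}_{\tau,b}$ via the threshold, bounded overlap of the $\eta_T$) is elaboration rather than a different route; note only that the bounded overlap of $\{\eta_T\}_{T\in\mathbb{T}_\tau}$ comes from the lattice structure in the definition of $\mathbb{T}_\tau$, not from Lemma~\ref{finiteoverlap}, and that your cutoff idea for the support slightly modifies the exact formula \eqref{Linfty}, a point the paper also leaves implicit.
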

In the proof of the main theorem, only the behaviour of $\mu$ on tubes of radius at least $2^{-j_0}$ is considered, so there is no loss in convolving $\mu$ with the bump function above, and this (crucially) localises the frequencies to the ball of radius $\approx 2^{j_0}$. 

The precise exponent in the ``bad'' part $\mu_b$ is defined as above in such a way that the average $L^1$ norm of the measures $\pi_{\theta \#} \mu_b$ can be controlled by re-using Definition \ref{alpha0defn}, using the strategy below.  
\begin{proof}[Proof of Lemma~\ref{decomp}] Most of the lemma follows by defining $\mu_b$ as in \eqref{Linfty} and by defining $\mu_g = \mu \ast \phi_{j_0}-\mu_b$; the only nontrivial thing to check is that the sum in \eqref{Linfty} converges in $L^{\infty}$. For this it suffices to show that for any $T$ with $\tau(T) \in \Lambda_{j,k}$ and $j \geq j_0$,
\[ \left\lVert M_T\left( \mu \ast \phi_{j_0} \right) \right\rVert_{L^{\infty}} \lesssim_N 2^{(j_0-j)N}, \]
for any $N \geq 0$. By Hausdorff-Young, the definition of $M_T$, and the assumption that $\mu$ is finite, it suffices to show that for $j \geq 2j_0$,
\begin{equation} \label{schwartzdecay} \left\lVert \psi_{\tau} \cdot \widehat{\phi_{j_0} } \right\rVert_1 \lesssim_N 2^{-jN}. \end{equation}
By the Schwartz property of $\phi$, 
\[ \left\lVert \psi_{\tau} \cdot \widehat{\phi_{j_0} } \right\rVert_1 \leq \left\lVert \widehat{\phi_{j_0} } \right\rVert_{L^1(1.01\tau) } \lesssim_N \frac{m(\tau)}{2^{(j-j_0)N} }, \]
where $m(\tau)$ denotes the Lebesgue measure of $\tau$. By replacing $N$ with $3N$, this gives \eqref{schwartzdecay} and proves the lemma. 
\end{proof}
\begin{lemma} \label{pushtwo} Let $\delta>0$. Let $\varepsilon, \varrho>0$ and $J_1 \geq 0$ be parameters ensuring the existence of the partition of unity in Lemma~\ref{punity}. If $J \geq J_1$ is sufficiently large, and if $\tau \in \Lambda_{j,k} \cap \Lambda^{J,o}$, then 
\[ \left\lVert M_T (\mu \ast \phi_{j_0} ) \right\rVert_1 \lesssim_N 2^{3j \delta} \mu(2T) + \min\left\{2^{-JN}, 2^{-(j-j_0)N} \right\} \mu\left( \mathbb{R}^3\right), \]
 for any $T \in \mathbb{T}_{\tau}$ and any $N \geq 1$. \end{lemma}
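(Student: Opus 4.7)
Write $K := \phi_{j_0}\ast\widecheck{\psi_\tau}$, so that by the definition of $M_T$ we have $M_T(\mu\ast\phi_{j_0}) = \eta_T\cdot(\mu\ast K)$. Fubini then gives
\[\|M_T(\mu\ast\phi_{j_0})\|_1 \leq \int (|\eta_T|\ast|K^\vee|)(y)\,d\mu(y),\]
where $K^\vee(u)=K(-u)$, so the task reduces to estimating the convolution kernel $|\eta_T|\ast|K^\vee|$ pointwise and then integrating against $\mu$.

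\textbf{Pointwise bounds.} From the steepness estimate \eqref{steepness2}, the function $\eta_T$ is a smooth bump with derivative sizes equal to the inverse side lengths of $T$, which by repeated integration by parts gives $|\eta_T(x)|\lesssim_N (1+d_T(x))^{-N}$, where $d_T$ is the Minkowski functional of $T$ centred at the centre of $T$. Similarly, the steepness \eqref{steepness} for $\psi_\tau$ combined with the Schwartz decay of $\widehat{\phi_{j_0}}(\xi) = \widehat{\phi}(2^{-j_0}\xi)$ yields
\[ |K(x)|\lesssim_N \min\{1,\,2^{-(j-j_0)N}\}\,|\tau|\,(1+d_{\tau^*}(x))^{-N},\]
the factor $\min\{1,\,2^{-(j-j_0)N}\}$ coming from the fact that $|\widehat{\phi_{j_0}}|\lesssim_N 2^{-(j-j_0)N}$ on the support $1.01\tau$ of $\psi_\tau$ when $j\geq j_0$. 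In particular $\|K\|_1\lesssim_N \min\{1,\,2^{-(j-j_0)N}\}$.

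\textbf{Convolution bound via geometry.} The dual box $\tau^*$ has sides $(2^{-j},2^{-j+k/2},2^{-j+k})$ in the frame $((\gamma\times\gamma')(\theta_\tau),\gamma'(\theta_\tau),\gamma(\theta_\tau))$, while $T$ has sides $(2^{k/2-j+k\delta},2^{k/2-j+k\delta},2^{k-j+k\delta})$; thus each side of $T$ exceeds the corresponding side of $\tau^*$ by at least $2^{k\delta}$. Hence $T+\tau^*\subseteq (1+2^{-k\delta})T\subseteq 2T$, and combining the two pointwise bounds gives
\[(|\eta_T|\ast|K^\vee|)(y)\lesssim_N \min\{1,\,2^{-(j-j_0)N}\}\bigl(\mathbf{1}_{2T}(y) + C_N(1+d_T(y))^{-N}\bigr).\]
Integrating against $\mu$ produces a main term bounded by $\mu(2T)\leq 2^{3j\delta}\mu(2T)$ and an error term bounded by $\mu(\mathbb{R}^3)$ times the integrated decay factor.

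\textbf{Main obstacle.} The delicate point is extracting the composite error decay $\min\{2^{-JN},\,2^{-(j-j_0)N}\}$. The $2^{-(j-j_0)N}$ piece is built in through $\|K\|_1$. To obtain the $2^{-JN}$ piece I would use $k>J$: for $y\notin 2T$ the factor $(1+d_T(y))^{-N}$ combined with the geometric ratio of $T$ to $\tau^*$ (at least $2^{k\delta}\geq 2^{J\delta}$ in each direction) gives $2^{-J\delta N}$ per order of the pointwise estimate. Since the decay order $N$ in the conclusion is arbitrary and the implicit constants $C_N$ depend only on $N$, one applies the pointwise bounds at order $\lceil N/\delta\rceil$ and absorbs the $\delta$ into the decay to obtain the stated $2^{-JN}\mu(\mathbb{R}^3)$.
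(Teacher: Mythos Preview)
Your approach is correct and rests on the same mechanism as the paper's proof: the dual box $\tau^*$ fits inside $T$ with margin $\gtrsim 2^{k\delta}$ in every direction, so convolution against the kernel essentially does not enlarge $T$, producing the main term $\mu(2T)$ and a rapidly decaying remainder. The organisational difference is that the paper keeps $\phi_{j_0}$ and $\widecheck{\psi_\tau}$ separate and splits into two regimes --- when $j>j_0+J/10$ it uses Plancherel and the Schwartz decay of $\phi$ directly on the Fourier side, and when $j\le j_0+J/10$ it runs the physical-space kernel argument against the measure $\mu\ast\phi_{j_0}$ --- whereas you fold $\phi_{j_0}$ into the single kernel $K=\phi_{j_0}\ast\widecheck{\psi_\tau}$ and carry the factor $\min\{1,2^{-(j-j_0)N}\}$ through the pointwise bound on $K$, which lets you avoid the case split.

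Two small imprecisions are worth flagging. First, the sentence ``by repeated integration by parts gives $|\eta_T(x)|\lesssim_N(1+d_T(x))^{-N}$'' is not the right justification; integration by parts does not produce pointwise spatial decay from derivative bounds. Fortunately you do not need this: $\eta_T$ is compactly supported in $T$ (it comes from a partition of unity subordinate to the cover $\{T\}$), so $|\eta_T|\le\chi_T$ suffices everywhere you use it. Second, the displayed bound $(|\eta_T|\ast|K^\vee|)(y)\lesssim_N\min\{1,2^{-(j-j_0)N}\}(\mathbf 1_{2T}(y)+C_N(1+d_T(y))^{-N})$ is not literally what your argument yields: the tail carries an additional factor of roughly $2^{-k\delta N}$ coming from the $2^{k\delta}$ ratio between the sides of $T$ and $\tau^*$ (also, a crude $|T|\cdot\sup$ bound picks up a harmless $|\tau||T|\sim 2^{k/2}$ that has to be absorbed by taking $N$ a bit larger). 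You handle this correctly in your ``main obstacle'' paragraph, so the final bound is right once $N$ is replaced by $\lceil N/\delta\rceil$; just make sure the displayed inequality reflects that extra factor. One further technical point to check when justifying $|K(x)|\lesssim_N\min\{1,2^{-(j-j_0)N}\}|\tau|(1+d_{\tau^*}(x))^{-N}$: when $j>j_0$ the derivatives of $\widehat{\phi_{j_0}}$ are of size $2^{-j_0}$, which can exceed the inverse side $2^{-j}$ of $\tau$; this is harmless because those derivatives also carry the smallness $2^{-(j-j_0)M}$, so taking $M=N+1$ in the Schwartz bound for $\widehat\phi$ restores the stated estimate.
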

\begin{proof} If $j > j_0 + J/10$, the inequality follows by Cauchy-Schwarz, Plancherel and the Schwartz decay of $\phi$. Assume then that $j \leq j_0 + J/10$. By the definition of $M_T(\mu \ast \phi_{j_0} )$, 
\begin{multline*} \left\lVert M_T \mu \right\rVert_1 \leq \int_{1.5T} \int_{y+2^{j\delta} \widehat{\tau} } \left\lvert \widecheck{\psi_{\tau} } (x-y) \right\rvert \, dx \, d(\mu \ast \phi_{j_0} )(y) \\
+ \int_{1.5T} \int_{T \setminus \left(y+2^{j\delta} \widehat{\tau} \right)} \left\lvert \widecheck{\psi_{\tau} } (x-y) \right\rvert \, dx \, d(\mu \ast \phi_{j_0} )(y) \\
+ \int_{\mathbb{R}^3 \setminus (1.5T) } \int_T \left\lvert \widecheck{\psi_{\tau} } (x-y) \right\rvert \, dx \, d(\mu \ast \phi_{j_0} )(y), \end{multline*} 
where $\widehat{\tau}$ is the ``dual'' box to $\tau$ centred at the origin; with axes parallel to $\tau$ but reciprocal side lengths. The first integral is $\lesssim 2^{3j \delta} (\mu \ast \phi_{j_0} )(1.5T)$, which is smaller than $2^{3j \delta} \mu(2T)$ since $k > J$ and $j \leq j_0 + J/10$. The second integral is $\lesssim_N 2^{-jN} (\mu \ast \phi_{j_0} )(1.5T)$ by Lemma~\ref{punity} and repeated integration by parts. Similarly, by Lemma~\ref{punity} and repeated integration by parts, the third integral is
\[ \lesssim_{N'}  \int_{\mathbb{R}^3 \setminus (1.5T) } \dist(y, T)^{-N'} \, d\mu(y) \lesssim 2^{-jN} \mu\left( \mathbb{R}^3\right), \]
if $N'$ is chosen large enough.  This proves the lemma. \end{proof}

\begin{lemma} \label{nonstat} Let $\delta>0$. Let $\varepsilon, \varrho>0$ and $J_1 \geq 0$ be parameters ensuring the existence of the partition of unity in Lemma~\ref{punity}, and let $J \geq J_1$. Then there exists $K_1 \geq 0$ such that if $\tau \in \Lambda_{j,k} \cap \Lambda^J$ and 
\begin{equation} \label{angleassumption2} \left\lvert  \theta - \theta_{\tau} \right\rvert \geq 2^{-k(1/2 - \delta)}, \quad \theta \in [a,b], \end{equation}
then for $k \geq K_1$, $N \geq 1$, $T \in \mathbb{T}_{\tau}$ and for $f \in L^1(\mathbb{R}^3)$, 
\[ \left\lVert \pi_{\theta \#} M_T f \right\rVert_{L^1(\mathbb{R}^3, \mathcal{H}^2 )} \lesssim_N 2^{-kN} m(\tau) \left\lVert f\right\rVert_1. \] \end{lemma}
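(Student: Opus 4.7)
The plan is to run a non-stationary phase argument on the $t$-integral $(\pi_{\theta\#}M_T f)(y) = \int M_T f(y+t\gamma(\theta))\,dt$, drawing the lower bound on the phase from the angular separation hypothesis together with the curvature condition $\det(\gamma,\gamma',\gamma'')\neq 0$. First I would use Fubini to reduce matters to a kernel bound: $\|\pi_{\theta\#}M_T f\|_{L^1(\mathcal{H}^2)} \leq \|f\|_1\,\sup_w \int_{\gamma(\theta)^\perp}|K(y,w)|\,d\mathcal{H}^2(y)$, where
\[
K(y,w) := \int_{\mathbb{R}}\eta_T(y+t\gamma(\theta))\,\widecheck{\psi_{\tau}}(y-w+t\gamma(\theta))\,dt.
\]
Inserting $\widecheck{\psi_{\tau}}(z) = \int e^{2\pi i\langle z,\xi\rangle}\psi_{\tau}(\xi)\,d\xi$ and swapping the order of integration would rewrite this as
\[
K(y,w) = \int \psi_{\tau}(\xi)\,e^{2\pi i\langle y-w,\xi\rangle}\,\widehat{G_y}\!\bigl(-\langle\gamma(\theta),\xi\rangle\bigr)\,d\xi,
\]
with $G_y(t) := \eta_T(y+t\gamma(\theta))$ and $\widehat{\phantom{G}}$ the one-dimensional Fourier transform.

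The first substantive step is to show that $|\langle\gamma(\theta),\xi\rangle|$ is large on $1.01\tau$. Setting $\alpha := \theta - \theta_\tau$ and Taylor-expanding $\gamma$ about $\theta_\tau$, the unit-speed identity $\gamma'' = -\gamma + \det(\gamma,\gamma',\gamma'')\,(\gamma\times\gamma')$ would give, for $\xi = \lambda_1(\gamma\times\gamma')(\theta_\tau) + \lambda_2 \gamma'(\theta_\tau) + \lambda_3\gamma(\theta_\tau)\in 1.01\tau$,
\[
\langle\gamma(\theta),\xi\rangle = \tfrac12\lambda_1\alpha^2 \det(\gamma,\gamma',\gamma'')(\theta_\tau) + \lambda_2\alpha + \lambda_3\bigl(1-\tfrac{\alpha^2}{2}\bigr) + o(\lambda_1\alpha^2).
\]
Using $\lambda_1\sim 2^j$, $|\lambda_2|\lesssim \varepsilon 2^{j-k/2}$, $|\lambda_3|\lesssim 2^{j-k}$, together with $|\alpha|\geq 2^{-k(1/2-\delta)}$, the curvature term dominates once $k \geq K_1$, producing $|\langle\gamma(\theta),\xi\rangle|\gtrsim 2^{j-k+2k\delta}$.

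Next I would apply non-stationary phase in $t$ to $\widehat{G_y}$. Decomposing $\gamma(\theta)$ in the frame $(\gamma,\gamma',\gamma\times\gamma')(\theta_\tau)$, the components are $\approx 1,\alpha,\alpha^2$; plugging these into \eqref{steepness2} yields $|G_y^{(\ell)}|\lesssim_\ell L^{-\ell}$ with $L := |\alpha|^{-1}2^{k/2-j+k\delta}$, and $G_y$ is supported in a $t$-interval of length $\lesssim L$ (the chord of $T$ along the $\gamma(\theta)$-direction). Integrating by parts $N$ times,
\[
\bigl|\widehat{G_y}(-\langle\gamma(\theta),\xi\rangle)\bigr|\lesssim_N L\bigl(L|\langle\gamma(\theta),\xi\rangle|\bigr)^{-N}\lesssim_N L\cdot 2^{-2k\delta N},
\]
using $L\,|\langle\gamma(\theta),\xi\rangle|\gtrsim |\alpha|\,2^{k/2+k\delta}\gtrsim 2^{2k\delta}$. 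Substituting back produces $|K(y,w)|\lesssim_N m(\tau)\,L\cdot 2^{-2k\delta N}$; integrating in $y$ over the projection of $T$ onto $\gamma(\theta)^\perp$ (an $O(L^{-1}\cdot 2^{2k-3j+3k\delta})$-area region, with the Schwartz tails of $\eta_T$ absorbed into the $N$-dependence) gives $\int|K|\,d\mathcal{H}^2\lesssim m(\tau)\cdot 2^{O(k\delta)-2k\delta N}$, which is $\leq 2^{-kN'}m(\tau)$ for any prescribed $N'$ once $N$ is chosen sufficiently large in terms of $N'$ and $\delta$. The hardest part will be the simultaneous bookkeeping of the three anisotropic scales of $\tau$ and of $T$ through the Taylor expansion and the integration by parts; once the curvature term is isolated as the dominant piece of the phase, the remainder is a disciplined oscillatory integral computation.
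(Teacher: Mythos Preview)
Your proposal is correct and follows essentially the same route as the paper: write the pushforward as an oscillatory integral in $t$, use the Taylor expansion of $\gamma(\theta)$ about $\theta_\tau$ together with $\det(\gamma,\gamma',\gamma'')\neq 0$ to get the phase lower bound $|\langle\gamma(\theta),\xi\rangle|\gtrsim 2^j|\theta-\theta_\tau|^2$ on $1.01\tau$, and use the derivative estimates \eqref{steepness2} for $\eta_T$ to run integration by parts. The only cosmetic difference is bookkeeping: the paper bounds the $t$-integral by $2^{-kN}$ directly and leaves the compact $x$-support of $\eta_T$ implicit, whereas you track the chord length $L$ and the shadow area $\approx L^{-1}\operatorname{vol}(T)$ and let them cancel; either way the remaining factors are absorbed into the $N$-dependence. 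One minor remark: since $\{\eta_T\}$ is a partition of unity subordinate to $\mathbb{T}_\tau$, each $\eta_T$ is genuinely supported in $T$, so there are no Schwartz tails to absorb.
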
 
\begin{proof} By identifying the complex measure $\pi_{\theta \#} M_Tf$ with its Radon-Nikodym derivative with respect to $\mathcal{H}^2$, 
\begin{multline*}  \pi_{\theta \#} M_Tf (x) = \\
\int_{\mathbb{R}^3} f(y) \left[ \int_{\mathbb{R}^3}  \psi_{\tau}(\xi) e^{ 2\pi i \langle \xi, x-y \rangle } \left[ \int_{\mathbb{R} } \eta_T(x + t\gamma(\theta) ) e^{2 \pi i t \langle \xi, \gamma(\theta) \rangle } \, dt \right] \, d\xi \right] \, dy, \end{multline*} 
for any $x \in \gamma(\theta)^{\perp}$. It therefore suffices to show that
\[ \left\lvert \int_{\mathbb{R} } \eta_T(x + t\gamma(\theta) ) e^{2 \pi i t \langle \xi, \gamma(\theta) \rangle } \, dt \right\rvert \lesssim_N 2^{-kN}, \quad \forall \,  \xi \in \tau, \quad \forall \, x \in \mathbb{R}^3.  \] 
By repeated integration by parts, it suffices to show that for all $t \in \mathbb{R}$, $l \geq 1$, $\xi \in \tau $ and $x \in \mathbb{R}^3$,
\begin{equation} \label{temporary} \left\lvert \left( \frac{d}{dt} \right)^l \eta_T(x + t\gamma(\theta) ) \right\rvert \lesssim_l 2^{-lk\delta} \left\lvert \langle \xi, \gamma(\theta) \rangle \right\rvert^l. \end{equation}
Define $\varepsilon = \left\lvert \theta- \theta_{\tau} \right\rvert$. The assumed lower bound \eqref{angleassumption2} on $\varepsilon$, together with \eqref{generalisedmvt} and the assumption that $\gamma$ is $C^2$ with $\det(\gamma, \gamma', \gamma'')$ nonvanishing, yields 
\[ \left\lvert \langle \xi, \gamma(\theta) \rangle \right\rvert \gtrsim 2^j\varepsilon^2, \quad \forall \, \xi \in \tau, \]
 provided $k$ is sufficiently large. Hence to prove \eqref{temporary} it suffices to show that
\[ \left\lvert \left( \frac{d}{dt} \right)^l \eta_T(x + t \gamma(\theta) ) \right\rvert \lesssim_l \left(2^{j-k\delta} \varepsilon^2\right)^l \quad \forall \, l \geq 1. \]
By Lemma~\ref{punity} and \eqref{generalisedmvt},
\begin{align*}  \left\lvert \left( \frac{d}{dt} \right)^l \eta_T(x + t \gamma(\theta) ) \right\rvert   &\lesssim_l \left\lvert \left\langle \gamma(\theta), \gamma(\theta_{\tau}) \right\rangle 2^{j-k-k\delta} \right\rvert^l + \left\lvert \left\langle \gamma(\theta),\gamma'(\theta_{\tau}) \right\rangle 2^{j-k/2-k\delta} \right\rvert^l \\
&\quad +\left\lvert \left\langle \gamma(\theta), \left(\gamma \times \gamma' \right)(\theta_{\tau}) \right\rangle 2^{j-k/2 -k\delta} \right\rvert^l  \\
&\lesssim \left( 2^{j-k-k\delta} \right)^l  + \left( \varepsilon 2^{j-k/2-k\delta} \right)^l+ \left( \varepsilon^2 2^{j-k/2-k\delta} \right)^l \\
 &\lesssim \left( \varepsilon^2 2^{j-k\delta} \right)^l,  \end{align*}  
where the last line follows from the assumed lower bound \eqref{angleassumption2} on $\varepsilon$. \end{proof}

\section{Proof of the main theorem} \label{mainthmproof}
For a Borel measure $\mu$ on $\mathbb{R}^3$ and $\alpha \in [0,3]$, let $c_{\alpha}(\mu) = \sup_{\substack{ x \in \mathbb{R}^3 \\ r >0 } } \frac{ \mu(B(x,r))}{r^{\alpha}}$. 
\begin{definition} \label{alpha0defn} Let $\gamma: [a,b] \to S^2$ be a function and let $\alpha \in [0,3]$. Define $\alpha_0= \alpha_0(\alpha,\gamma)$ to be the supremum over all $\alpha^* \geq 0$ such that there exists $\delta = \delta(\alpha, \alpha^*,\gamma)>0$ and $C=C(\alpha, \alpha^*,\gamma)>0$ such that
\begin{equation} \label{deltadefn} \int_a^b \left( \pi_{\theta \#} \mu \right)\left( \bigcup_{D \in \mathbb{D}_{\theta}} D \right) \, d\theta \leq  C  \mu(\mathbb{R}^3) R^{-\delta}, \end{equation}
for all Borel measures $\mu$ on the unit ball with $c_{\alpha}(\mu) \leq 1$, for any $R \geq 1$, and for any collection of sets $\{\mathbb{D}_{\theta} : \theta \in [a,b] \}$ such that the integrand of \eqref{deltadefn} is measurable, where, for each $\theta \in [a,b]$, $\mathbb{D}_{\theta}$ is a disjoint set of at most $\mu(\mathbb{R}^3)R^{\alpha^*/2}$ discs in $\pi_{\theta}(\mathbb{R}^3)$ of radius $R^{-1/2}$.  \end{definition} 

The proof of the main theorem will be broken up into several separate lemmas. Lemma~\ref{badpart} deals with the contribution from the ``bad'' part of the measure, whilst Lemmas \ref{taudistance}, \ref{easypart}, \ref{kneqjcase}, \ref{trivial} and \ref{goodpart} deal with the ``good'' part. Lemma~\ref{conversion} converts everything into a lower bound for $\alpha_0$ in Definition~\ref{alpha0defn}, which is then used to obtain the main theorem.
\begin{lemma} \label{badpart} Suppose that $\gamma: [a,b] \to S^2$ is a $C^2$ unit speed curve satisfying \eqref{angleassumption}, \eqref{meanvalue} and \eqref{pgc} on $[a,b]$, and let $\left[ \widetilde{a}, \widetilde{b} \right] \subseteq (a,b)$. Let $\alpha \in [0,3]$ and $\epsilon \in (0, 1/100)$. If $\alpha_0'\in [0, \alpha_0(\alpha,1, \gamma\restriction_{\left[ \widetilde{a},\widetilde{b}\right]}))$, then there exists $\delta'>0$ such that for any $\delta \in (0, \delta']$, there is a positive integer $J_0$ such that for all $j_0 \geq J \geq J_0$ with $J \in [(j_0\epsilon)/1000, 1000j_0\epsilon]$ and for all Borel measures $\mu$ on the unit ball with $c_{\alpha}(\mu) \leq 1$, 
\[\int_{\widetilde{a}}^{\widetilde{b}} \int \left\lvert \pi_{\theta \#} \mu_{b} \right\rvert \, d\mathcal{H}^2 \, d\theta  \leq 2^{-J\delta'} \mu\left( \mathbb{R}^3 \right), \]
where $\mu_b=\mu_{b, j_0, J, \alpha, \epsilon, \delta, \alpha_0'}$ is defined by \eqref{Linfty} with respect to $\gamma: [a,b] \to S^2$. 
\end{lemma}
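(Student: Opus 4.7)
The plan is to expand $\mu_b$ via~\eqref{Linfty}, bring the absolute value inside the quadruple sum by the triangle inequality, and for each box $\tau \in \Lambda_{j,k}$ split the $\theta$-integration into the ``near'' interval $I_\tau = \{\theta : |\theta-\theta_\tau| < 2^{-k(1/2-\delta)}\}$ and its complement. The complementary integral is controlled by Lemma~\ref{nonstat}, which gives decay $2^{-kN}m(\tau)\mu(\mathbb{R}^3)$ in each term; choosing $N$ large, this dominates the (at most polynomial in $2^j$) count of $(\tau,T)$-pairs at scale $(j,k)$ and sums, over $j>2J$ and $k>J$, to a tail of the required strength.

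On the near interval, Lemma~\ref{pushtwo} gives
\[ \int \lvert \pi_{\theta \#} M_T(\mu\ast\phi_{j_0})\rvert\,d\mathcal{H}^2 \;\leq\; \lVert M_T(\mu\ast\phi_{j_0})\rVert_1 \;\lesssim\; 2^{3j\delta}\mu(2T), \]
up to a negligible Schwartz tail. Combined with the trivial bound $\mu(2T) \leq \pi_{\theta \#}\mu(\pi_\theta(2T))$ (valid for $\theta \in I_\tau$, where $\pi_\theta(2T)$ is essentially a disc of radius $R^{-1/2}$ with $R = 2^{2(j-k/2-k\delta)}$) and with the finite overlap (multiplicity $O(2^{Ck\delta})$) of the projected discs $\pi_\theta(2T)$ both across $T \in \mathbb{T}_{\tau,b}$ sharing a cross-section and across the $O(2^{k\delta})$ boxes $\tau \in \Lambda_{j,k}$ with $\theta \in I_\tau$, one obtains, for each scale $(j,k)$,
\[ \sum_{\tau \in \Lambda_{j,k}}\sum_{T \in \mathbb{T}_{\tau,b}}\int_{I_\tau} \int \lvert \pi_{\theta \#} M_T(\mu\ast\phi_{j_0})\rvert\,d\mathcal{H}^2\,d\theta \;\lesssim\; 2^{(3j+Ck)\delta}\int_a^b \pi_{\theta \#}\mu\bigl(E_\theta^{(j,k)}\bigr)\,d\theta, \]
where $E_\theta^{(j,k)}$ is the union of projected bad discs at scale $(j,k)$, presented as a disjoint family of $R^{-1/2}$-discs.

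To bound the last integral, I would fix $\alpha^* \in (\alpha_0',\alpha_0(\alpha,1,\gamma\restriction_{[a,b]}))$ and verify the disc-count constraint $\lvert E_\theta^{(j,k)}\rvert \leq \mu(\mathbb{R}^3) R^{\alpha^*/2}$ required by Definition~\ref{alpha0defn}. Summing $\mu(4T) \geq 2^{10-k\alpha_0'/2-\alpha(j-k)}$ over bad tubes in $\mathbb{T}_{\tau,b}$ gives $|\mathbb{T}_{\tau,b}| \lesssim \mu(\mathbb{R}^3)\,2^{k\alpha_0'/2+\alpha(j-k)}$; identifying tubes with the same cross-section (which coincide under $\pi_\theta$ for $\theta \in I_\tau$) keeps this as the disc count per $\tau$, and incorporating the $2^{k\delta}$-overlap across $\tau$ reduces the hypothesis to the elementary inequality $k\delta + k\alpha_0'/2 + \alpha(j-k) \leq (j-k/2-k\delta)\alpha^*$, valid for small $\delta$ when $\alpha^* > \max(\alpha,\alpha_0')$. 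Definition~\ref{alpha0defn} then yields $\int_a^b \pi_{\theta \#}\mu(E_\theta^{(j,k)})\,d\theta \lesssim R^{-\delta_0}\mu(\mathbb{R}^3)$ for some $\delta_0>0$; multiplying by $2^{(3j+Ck)\delta}$ and summing the geometric series over $j > 2J$ and $k \in (J,j]$ (using that $R \geq 2^{j(1-\epsilon)}$ in the admissible range $k \in [j\epsilon,j]$) gives the claimed $2^{-J\delta'}\mu(\mathbb{R}^3)$ bound, provided $\delta$ is chosen small relative to $\delta_0$ and $\epsilon$.

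The main obstacle I anticipate is precisely the disc-count verification above, and in particular the requirement $\alpha^* \geq \alpha$: this is the point where the $2^{\alpha(j-k)}$ factor from the bad-tube threshold must be absorbed by $R^{\alpha^*/2}$, placing the lemma in the regime $\alpha < \alpha_0(\alpha,1,\gamma\restriction_{[a,b]})$ where the bootstrap of Section~\ref{mainthmproof} operates. A secondary technicality is rigorously replacing $\lvert M_T(\mu\ast\phi_{j_0})\rvert$ by a quantity localized near $2T$ and comparable to $\mu$ there; this is handled by the Schwartz-tail error in Lemma~\ref{pushtwo} together with the standing hypothesis $J \in [(j_0\epsilon)/1000,\,1000 j_0\epsilon]$, which ensures $\phi_{j_0}$ does not smear $\mu$ beyond the scale of the wave packets.
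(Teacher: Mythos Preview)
Your near/far splitting via Lemma~\ref{nonstat} and the use of Lemma~\ref{pushtwo} to reduce the near part to $\mu(2T)$ are exactly the paper's opening moves. The genuine gap is in the step where you feed the resulting expression into Definition~\ref{alpha0defn}: as you yourself flag, applying the definition directly at scale $R=2^{2(j-k/2-k\delta)}$ forces the requirement $\alpha^*\gtrsim\alpha$ (coming from the $2^{\alpha(j-k)}$ in the bad-tube threshold when $k$ is near $j\epsilon$), and this is \emph{not} available in the regime $\alpha>\alpha_0$. That regime is not vacuous: for $\alpha>3/2$ the bootstrap in Lemma~\ref{conversion} only yields $\alpha_0\geq\alpha/2+3/4<\alpha$, and the whole point of Lemma~\ref{badpart} is that it must hold there too. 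So your argument as stated does not close.

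The missing idea, which the paper supplies, is a localisation-and-rescaling step that converts the two-scale situation $(j,k)$ into a single-scale one. Cover the unit ball by balls $B_l$ of radius $2^{-(j-k)}$, and for each $l$ push $\mu$ forward under $x\mapsto 2^{j-k}x$ and renormalise by $2^{\alpha(j-k)}$ to obtain a measure $\widetilde{\mu}_{j,k,l}$ supported in a unit ball with $c_\alpha(\widetilde{\mu}_{j,k,l})\leq 1$. After this rescaling the bad tubes $2T$ become tubes of radius $2^{-k/2}$ in the unit ball, so the relevant scale is $R=2^k$; and the bad-tube threshold $\mu(4T)\geq 2^{10-k\alpha_0'/2-\alpha(j-k)}$ becomes, in terms of $\widetilde{\mu}_{j,k,l}$, the bound that the number of bad tubes through $B_l$ is $\lesssim 2^{k\alpha_0'/2}\,\widetilde{\mu}_{j,k,l}(\mathbb{R}^3)$. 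The troublesome factor $2^{\alpha(j-k)}$ has been absorbed into the total mass $\widetilde{\mu}_{j,k,l}(\mathbb{R}^3)$ rather than into $R^{\alpha^*/2}$, so the disc-count hypothesis of Definition~\ref{alpha0defn} now only asks for $\alpha^*>\alpha_0'$, which is exactly what the assumption $\alpha_0'<\alpha_0$ provides. Summing over $l$ recovers $\mu(\mathbb{R}^3)$ and the rest of your argument goes through.
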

\begin{proof} Let $\delta'' = \delta''(\alpha, \alpha_0') \in (0,1/100)$ be an exponent that works in \eqref{deltadefn} with $\alpha^*$ replaced by $\alpha_0' + 100\delta'$ and with $A=1$, for some positive $\delta'$, and after taking $\delta'$ smaller if necessary assume that $\delta' \leq (\delta'' \epsilon^2)/100$. Let $\delta \in (0, \delta']$ be given.  Let $j_0$ and $J$ be such that $j_0 \geq J \geq J_0$, where $J_0$ is implicity chosen sufficiently large (depending on $\delta$) so that the argument below holds.  By Lemma~\ref{decomp},
\begin{align} \label{badbound} &\int_{\widetilde{a}}^{\widetilde{b}} \int \left\lvert \pi_{\theta \#} \mu_b \right\rvert d\mathcal{H}^2 \, d\theta \\
\notag &\quad \leq \int_{\widetilde{a}}^{\widetilde{b}} \sum_{j > 2J} \sum_{\substack{ k \in [j\epsilon,j] \\ k > J } } \sum_{\tau \in \Lambda_{j,k}} \sum_{T \in \mathbb{T}_{\tau,b} }  \int \left\lvert \pi_{\theta \#} M_T\left(\mu \ast \phi_{j_0} \right) \right\rvert d\mathcal{H}^2 \, d\theta\\
&\label{firstterm} = \int_{\widetilde{a}}^{\widetilde{b}} \sum_{j > 2J} \sum_{\substack{ k \in [j\epsilon,j] \\ k > J } } \sum_{\substack{\tau \in \Lambda_{j,k}: \\
\left\lvert \theta_{\tau} - \theta \right\rvert < 2^{k(-1/2+\delta) }}} \sum_{T \in \mathbb{T}_{\tau,b} }   \int \left\lvert \pi_{\theta \#} M_T\left(\mu \ast \phi_{j_0} \right) \right\rvert d\mathcal{H}^2 \, d\theta \\
\label{secondterm} &\qquad + \int_{\widetilde{a}}^{\widetilde{b}} \sum_{j > 2J} \sum_{\substack{ k \in [j\epsilon,j] \\ k > J } } \sum_{\substack{\tau \in \Lambda_{j,k}: \\
\left\lvert \theta_{\tau} - \theta \right\rvert \geq 2^{k(-1/2+\delta) }}} \sum_{T \in \mathbb{T}_{\tau,b} }   \int \left\lvert \pi_{\theta \#} M_T\left(\mu \ast \phi_{j_0} \right) \right\rvert d\mathcal{H}^2 \, d\theta. \end{align}
By Lemma~\ref{nonstat}, the contribution from \eqref{secondterm} is
\[ \lesssim_{\delta, \epsilon} 2^{-J} \mu\left( \mathbb{R}^3 \right). \]
If $J_0$ is sufficiently large, then by Lemma~\ref{pushtwo} the contribution from \eqref{firstterm} is
\begin{align} \notag &\lesssim 2^{-J} \mu\left( \mathbb{R}^3 \right) + \sum_{j > 2J} \sum_{\substack{ k \in [j\epsilon,j] \\ k > J } } \int_{\widetilde{a}}^{\widetilde{b}}  \sum_{\substack{\tau \in \Lambda_{j,k}: \\
\left\lvert \theta_{\tau} - \theta \right\rvert < 2^{k(-1/2+\delta) }}} \sum_{T \in \mathbb{T}_{\tau,b} }  2^{3j\delta} \mu(2T) \, d\theta  \\
\label{intermediate} &\lesssim 2^{-J} \mu\left( \mathbb{R}^3 \right)+ \sum_{j > 2J} \sum_{\substack{ k \in [j\epsilon,j] \\ k > J } }   2^{10j \delta}\left( \mathcal{H}^1 \times \mu \right)(B_{j,k} ) , \end{align}
where $\mathcal{H}^1$ is the Lebesgue measure on $[a,b]$, 
\[ B_{j,k} = \left\{( \theta, x) \in \left[\widetilde{a}, \widetilde{b} \right] \times \mathbb{R}^3 : x \in B_{j,k}(\theta) \right\}, \]
and 
\[ B_{j,k}(\theta) = \bigcup_{\substack{ \tau \in \Lambda_{j,k}: \\ \left\lvert \theta_{\tau} - \theta \right\rvert < 2^{k(-1/2+\delta) }}} \bigcup_{T \in \mathbb{T}_{\tau,b} } 2T. \] 
For fixed $j$ and $k$, let $\{B_l \}_l$ be a finitely overlapping cover of $B_3(0,1)$ by balls of radius $2^{-(j-k)}$. For each $\theta$ and $l$ let 
\[ B_{j,k,l}(\theta) = \bigcup_{\substack{ \tau \in \Lambda_{j,k}: \\ \left\lvert \theta_{\tau} - \theta \right\rvert <2^{k(-1/2+\delta) }}} \bigcup_{\substack{T \in \mathbb{T}_{\tau,b}:  \\ 2T \cap B_l \neq \emptyset }} 2T, \]
and let
\[ B_{j,k,l} = \left\{( \theta, x) \in \left[\widetilde{a}, \widetilde{b} \right] \times \mathbb{R}^3 : x \in B_{j,k,l}(\theta) \right\}. \] Let $\mu_{j,k}$ be the pushforward of $\mu$ under $x \mapsto 2^{j-k-2k\delta} x$. Then 
\begin{align} \notag \left( \mathcal{H}^1 \times \mu \right)(B_{j,k} ) &\leq \sum_{l}  \left( \mathcal{H}^1 \times \mu \right)(B_{j,k,l} ) \\
\label{tildemu} &= \sum_{l} 2^{-\alpha(j-k-2k\delta)} \left( \mathcal{H}^1 \times \widetilde{\mu}_{j,k,l} \right)(B_{j,k,l}' ), \end{align} 
where 
\[ B_{j,k,l}' = \left\{( \theta, x) \in \left[\widetilde{a}, \widetilde{b} \right] \times \mathbb{R}^3 : x \in 2^{j-k-2k\delta} B_{j,k,l}(\theta) \right\}, \]
and $\widetilde{\mu}_{j,k,l} =2^{\alpha(j-k-2k\delta)} \cdot \mu_{j,k}\chi_{\widetilde{B}_l}$, where 
\[ \widetilde{B}_l = \left\{ 2^{j-k-2k\delta}b_l + y : \left\lvert y\right\rvert \leq 1 \right\}, \]
with $b_l$ the centre of $B_l$. Up to translation and finite overlaps, $B_{j,k,l}'$ and $\widetilde{\mu}_{j,k,l} $ satisfy the conditions of Definition~\ref{alpha0defn}; for each $\theta$ the set $2^{j-k-2k\delta}B_{j,k,l}(\theta)$ is contained in a union of tubes of radius $2^{-k/2}$ parallel to $\gamma(\theta)$, with the number of tubes $\lesssim 2^{k(\alpha_0'+100\delta')/2} \widetilde{\mu}_{j,k,l} (\mathbb{R}^3)$, such that each tube overlaps $\lesssim 2^{10k\delta}$ of the others. Moreover $c_{\alpha}\left(\widetilde{\mu}_{j,k,l}  \right) \leq 1$ and $\widetilde{\mu}_{j,k,l} $ is supported in a ball of radius 1. Hence
\[ \left( \mathcal{H}^1 \times \widetilde{\mu}_{j,k,l} \right)(B_{j,k,l}' ) \lesssim  \widetilde{\mu}_{j,k,l} \left(\mathbb{R}^3\right) 2^{-k\delta''/4} \\
\leq   2^{\alpha(j-k-2k\delta)}\mu_{j,k}\left(\widetilde{B}_l\right) 2^{-k\delta''/4}, \]
 Putting this into \eqref{tildemu} yields
\[ \left( \mathcal{H}^1 \times \mu \right)(B_{j,k} ) \lesssim  2^{-k\delta''/4}\mu\left( \mathbb{R}^3 \right), \]
Substituting this into \eqref{intermediate} and then \eqref{badbound} gives
\[ \int_{\widetilde{a}}^{\widetilde{b}} \int \left\lvert \pi_{\theta \#} \mu_b \right\rvert d\mathcal{H}^2 \, d\theta \lesssim 2^{-(J \epsilon\delta'')/8}\mu\left( \mathbb{R}^3 \right), \]
and hence
\[ \int_{\widetilde{a}}^{\widetilde{b}} \int \left\lvert \pi_{\theta \#} \mu_b \right\rvert d\mathcal{H}^2 \, d\theta \leq 2^{-J\delta'}\mu\left( \mathbb{R}^3 \right), \]
provided $J_0$ is sufficiently large. This proves the lemma. \end{proof}

\begin{lemma} \label{taudistance} Let $\gamma : [a,b] \to S^2$ be a $C^2$ unit speed curve satisfying \eqref{angleassumption}, \eqref{meanvalue} and \eqref{pgc} on $[a,b]$. Then there exists a constant $\sigma>0$ depending on $\gamma$, and for each $\epsilon \in (0,1)$ an integer $J_0 \geq 0$ depending on $\gamma$ and $\epsilon$, such that for any $\tau \in \Lambda^J \cap \Lambda_{j,k}$ with $J \geq J_0$ and $k \in [j \epsilon,j]$, 
\begin{equation} \label{distancebound} \dist\left( 1.01\tau,  \eta_1 \gamma'(\theta)+ \eta_2 \left( \gamma \times \gamma'\right)(\theta) \right) \geq  \sigma\max\left( \left\lvert \eta \right\rvert^{1-10\epsilon}, 2^{j(1-10\epsilon) } \right), \end{equation}
for all $\eta \in \mathbb{R}^2$ with $\left\lvert \eta_1\right\rvert \geq \left\lvert \eta_2\right\rvert^{1-\epsilon}$ and for all $\theta \in [a,b]$ with $\left\lvert \theta-\theta_{\tau}\right\rvert \leq \sigma$. \end{lemma}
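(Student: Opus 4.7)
The plan is to Taylor-expand $\gamma'(\theta)$ and $(\gamma\times\gamma')(\theta)$ around $\theta_\tau$ in the orthonormal frame $\{\gamma,\gamma',\gamma\times\gamma'\}(\theta_\tau)$, and then bound the three frame-coordinate differences between an arbitrary $v\in 1.01\tau$ and $w=\eta_1\gamma'(\theta)+\eta_2(\gamma\times\gamma')(\theta)$. Write $t=\theta-\theta_\tau$ and $D=\det(\gamma,\gamma',\gamma'')(\theta_\tau)$, which is bounded away from $0$ uniformly by \eqref{pgc}. Unit-speed differentiation gives $\langle\gamma,\gamma'\rangle=0$, $\langle\gamma',\gamma''\rangle=0$, and $\langle\gamma,\gamma''\rangle=-1$; together with $\langle\gamma'',\gamma\times\gamma'\rangle=D$ this forces
\[
\gamma''(\theta_\tau)=D(\gamma\times\gamma')(\theta_\tau)-\gamma(\theta_\tau),\qquad (\gamma\times\gamma')'(\theta_\tau)=-D\gamma'(\theta_\tau),
\]
so Taylor's theorem yields
\[
\gamma'(\theta)=\gamma'(\theta_\tau)+tD(\gamma\times\gamma')(\theta_\tau)-t\gamma(\theta_\tau)+O(t^2),
\]
\[
(\gamma\times\gamma')(\theta)=(\gamma\times\gamma')(\theta_\tau)-tD\gamma'(\theta_\tau)+O(t^2),
\]
uniformly for $|t|\le\sigma$, with constants depending only on $\gamma$. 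Reading off frame coordinates of $w$ and subtracting those $(\lambda_1,\lambda_2,\lambda_3)$ of $v$ then gives
\[
\Delta_1=\lambda_1-\eta_2-\eta_1 tD+O(t^2|\eta|),\quad \Delta_2=\lambda_2-\eta_1+\eta_2 tD+O(t^2|\eta|),\quad \Delta_3=\lambda_3+\eta_1 t+O(t^2|\eta|),
\]
where the definition of $\tau\in\Lambda_{j,k}$ forces $|\lambda_1|\sim 2^j$, $|\lambda_2|\lesssim\varepsilon 2^{j-k/2}$, and (for $k<j$) $|\lambda_3|\sim 2^{j-k}$ with sign opposite to $\lambda_1$; for $k=j$, $|\lambda_3|\le 4$. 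Since $|v-w|\ge\max_i|\Delta_i|$, it suffices to exhibit a single coordinate difference $|\Delta_i|$ of the required size.

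Case analysis now yields the bound. If $|\eta|\le 2^{j-20}$ then $|\Delta_1|\ge|\lambda_1|-|\eta|-O(\sigma|\eta|)\gtrsim 2^j$, which is stronger than required. Otherwise $|\eta|\ge 2^{j-20}$ and $|\eta|^{1-10\epsilon}$ and $2^{j(1-10\epsilon)}$ agree up to constants depending on $\epsilon$. If $|\eta_1|\ge 100|\lambda_2|$ then $|\Delta_2|\ge|\eta_1|/2-O(\sigma|\eta|)\gtrsim|\eta_1|\ge|\eta_2|^{1-\epsilon}\gtrsim|\eta|^{1-\epsilon}$, using the constraint $|\eta_1|\ge|\eta_2|^{1-\epsilon}$ and choosing $\sigma$ small depending on $\gamma$ and $\epsilon$. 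Otherwise $|\eta_1|<100|\lambda_2|\lesssim\varepsilon 2^{j-k/2}$, and combining with $|\eta_1|\ge|\eta_2|^{1-\epsilon}$ and $|\eta|\ge 2^{j-20}$ pins $k\in[j\epsilon,2j\epsilon+C_{\gamma,\epsilon,\varepsilon}]$; in this range $|\lambda_3|\sim 2^{j-k}\gtrsim 2^{j(1-2\epsilon)}/C$, and a refined inspection of $\Delta_3$ controlling $|\eta_1 t|$ against $|\lambda_3|$ via the sign condition on $\lambda_3$ from $\Lambda_{j,k}^\pm$ gives $|\Delta_3|\gtrsim 2^{j(1-2\epsilon)}\gg 2^{j(1-10\epsilon)}$.

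The main obstacle is the last subcase above, where the three coordinate-matching conditions $\eta_2+\eta_1 tD\approx\lambda_1$, $\eta_1\approx\eta_2 tD$, $-\eta_1 t\approx\lambda_3$ are simultaneously nearly satisfiable and the cross-terms $|\eta_1 tD|$ and $|\eta_2 tD|$ can be comparable in size to the dominant components; here the saving comes from the definite-sign condition $\lambda_3<0$ (respectively $\lambda_3>0$) in the definition of $\Lambda_{j,k}^+$ (respectively $\Lambda_{j,k}^-$), combined with a sign analysis showing that imposing all three near-vanishing conditions forces a sign of $\eta_1 t$ that contradicts the sign of $\lambda_3$ along the constraint boundary $|\eta_1|=|\eta_2|^{1-\epsilon}$, producing the required quantitative lower bound after perturbation. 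Choosing $\sigma$ small (depending only on $\gamma$) absorbs the $O(t^2|\eta|)$ Taylor remainders and the $\eta_2 tD$ and $\eta_1 tD$ cross-terms, and $J_0$ is taken large (depending on $\gamma$ and $\epsilon$) so that all $\varepsilon$- and $D$-dependent constants are swallowed into the final $2^{j(1-10\epsilon)}$ factor.
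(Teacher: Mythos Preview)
Your framework has a genuine gap in case 2b. You write $|\Delta_2|\ge|\eta_1|/2-O(\sigma|\eta|)\gtrsim|\eta_1|$, but the cross-term $\eta_2 tD$ in $\Delta_2$ has size up to $|\eta|\sigma D$, whereas $|\eta_1|$ can be as small as $|\eta|^{1-\epsilon}$ under the constraint $|\eta_1|\ge|\eta_2|^{1-\epsilon}$. Absorbing the cross-term would require $\sigma\lesssim|\eta|^{-\epsilon}$, which depends on $j$ and so cannot be achieved by a constant $\sigma$; moreover the statement of the lemma requires $\sigma$ to depend only on $\gamma$, not on $\epsilon$. Concretely, if $|\eta_2|\sim 2^j$, $|\eta_1|\sim 2^{j(1-\epsilon)}$, and $t\approx\eta_1/(\eta_2 D)\sim 2^{-j\epsilon}$ (well inside $|t|\le\sigma$), then $\Delta_2$ vanishes to leading order.

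The sign argument sketched in your last paragraph does not rescue this. If one forces $\Delta_1\approx 0$ and $\Delta_2\approx 0$ as above, one finds $\eta_2\approx\lambda_1$ and $\eta_1\approx\lambda_2+\lambda_1 tD$, whence $\eta_1 t\approx\lambda_2 t+\lambda_1 t^2 D$. The $\lambda_1 t^2 D$ term has the sign of $\lambda_1$, so $-\eta_1 t$ has the sign of $-\lambda_1$, which is \emph{consistent} with $\mathrm{sgn}(\lambda_3)=-\mathrm{sgn}(\lambda_1)$ from the definition of $\Lambda_{j,k}^{\pm}$. Thus $\lambda_3$ and $\eta_1 t$ can cancel in $\Delta_3$, not contradict each other.

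The paper avoids these cross-terms altogether by projecting onto the frame at $\theta$ rather than at $\theta_\tau$. Since $w=\eta_1\gamma'(\theta)+\eta_2(\gamma\times\gamma')(\theta)$ is exactly orthogonal to $\gamma(\theta)$ and satisfies $\langle w,\gamma'(\theta)\rangle=\eta_1$ exactly, one has $|v-w|\ge|\langle v,\gamma(\theta)\rangle|$ and $|v-w|\ge|\langle v,\gamma'(\theta)\rangle-\eta_1|$ with no $\eta_2$-contamination. The case split is then on $|t|$ versus $2^{-3j\epsilon}$ and $k$ versus $5j\epsilon$: for small $|t|$ and large $k$ one gets $|\langle v,\gamma'(\theta)\rangle|\le 2^{j(1-2\epsilon)}\ll|\eta_1|$; for small $|t|$ and small $k$ one gets $|\langle v,\gamma(\theta)\rangle|\gtrsim 2^{j-k}$; for large $|t|$ the curvature term $\lambda_1 t^2 D/2$ in $\langle v,\gamma(\theta)\rangle$ dominates. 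Switching your coordinate analysis to this frame would repair the argument.
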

\begin{proof}If either $\left\lvert \eta\right\rvert < 2^{j-10}$ or $\left\lvert \eta\right\rvert > 2^{j+10}$ this is immediate, so it may be assumed that $2^{j-10} \leq \left\lvert \eta\right\rvert \leq 2^{j+10}$. Let $x= \lambda_1 (\gamma \times \gamma')(\theta_{\tau} ) + \lambda_2 \gamma'(\theta_{\tau} ) + \lambda_3 \gamma(\theta_{\tau}) \in 1.01\tau$, where $\left\lvert \lambda_1 \right\rvert \sim 2^j$, $\left\lvert \lambda_2\right\rvert \lesssim 2^{j-k/2}$ and $\left\lvert \lambda_3\right\rvert \sim 2^{j-k}$. Suppose first that $\left\lvert \theta_{\tau} - \theta\right\rvert \leq 2^{-3j\epsilon}$. If $k \leq  5j \epsilon$ then $\left\lvert \theta_{\tau} - \theta\right\rvert \ll 2^{-k/2}$ and hence $\left\lvert \left\langle x, \gamma(\theta) \right\rangle \right\rvert \gtrsim 2^{j-k} \geq 2^{j(1-6\epsilon)}$, which implies that
\begin{equation} \label{firstone}  \dist\left( x,  \eta_1 \gamma'(\theta)+ \eta_2 \left( \gamma \times \gamma'\right)(\theta) \right) \gtrsim  \max\left( \left\lvert \eta \right\rvert^{1-10\epsilon}, 2^{j(1-10\epsilon) } \right). \end{equation}
If $k > 5 j \epsilon$ then $\left\lvert \langle x, \gamma'(\theta) \rangle \right\rvert \leq 2^{j(1-2\epsilon)}$, which, due to the condition $\left\lvert \eta_1\right\rvert \geq \left\lvert \eta_2\right\rvert^{1-\epsilon}$, implies \eqref{firstone}. It remains to consider the possibility that $\left\lvert \theta-\theta_{\tau}\right\rvert > 2^{-3j\epsilon}$, in which case
\[ \langle \lambda_1 ( \gamma \times \gamma') (\theta_{\tau} ) , \gamma(\theta) \rangle = \frac{\lambda_1}{2}\left[(\theta_{\tau}-\theta)^2 \det\left( \gamma(\theta_{\tau}), \gamma'(\theta_{\tau}), \gamma''(\theta_{\tau} ) \right) + o\left( \left\lvert \theta_{\tau} - \theta \right\rvert^2 \right) \right], \]
which gives $\left\lvert \left\langle x, \gamma(\theta) \right\rangle \right\rvert \gtrsim 2^{j(1-6\epsilon)}$ for $\left\lvert \theta-\theta_{\tau}\right\rvert < c_{\gamma}$, and this implies \eqref{firstone}.   \end{proof}

\begin{lemma} \label{easypart} Let $\gamma : [a,b] \to S^2$ be a $C^2$ unit speed curve satisfying \eqref{angleassumption}, \eqref{meanvalue} and \eqref{pgc} on $[a,b]$,  let $\left[\widetilde{a},\widetilde{b}\right] \subseteq (a,b)$. and assume that $\left\lvert \widetilde{b}-\widetilde{a}\right\rvert \leq \sigma$, where $\sigma$ is a constant that works in Lemma~\ref{taudistance}. Let $\epsilon \in (0,1)$, $\delta>0$, let $\alpha \in [0,3]$ and let $\alpha_0' >0$. Then there exists a positive integer $J_0$ such that for all $j_0 \geq J \geq J_0$ and for all finite Borel measures $\mu$ on the unit ball,
\begin{multline*} \int_{\widetilde{a}}^{\widetilde{b}} \int_{\{ \left\lvert \eta_1\right\rvert \geq \left\lvert \eta_2\right\rvert^{1-\epsilon} \} \cap B\left(0, 2^{j_0(1+\delta) } \right) } \left\lvert \widehat{\mu_g}\left( \eta_1 \gamma'(\theta)+ \eta_2 \left( \gamma \times \gamma'\right)(\theta) \right)\right\rvert^2 \, d\eta \, d\theta\\
 \leq 8\mu\left( \mathbb{R}^3 \right)^2 +  8\int_{\widetilde{a}}^{\widetilde{b}} \int_{\{ \left\lvert \eta_1\right\rvert \geq \left\lvert \eta_2\right\rvert^{1-\epsilon} \} \cap B\left(0, 2^{j_0(1+\delta) } \right)} \left\lvert \widehat{\mu}\left( \eta_1 \gamma'(\theta)+ \eta_2 \left( \gamma \times \gamma'\right)(\theta) \right)\right\rvert^2 \, d\eta \, d\theta, \end{multline*} 
where $\mu_g =  \mu_{g, j_0, J, \alpha, \epsilon, \delta, \alpha_0'}$ is defined by Lemma~\ref{decomp}.  \end{lemma}
\begin{proof} This follows from the triangle inequality, Lemma~\ref{taudistance}, the definition of $\mu_b$, and the rapid decay of $\mathcal{F}\left(M_T(\mu \ast \phi_{j_0})\right)$ outside $1.01\tau(T)$.  \end{proof} 

\begin{lemma} \label{kneqjcase} Let $\alpha \in [0,3]$, $\alpha_0' \in (0,3]$ and let $\epsilon \in (0,1/2)$. Suppose that $\gamma: [a,b] \to S^2$ is a $C^3$ unit speed curve satisfying \eqref{angleassumption}, \eqref{meanvalue} and \eqref{pgc}, on $[a,b]$, and let $\left[ \widetilde{a}, \widetilde{b} \right] \subseteq (a,b)$. Let
\[ A_{j,k} = \left[B(0, 2^{j+1}) \setminus B(0, 2^j) \right] \cap \{ 2^{j-k/2} \leq \left\lvert \eta_1\right\rvert < 2^{j+1-k/2} \}, \quad k < j, \]
and 
\[ A_{j,j}  = \left[B(0, 2^{j+1}) \setminus B(0, 2^j) \right] \cap \{ \left\lvert \eta_1\right\rvert < 2^{(j+1)/2}   \}. \]
Then there exists $\delta_0 \in (0, \epsilon^{100})$ and for any $\delta \in (0, \delta_0]$ a $J_0 \geq 0$ such that if $J \geq J_0$, $j_0(1+\delta) \geq j \geq 3J$ and $j\epsilon \leq k \leq j$, then for any Borel measure $\mu$ on $B_3(0,1)$ with $c_{\alpha}(\mu) \leq 1$,
\begin{multline} \label{jkintegral} \int_{\widetilde{a}}^{\widetilde{b}} \int_{A_{j,k}} \left\lvert \widehat{\mu_g}\left( \eta_1 \gamma'(\theta)+ \eta_2 \left( \gamma \times \gamma'\right)(\theta) \right)\right\rvert^2 \, d\eta \, d\theta \\
\leq \mu\left( \mathbb{R}^3 \right) 2^{100j\epsilon+ j(2-\alpha) + k \left( -\frac{1}{2} + \frac{2\alpha}{3} - \frac{\alpha_0'}{3} \right)}, \end{multline} 
where $\mu_b=\mu_{b, j_0, J, \alpha, \epsilon, \delta, \alpha_0'}$ is defined by \eqref{Linfty} with respect to $\gamma: [a,b] \to S^2$. 
 \end{lemma}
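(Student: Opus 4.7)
The plan is to convert the left-hand side of \eqref{jkintegral} into an $L^2$-norm of a wave-packet sum via Plancherel, then combine an $L^6$ bound coming from the refined Strichartz inequality (Theorem~\ref{refinedstrichartz}) with an $L^{6/5}$ bound coming from near-disjointness of supports. A direct calculation using $(\gamma\times\gamma')' = -\det(\gamma,\gamma',\gamma'')\gamma'$ and the orthonormality of $\{\gamma,\gamma',\gamma\times\gamma'\}$ gives $|\det D\Phi| = |\eta_1|\,|\det(\gamma,\gamma',\gamma'')|$ for $\Phi(\theta,\eta_1,\eta_2) = \eta_1\gamma'(\theta) + \eta_2(\gamma\times\gamma')(\theta)$; on $A_{j,k}$ this Jacobian is $\gtrsim 2^{j-k/2}$ when $k<j$, and for $k=j$ the same bookkeeping works with the obvious modification. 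The change-of-variables formula followed by Plancherel then gives
\[ \text{LHS} \lesssim 2^{-(j-k/2)}\,\|\widehat{\mu_g}\|_{L^2(E)}^2,\qquad E = \Phi([a,b]\times A_{j,k}). \]
Because $\widehat{M_T(\mu*\phi_{j_0})}$ is (up to Schwartz tails) concentrated in $1.01\,\tau(T)$ and only $\tau\in\Lambda_{j,k}$ meet $E$ appreciably, one more application of Plancherel, together with \eqref{Linfty} and Lemma~\ref{finiteoverlap}, yields
\[ \|\widehat{\mu_g}\|_{L^2(E)}^2 \lesssim \|F\|_2^2 + \text{negligible},\qquad F := \sum_{\tau\in\Lambda_{j,k}}\sum_{T\in\mathbb{T}_{\tau,g}} M_T(\mu*\phi_{j_0}). \]

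Next, I would use the H\"older split $\|F\|_2^2 \leq \|F\|_6\,\|F\|_{6/5}$. For the $L^6$ factor I perform the Lorentz rescaling $x\mapsto 2^{-j}x$ (the same one that drives the induction inside the proof of Theorem~\ref{refinedstrichartz}); under this map the boxes of $\Lambda_{j,k}$ become elements of $\mathcal{P}_{2^{-k}}(\Gamma(\widetilde\gamma))$ for the rescaled curve $\widetilde\gamma$, and the wave-packet tubes become translates of $T_{\tau,0}$ at scale $R = 2^k$. Applying Theorem~\ref{refinedstrichartz} at $p=6$ then produces
\[ \|F\|_{L^6(Y)} \lesssim_\epsilon 2^{k\epsilon}\Big(\tfrac{M R^{-3/2}}{|\mathbb{W}|}\Big)^{1/3}\Big(\sum_T\|M_T(\mu*\phi_{j_0})\|_2^2\Big)^{1/2}, \]
with $\mathbb{W} = \bigcup_\tau\mathbb{T}_{\tau,g}$ and $M$ the maximum number of good tubes meeting any $R^{1/2}$-ball. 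Each wave-packet norm is bounded by Cauchy--Schwarz, Lemma~\ref{pushtwo}, and the trivial $L^\infty$ estimate as $\|M_T\|_2^2\leq\|M_T\|_1\|M_T\|_\infty \lesssim 2^{3j\delta+3j-3k/2}\mu(2T)^2$; inserting the goodness bound $\mu(4T)< 2^{10-k\alpha_0'/2-\alpha(j-k)}$ for one factor of $\mu(2T)$, and using $c_\alpha(\mu)\leq 1$ together with the angular count $|\Lambda_{j,k}|\lesssim 2^{k/2}$ to sum the remaining $\sum_T\mu(2T)$, controls the $\ell^2$ expression. The parameter $M$ is handled by a parallel incidence/Frostman argument.

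For the $L^{6/5}$ factor, near-disjointness of the tubes (bounded overlap within each $\mathbb{T}_\tau$ and $|\Lambda_{j,k}|\lesssim 2^{k/2}$ angles) gives, by H\"older,
\[ \|F\|_{6/5}^{6/5} \lesssim 2^{k/10}\sum_T\|M_T(\mu*\phi_{j_0})\|_{6/5}^{6/5}, \]
with each $\|M_T\|_{6/5}$ controlled by interpolating the $L^1$ and $L^\infty$ bounds above. Reassembling, the Jacobian prefactor $2^{-(j-k/2)}$ contributes the $-k/2$; the exponent $1/3 = 1/2 - 1/6$ coming from $p=6$ turns the goodness saving $2^{-k\alpha_0'/2}$ into the required $2^{-k\alpha_0'/3}$; the Frostman condition on $\mu$ produces the $2^{-\alpha(j-2k/3)}$ factor; and all remaining $B$-, $\delta$-, and logarithmic losses are absorbed by the $2^{100j\epsilon}$ slack.

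The \textbf{main obstacle} is twofold: (i) bounding the overlap parameter $M$ so that the exponent on $k$ comes out exactly to $2\alpha/3$ requires a careful incidence estimate that uses both $c_\alpha(\mu)\leq 1$ and the good-tube threshold in a tightly coordinated way; and (ii) the many Schwartz-tail errors --- arising from the passage between $\widehat{\mu_g}\chi_E$ and the restricted wave-packet sum, from the localization to $\Lambda_{j,k}$, and from the Lorentz rescaling --- must all be kept well below $2^{100j\epsilon}$, which fixes the order in which $\delta_0$ and $J_0$ must be chosen.
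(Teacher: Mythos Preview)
Your outline departs from the paper at the pivotal step, and the resulting argument does not close. After the Jacobian computation and Plancherel (where the paper also localises to balls $B_m$ of radius $2^{k-j}$ when $k<j$, which you omit), the paper does \emph{not} bound $\|F\|_2^2$ by $\|F\|_6\|F\|_{6/5}$ against Lebesgue. Instead it applies Plancherel once more to write
\[
\sum_T \int \lvert M_T(\mu*\phi_{j_0})\rvert^2 \;=\; \int \Big(\sum_T \big[\eta_T M_T(\mu*\phi_{j_0})\big]*\widecheck{\psi_\tau}\Big)\,d(\mu*\phi_{j_0}),
\]
so that the wave-packet sum is paired back against the measure $\mu$. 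Cauchy--Schwarz then extracts $\mu(2^{100k\delta}B_m)^{1/2}$ and leaves $\big(\int |f|^2\,d\nu\big)^{1/2}$ with $\nu$ the localised $\mu*\phi_{j_0}$. H\"older now reads $\int|f|^2\,d(\nu*\zeta)\le\|f\|_{L^6(Y)}^2\,\big(\int_Y(\nu*\zeta)^{3/2}\big)^{2/3}$, with $Y$ chosen by pigeonholing so that each $R^{1/2}$-ball in $Y$ meets $\sim M$ tubes. Refined Strichartz gives the factor $(M/|\mathbb{W}|)^{1/3}$ on $\|f\|_{L^6(Y)}$, while
\[
\int_Y(\nu*\zeta)^{3/2}\lesssim \|\nu*\zeta\|_\infty^{1/2}\cdot\frac{1}{M}\sum_{T\in\mathbb{W}}\mu(4T)\lesssim \frac{2^{\frac{j}{2}(3-\alpha)}\,|\mathbb{W}|}{M}\,2^{-k\alpha_0'/2-\alpha(j-k)},
\]
using $c_\alpha(\mu)\le1$ for the $L^\infty$ bound and the good-tube threshold for $\mu(4T)$. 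The factors $(M/|\mathbb{W}|)^{2/3}$ and $(|\mathbb{W}|/M)^{2/3}$ cancel exactly, and the whole chain is a bootstrap $X\lesssim C\,X^{1/2}$ with $X=\sum_T\|M_T\|_2^2$, yielding $X\lesssim C^2$.

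Your split against Lebesgue loses both mechanisms. Refined Strichartz only controls $\|F\|_{L^6(Y)}$ on a multiplicity-$M$ set $Y$; with no measure to pigeonhole against, nothing forces the relevant mass onto such a $Y$, so you cannot legitimately feed $\|F\|_{L^6(Y)}$ into $\|F\|_2^2\le\|F\|_6\|F\|_{6/5}$. More decisively, the $M$--$|\mathbb{W}|$ cancellation is unavailable: the $1/M$ in the paper arises only because each ball $Q\subseteq Y$ is overcounted $\sim M$ times when one sums $(\nu*\zeta)(3.5T)$ over tubes through $Q$, and that overcount only appears because one is integrating against $\mu$, not Lebesgue. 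Your ``parallel incidence/Frostman argument'' can at best give $M\le|\mathbb{W}|$, which discards the refined-Strichartz gain. And if you fall back on your direct estimate $\|M_T\|_2^2\le\|M_T\|_1\|M_T\|_\infty\lesssim 2^{3j-3k/2}\mu(2T)^2$ and sum using one factor of goodness plus $\sum_T\mu(2T)\lesssim 2^{k/2}\mu(\mathbb{R}^3)$, the $k$-exponent comes out as $-\tfrac12+\alpha-\tfrac{\alpha_0'}{2}$ rather than $-\tfrac12+\tfrac{2\alpha}{3}-\tfrac{\alpha_0'}{3}$; this is strictly worse whenever $\alpha_0'<2\alpha$, i.e.\ in the entire regime where the lemma is actually used (feeding it into Lemma~\ref{goodpart} would then only yield $\alpha_0\ge\min\{2,\alpha,1\}$, losing the $\frac{\alpha}{2}+\frac34$ bound).
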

\begin{proof} Suppose first that $k < j$. By the wave packet decomposition and Lemma~\ref{finiteoverlap}, there is a constant $C$ such that
\begin{multline} \label{goodtubes} \eqref{jkintegral} \leq C2^{-100j}\mu(\mathbb{R}^3)^2 + \int_{\widetilde{a}}^{\widetilde{b}} \int_{A_{j,k}}  \\
 \left\lvert \sum_{\left\lvert j'-j\right\rvert \leq C } \sum_{\substack{\left\lvert k'-k\right\rvert \leq C \\ k' \leq j'}} \sum_{\tau \in \Lambda_{j',k'}} \sum_{T \in \mathbb{T}_{\tau, g} } \mathcal{F}\left(M_T \left( \mu \ast \phi_{j_0} \right) \right)\left(\eta_1 \gamma'(\theta)+ \eta_2 \left( \gamma \times \gamma'\right)(\theta) \right)\right\rvert^2 \, d\eta \, d\theta. \end{multline}  
Let $\widetilde{\mu}$ be defined by setting $\widehat{\widetilde{\mu}}$ equal to the function inside the modulus signs above. Let $\{B_m\}_m$ be a finitely overlapping cover of $\mathbb{R}^3$ by balls of radius $2^{k-j}$, and let $\{ \vartheta_m\}_m$ be a corresponding subordinate smooth partition of unity. Then by changing variables (see~\eqref{stp1}--\eqref{unitspeed} below) and by Plancherel,
\begin{equation} \label{localised} \eqref{goodtubes} \lesssim C2^{-100j}\mu(\mathbb{R}^3)^2+ 2^{k/2-j}\sum_m \int_{\mathbb{R}^3} \left\lvert \widehat{ \widetilde{\mu} \vartheta_m} \right\rvert^2. \end{equation}
For each $m$ and for arbitrarily large $N$, the integral in \eqref{localised} satisfies 
\begin{multline} \label{wavepackets}   \int_{\mathbb{R}^3} \left\lvert \widehat{ \widetilde{\mu} \vartheta_m} \right\rvert^2
\lesssim \\
C_N2^{-kN}\mu\left( \mathbb{R}^3 \right)^2 +  \sum_{\left\lvert j'-j\right\rvert \leq C } \sum_{\substack{\left\lvert k'-k\right\rvert \leq C \\ k' \leq j'}} \sum_{\tau \in \Lambda_{j',k'}} \sum_{\substack{T \in \mathbb{T}_{\tau, g}  \\ T \cap B_m \neq \emptyset }} \int \left\lvert M_T \left(\mu \ast \phi_{j_0} \right) \right\rvert^2,  \end{multline} 
which follows from the ``essential orthogonality'' of wave packets. 

A similar inequality will be shown in the case $k=j$. By the wave packet decomposition and Lemma~\ref{finiteoverlap},
\begin{multline*} \eqref{jkintegral} \leq C2^{-100j}\mu(\mathbb{R}^3)^2 +  \int_{\widetilde{a}}^{\widetilde{b}} \int_{A_{j,j}}  \\
 \left\lvert \sum_{\left\lvert j'-j\right\rvert \leq C } \sum_{\substack{\left\lvert k'-j\right\rvert \leq C \\ k' \leq j'}} \sum_{\tau \in \Lambda_{j',k'}} \sum_{T \in \mathbb{T}_{\tau, g} } \mathcal{F} \left( M_T \left( \mu \ast \phi_{j_0} \right)\right)\left(\eta_1 \gamma'(\theta)+ \eta_2 \left( \gamma \times \gamma'\right)(\theta) \right)\right\rvert^2 \, d\eta \, d\theta. \end{multline*}  
Let $\widetilde{\mu}$ be defined by setting $\widehat{\widetilde{\mu}}$ equal to the function inside the modulus signs above. By the finite overlapping property of the sets $\tau$, 
\begin{multline} \label{wavepackets2} \int_{\widetilde{a}}^{\widetilde{b}} \int_{A_{j,j}} \left\lvert \widehat{ \widetilde{\mu}}\left(\eta_1 \gamma'(\theta)+ \eta_2 \left( \gamma \times \gamma'\right)(\theta) \right) \right\rvert^2 \, d\eta \, d\theta \leq C2^{-100j}\mu(\mathbb{R}^3)^2
\\ + \sum_{\left\lvert j'-j\right\rvert \leq C } \sum_{\substack{\left\lvert k'-j\right\rvert \leq C \\ k' \leq j'}} \sum_{\tau \in \Lambda_{j',k'}}  \\
\int_{\widetilde{a}}^{\widetilde{b}} \int \left\lvert \sum_{\substack{T \in \mathbb{T}_{\tau, g}  \\ T \cap B(0,10) \neq \emptyset }} \sum_{T' \subseteq T} \mathcal{F}\left( M_{T'} \left(\mu \ast \phi_{j_0} \right)\right)\left(\eta_1 \gamma'(\theta)+ \eta_2 \left( \gamma \times \gamma'\right)(\theta) \right) \right\rvert^2 d\eta \, d\theta,  \end{multline} 
where the sets $T'$ cover $T$ with planks of dimensions $\approx 1 \times 2^{-j/2} \times 2^{-j}$, with long direction parallel to $\gamma(\theta_{\tau(T)})$, medium direction parallel to $\gamma'(\theta_{\tau(T)})$ and short direction parallel to $\left( \gamma \times \gamma'\right)(\theta_{\tau(T)} )$, and $M_{T'}\mu = \eta_{T'} M_T\mu$, where $\{\eta_{T'}\}_{T'}$ is a smooth partition of unity subordinate to the cover $\{T'\}_{T'}$. By the 2-dimensional Plancherel theorem followed by the uncertainty principle (bounding the $L^2$ norm by the $L^{\infty}$ norm, followed by Hausdorff-Young and Cauchy-Schwarz),
\begin{multline*} \eqref{wavepackets2} \lesssim C2^{-100j}\mu(\mathbb{R}^3)^2 \\
+ 2^{10j\delta-j/2} \sum_{\left\lvert j'-j\right\rvert \leq C } \sum_{\substack{\left\lvert k'-j\right\rvert \leq C \\ k' \leq j'}} \sum_{\tau \in \Lambda_{j',k'}} \sum_{\substack{T \in \mathbb{T}_{\tau, g}  \\ T \cap B(0,10) \neq \emptyset }} \int \left\lvert M_T \left(\mu \ast \phi_{j_0} \right) \right\rvert^2. \end{multline*} 
This shows that \eqref{localised}--\eqref{wavepackets} holds also in the case $k=j$, although possibly with a $2^{10j\delta}$ loss, and with $\{B_m\}_m$ equal to the cover of $B(0,1)$ by the single ball $B(0,10)$ in that case. The remainder of the proof will therefore cover both cases simultaneously ($k \leq j$). 

Applying Plancherel to the non-negligible term in the right hand side of \eqref{wavepackets} gives 
\begin{multline} \label{plancherel} \sum_{\left\lvert j'-j\right\rvert \leq C } \sum_{\substack{\left\lvert k'-k\right\rvert \leq C \\ k ' \leq j'}} \sum_{\tau \in \Lambda_{j',k'}} \sum_{\substack{T \in \mathbb{T}_{\tau, g}  \\ T \cap B_m \neq \emptyset }} \int \left\lvert M_T \left(\mu \ast \phi_{j_0} \right) \right\rvert^2 \\
= \int \sum_{\left\lvert j'-j\right\rvert \leq C } \sum_{\substack{\left\lvert k'-k\right\rvert \leq C \\ k ' \leq j'}} \sum_{\tau \in \Lambda_{j',k'}} \sum_{\substack{T \in \mathbb{T}_{\tau, g}  \\ T \cap B_m \neq \emptyset }}\left(\left[ \eta_T M_T \left(\mu \ast \phi_{j_0} \right) \right] \ast \widecheck{\psi_{\tau} }\right) d\left( \mu \ast \phi_{j_0} \right). \end{multline} 
Let $\nu$ be the restriction of $\mu \ast \phi_{j_0}$ to $2^{10k\delta}B_m$. By Cauchy-Schwarz,
\begin{multline} \label{above} \eqref{plancherel} \leq C_N2^{-kN} \mu\left( \mathbb{R}^3 \right)^2 +  \mu\left( 2^{100k\delta}B_m \right)^{1/2} \times \\
 \left(\int \left\lvert \sum_{\left\lvert j'-j\right\rvert \leq C } \sum_{\substack{\left\lvert k'-k\right\rvert \leq C \\ k ' \leq j'}} \sum_{\tau \in \Lambda_{j',k'}} \sum_{\substack{T \in \mathbb{T}_{\tau, g}  \\ T \cap B_m \neq \emptyset }}\left[ \eta_T M_T \left(\mu \ast\phi_{j_0} \right) \right] \ast \widecheck{\psi_{\tau} } \right\rvert^2 d\nu\right)^{1/2}. \end{multline} 
Let 
\[ f_T = \left[ \eta_T M_T \left(\mu \ast\phi_{j_0} \right) \right] \ast \widecheck{\psi_{\tau} }, \qquad  f = \sum_{\left\lvert j'-j\right\rvert \leq C } \sum_{\substack{\left\lvert k'-k\right\rvert \leq C \\ k ' \leq j'}} \sum_{\tau \in \Lambda_{j',k'}} \sum_{\substack{T \in \mathbb{T}_{\tau, g}  \\ T \cap B_m \neq \emptyset }}f_T.   \]
The integral in \eqref{above} satisfies 
\[ \int \left\lvert f \right\rvert^2 d\nu \lesssim \int \left\lvert f \right\rvert^2 d(\nu \ast \zeta), \]
where $\zeta(x) = \frac{ 2^{3j}}{1+2^{jN}\left\lvert x\right\rvert^N}$ for some very large $N$. This follows from the uncertainty principle since $\widehat{f}$ is supported in a ball of radius $\lesssim 2^j$. By dyadic pigeonholing, there is a subset
\[ \mathbb{W} \subseteq  \bigcup_{\left\lvert j'-j\right\rvert \leq C } \bigcup_{\substack{\left\lvert k'-k\right\rvert \leq C \\ k ' \leq j'}} \bigcup_{\tau \in \Lambda_{j',k'}}\left\{ T \in \mathbb{T}_{\tau, g}: T \cap B_m \neq \emptyset  \right\}, \]
such that $\lVert f_T \rVert_2$ is constant up to a factor of 2 as $T$ varies over $\mathbb{W}$, and
\begin{multline*} \int \left\lvert f \right\rvert^2 d(\nu \ast \zeta) \lesssim \log\left( 2^j \right)^2  \int \left\lvert \sum_{T \in \mathbb{W}} f_T \right\rvert^2 d(\nu \ast \zeta) \\
+2^{-100j} \sum_{\left\lvert j'-j\right\rvert \leq C } \sum_{\substack{\left\lvert k'-k\right\rvert \leq C \\ k ' \leq j'}} \sum_{\tau \in \Lambda_{j',k'}} \sum_{\substack{T \in \mathbb{T}_{\tau, g}  \\ T \cap B_m \neq \emptyset }} \lVert f_T \rVert_2^2. \end{multline*}
By pigeonholing again and by Hölder's inequality, there is a disjoint union $Y$ of balls $Q$ of radius $2^{-j+k/2}$, such that 
\begin{equation} \label{holder} \int \left\lvert \sum_{T \in \mathbb{W}} f_T \right\rvert^2 d(\nu \ast \zeta) \lesssim \log\left( 2^k \right) \left\lVert f \right\rVert_{L^6(Y) }^2 \left( \int_Y \left( \nu \ast \zeta\right)^{3/2} \right)^{2/3},  \end{equation}
and such that each $Q \subseteq Y$ intersects a number $\# \in [M,2M)$ boxes $3T$ as $T$ varies over $\mathbb{W}$, for some dyadic number $M$. By rescaling and then applying the refined Strichartz inequality (Theorem~\ref{refinedstrichartz}) with $p=6$, the first factor in \eqref{holder} satisfies
\[ \left\lVert f \right\rVert_{L^6(Y) } \leq C_{\epsilon,\delta} 2^{j-k/2+k\epsilon} \left( \frac{M}{\left\lvert \mathbb{W} \right\rvert } \right)^{1/3} \left( \sum_{T \in \mathbb{W} } \left\lVert f_T \right\rVert_2^2 \right)^{1/2}. \]
For the second factor in \eqref{holder}, the assumed inequality $c_{\alpha}(\mu) \leq 1$ implies that $\left\lVert \nu \ast \zeta\right\rVert_{\infty} \lesssim 2^{j(3-\alpha)}$. Hence 
\begin{align*} \int_Y \left( \nu \ast \zeta\right)^{3/2} &\lesssim \frac{2^{\frac{j}{2}(3-\alpha)} }{M} \sum_{T \in \mathbb{W} } \left( \nu \ast \zeta \right)(3.5T) \\
&\leq \frac{2^{\frac{j}{2}(3-\alpha)} }{M} \sum_{T \in \mathbb{W} }  \mu (4T) + C_N 2^{-jN} \mu\left(\mathbb{R}^3 \right) \\
&\lesssim \frac{2^{\frac{j}{2}(3-\alpha)} \left\lvert \mathbb{W} \right\rvert 2^{-k\alpha_0'/2 -\alpha(j-k)} }{M} + C_N 2^{-jN}. \end{align*} 
Hence 
\begin{equation} \label{combined} \eqref{holder} \lesssim \log(2^k) 2^{j(3-\alpha) +k\left(-1 + \frac{2\alpha}{3} - \frac{\alpha_0'}{3}+\epsilon \right)} \sum_{T \in \mathbb{W}} \left\lVert f_T \right\rVert_2^2. \end{equation}
By Plancherel, $\left\lVert f_T\right\rVert_2 \leq \left\lVert M_T \left( \mu \ast \phi_{j_0} \right) \right\rVert_2$ for every $T$. Assuming the tail terms are not dominant, substituting into \eqref{combined} and then into \eqref{plancherel} yields
\begin{multline*} \sum_{\left\lvert j'-j\right\rvert \leq C } \sum_{\substack{\left\lvert k'-k\right\rvert \leq C \\ k ' \leq j'}} \sum_{\tau \in \Lambda_{j',k'}} \sum_{\substack{T \in \mathbb{T}_{\tau, g}  \\ T \cap B_m \neq \emptyset }} \int \left\lvert M_T \left( \mu \ast \phi_{j_0} \right) \right\rvert^2 \\
\lesssim \mu\left( 2^{100k\delta} B_m \right)^{1/2} 2^{10j\epsilon+ \frac{1}{2} \left[ j(3-\alpha) + k \left( -1 + \frac{2\alpha}{3} - \frac{\alpha_0'}{3} \right) \right] } \\
\times \left( \sum_{\left\lvert j'-j\right\rvert \leq C } \sum_{\substack{\left\lvert k'-k\right\rvert \leq C \\ k ' \leq j'}} \sum_{\tau \in \Lambda_{j',k'}} \sum_{\substack{T \in \mathbb{T}_{\tau, g}  \\ T \cap B_m \neq \emptyset }} \int \left\lvert M_T \left( \mu \ast \phi_{j_0} \right) \right\rvert^2  \right)^{1/2}. \end{multline*}  
By cancelling the common factors, this yields 
\begin{multline*} \sum_{\left\lvert j'-j\right\rvert \leq C } \sum_{\substack{\left\lvert k'-k\right\rvert \leq C \\ k ' \leq j'}} \sum_{\tau \in \Lambda_{j',k'}} \sum_{\substack{T \in \mathbb{T}_{\tau, g}  \\ T \cap B_m \neq \emptyset }} \int \left\lvert M_T \left( \mu \ast \phi_{j_0} \right) \right\rvert^2 \\
\lesssim \mu\left( 2^{100k\delta} B_m \right) 2^{20j\epsilon+ j(3-\alpha) + k \left( -1 + \frac{2\alpha}{3} - \frac{\alpha_0'}{3} \right)}. \end{multline*}  
By substituting back into \eqref{localised}--\eqref{wavepackets} and summing over $m$, this proves the lemma (the $m$ for which the tail terms dominate make a negligible contribution to the sum). \end{proof} 

\begin{lemma} \label{trivial} Let $\alpha \in [0,3]$, $\alpha_0' >0$, and let $\epsilon, \delta>0$ with $\delta < \epsilon/1000$. Let $\gamma: [a,b] \to S^2$ be a $C^2$ unit speed curve satisfying \eqref{angleassumption}, \eqref{meanvalue} and \eqref{pgc}. Then there is a constant $C$ and a positive integer $J_0$ such that for all $j_0 \geq J \geq J_0$ and for all finite Borel measures $\mu$ on the unit ball,
\[ \int_a^b \int_{B\left(0, 2^{3J}\right)} \left\lvert \widehat{\mu_g}\left( \eta_1 \gamma'(\theta)+ \eta_2 \left( \gamma \times \gamma'\right)(\theta) \right)\right\rvert^2 \, d\eta \, d\theta \leq C2^{CJ}\mu\left( \mathbb{R}^3 \right)^2. \]
\end{lemma}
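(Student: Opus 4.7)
The plan is to derive a uniform pointwise bound $|\widehat{\mu_g}(\xi)|\lesssim 2^{CJ}\mu(\mathbb{R}^3)$ for all $\xi\in B(0,2^{3J})$ and an absolute constant $C$, and then to conclude by multiplying by the Lebesgue measure $(b-a)\pi 2^{6J}$ of the integration region $[a,b]\times B(0,2^{3J})$. Writing $\widehat{\mu_g}=\widehat{\mu}\widehat{\phi_{j_0}}-\widehat{\mu_b}$, the first summand is bounded uniformly by $\|\phi\|_1\mu(\mathbb{R}^3)$ since $|\widehat{\mu}|\leq\mu(\mathbb{R}^3)$. For the second, expanding via Lemma~\ref{decomp} gives
\[
\widehat{\mu_b}(\xi)=\sum_{j>2J}\sum_{\substack{k\in[j\epsilon,j]\\k>J}}\sum_{\tau\in\Lambda_{j,k}}\sum_{T\in\mathbb{T}_{\tau,b}}\widehat{M_T(\mu\ast\phi_{j_0})}(\xi),
\]
and each summand $\widehat{M_T(\mu\ast\phi_{j_0})}=\widehat{\eta_T}\ast(\widehat{\mu\ast\phi_{j_0}}\,\psi_\tau)$ is essentially Fourier-supported in $1.01\tau$ with rapid Schwartz decay outside.

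I would split the outer sum into \emph{near} $\tau$'s (those with $\xi\in 1.01\tau$) and \emph{far} $\tau$'s. For the near contribution, since $\tau\in\Lambda_{j,k}$ is centred on the annulus of radius $\approx 2^j$ in the $(\gamma\times\gamma')$ direction, only scales $j\leq 3J+O(1)$ can accommodate $\xi\in B(0,2^{3J})$, which together with $k\in(J,j]$ gives $O(J^2)$ pairs $(j,k)$. By Lemma~\ref{finiteoverlap}, at each such scale at most $O(1)$ orientations $\tau$ satisfy $\xi\in 1.01\tau$. For each near $\tau$, Lemma~\ref{pushtwo} combined with the bounded overlap of $\{2T\}_{T\in\mathbb{T}_\tau}$ yields
\[
\sum_{T\in\mathbb{T}_{\tau,b}}\bigl|\widehat{M_T(\mu\ast\phi_{j_0})}(\xi)\bigr|\leq\sum_T\|M_T(\mu\ast\phi_{j_0})\|_1\lesssim 2^{3j\delta}\mu(\mathbb{R}^3)+|\mathbb{T}_{\tau,b}|\cdot 2^{-JN}\mu(\mathbb{R}^3),
\]
where the crude estimate $|\mathbb{T}_{\tau,b}|\leq 2^{O(J)}$ and a choice of $N$ sufficiently large renders the tail term negligible. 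Summing the $O(J^2)$ contributions yields a near total $\lesssim J^2\cdot 2^{9J\delta}\mu(\mathbb{R}^3)\lesssim 2^{J}\mu(\mathbb{R}^3)$ under the hypothesis $\delta<\epsilon/1000$ and for $J\geq J_0$ large enough.

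For the far $\tau$'s, I would invoke the anisotropic Schwartz decay of $\widehat{\eta_T}$, combined with the decay $|\widehat{\mu\ast\phi_{j_0}}(\zeta)|\lesssim_M\mu(\mathbb{R}^3)(|\zeta|/2^{j_0})^{-M}$ for $|\zeta|>2^{j_0}$, to bound each far term polynomially in both the anisotropic distance from $\xi$ to $\tau$ and in $(j-j_0)_+$. Choosing the Schwartz exponent large enough to beat the tube-counting factor $2^{O(j-k)}$ and summing geometrically, the aggregate far contribution collapses to $\lesssim\mu(\mathbb{R}^3)$. Altogether $|\widehat{\mu_g}(\xi)|\lesssim 2^{J}\mu(\mathbb{R}^3)$, and integrating over the region of Lebesgue measure $\leq\pi 2^{6J}(b-a)$ delivers the desired bound with $C=8$, say. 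The main technical obstacle is this far-$\tau$ sum: making the Schwartz/volume tradeoff uniform in $j_0\geq J$, which relies crucially on the fact that tubes in $\mathbb{T}_{\tau,b}$ must intersect the unit ball (bounding their count) and that $\widehat{\phi_{j_0}}$ provides extra decay precisely when $j>j_0$.
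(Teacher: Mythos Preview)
Your approach is correct and is essentially a detailed expansion of the paper's one-line proof, which reads in full: ``This follows from the trivial bound on each $M_T(\mu \ast \phi_{j_0})$, and the rapid decay of $\mathcal{F}\left(M_T(\mu \ast \phi_{j_0})\right)$ outside $1.01\tau(T)$, for each $\tau$.'' Your near/far split, the use of Lemma~\ref{pushtwo} for the $L^1$ bounds, and the Schwartz-tail summation for far $\tau$'s are exactly the ingredients the paper alludes to; you have simply made explicit what the paper leaves implicit.
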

\begin{proof} This follows from the trivial bound on each $M_T(\mu \ast \phi_{j_0})$, and the rapid decay of $\mathcal{F}\left(M_T(\mu \ast \phi_{j_0})\right)$ outside $1.01\tau(T)$, for each $\tau$. \end{proof}

\begin{lemma} \label{goodpart} Suppose that $\gamma: [a,b] \to S^2$ is a $C^3$ unit speed curve satisfying \eqref{angleassumption}, \eqref{meanvalue} and \eqref{pgc} on $[a,b]$. Let $\left[\widetilde{a},\widetilde{b}\right] \subseteq (a,b)$ be such that $\left\lvert \widetilde{b}-\widetilde{a}\right\rvert \leq \sigma$, where $\sigma$ is a constant that works in Lemma~\ref{taudistance}. Let $\alpha \in [0,3]$ and $\epsilon>0$. If $\alpha_0' \in \left(0,  \alpha_0(\alpha,1, \gamma\restriction_{\left[ \widetilde{a}, \widetilde{b} \right] })\right)$, then there exists $\delta_0 > 0$, and for any $\delta \in (0, \delta_0]$ a $J_0 \geq 0$, such that for all $j_0 \geq 3J$ and $J \geq J_0$ with $J \in [(j_0\epsilon)/1000, 1000j_0\epsilon]$ and for all Borel measures $\mu$ on the unit ball with $c_{\alpha}(\mu) \leq 1$, 
\begin{equation} \label{goal2} \int_{\widetilde{a}}^{\widetilde{b}} \int \left\lvert \pi_{\theta\#} \mu_g \right\rvert^2 \, d\mathcal{H}^2 \, d\theta  \leq \mu\left( \mathbb{R}^3 \right) 2^{j_0\left( \max\left\{0,  2- \alpha, \frac{3}{2} -  \frac{\alpha}{3} - \frac{\alpha_0'}{3} \right\} + 10^4\epsilon \right)}. \end{equation}
\end{lemma}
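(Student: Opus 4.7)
The plan is to apply Plancherel fibrewise on $\gamma(\theta)^\perp$ to rewrite
\[
\int |\pi_{\theta\#}\mu_g|^2 \, d\mathcal{H}^2 = \int_{\mathbb{R}^2} |\widehat{\mu_g}(\eta_1\gamma'(\theta)+\eta_2(\gamma\times\gamma')(\theta))|^2 \, d\eta,
\]
and restrict the $\eta$-domain to $|\eta|\leq 2^{j_0(1+\delta)}$ using the Schwartz decay of $\widehat{\phi_{j_0}}$ and of $\mathcal{F}(M_T(\mu \ast \phi_{j_0}))$ off $1.01\tau(T)$. I then partition this domain into three regimes: (i)~$|\eta|\leq 2^{3J}$; (ii)~the transverse regime $|\eta|>2^{3J}$ with $|\eta_1|\geq |\eta_2|^{1-\epsilon}$; and (iii)~the tangential regime $|\eta|>2^{3J}$ with $|\eta_1|<|\eta_2|^{1-\epsilon}$.

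Regime~(i) is handled by Lemma~\ref{trivial}: its bound $C 2^{CJ}\mu(\mathbb{R}^3)^2$ combined with $J\leq 1000 j_0\epsilon$ and $\mu(\mathbb{R}^3)\lesssim 1$ (forced by $c_\alpha(\mu)\leq 1$ and the unit-ball support) gives a contribution of at most $2^{10^4 j_0\epsilon}\mu(\mathbb{R}^3)$. For~(ii), I apply Lemma~\ref{easypart} to replace $\mu_g$ by $\mu$ (up to an additive $4\mu(\mathbb{R}^3)^2$) and then change variables $\xi=\eta_1\gamma'(\theta)+\eta_2(\gamma\times\gamma')(\theta)$. A short computation using $\gamma'\times(\gamma\times\gamma')=\gamma$ (for unit-speed $\gamma$) and $\langle\gamma,\gamma''\rangle=-1$ identifies the Jacobian as $|\eta_1|$. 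The transverse hypothesis forces $|\eta_1|\gtrsim |\xi|^{1-\epsilon}$, so a dyadic frequency decomposition together with the Frostman bound $\int_{|\xi|\sim 2^n}|\widehat{\mu}|^2 d\xi \lesssim 2^{n(3-\alpha)}\mu(\mathbb{R}^3)$ (up to $\epsilon$-loss) yields a contribution of at most $2^{j_0\max\{0,\,2-\alpha\}+O(j_0\epsilon)}\mu(\mathbb{R}^3)$.

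Regime~(iii) is the heart of the proof. The dyadic decomposition $|\eta|\sim 2^j$, $|\eta_1|\sim 2^{j-k/2}$, with the endpoint $k=j$ capturing the near-degenerate strip $|\eta_1|\lesssim 2^{j/2}$, is precisely the annular partition $A_{j,k}$; these annuli cover the tangential regime as $j$ ranges over $[3J, j_0(1+\delta)]$ and $k$ over $[j\epsilon, j]$. Lemma~\ref{kneqjcase} supplies
\[
\int_a^b\!\!\int_{A_{j,k}}|\widehat{\mu_g}(\eta_1\gamma'(\theta)+\eta_2(\gamma\times\gamma')(\theta))|^2 \, d\eta\, d\theta \leq \mu(\mathbb{R}^3)\, 2^{100 j\epsilon + j(2-\alpha)+k\left(-\frac{1}{2} + \frac{2\alpha}{3} - \frac{\alpha_0'}{3}\right)},
\]
and summing first in $k$ (the supremum is attained at $k=j$ or $k=j\epsilon$ according to the sign of $-\tfrac12+\tfrac{2\alpha}{3}-\tfrac{\alpha_0'}{3}$) produces a $j$-exponent $\max\{2-\alpha,\,\tfrac{3}{2}-\tfrac{\alpha}{3}-\tfrac{\alpha_0'}{3}\}+O(\epsilon)$; the outer sum in $j$ is dominated at $j=j_0(1+\delta)$, and exhausts the $j$-range once the $2^{10^4 j_0\epsilon}$ slack is permitted.

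Combining the three regimes yields \eqref{goal2} after choosing $\delta_0\leq \epsilon^{100}$ (inherited from Lemma~\ref{kneqjcase}) and $J_0$ large enough that the cost $2^{CJ}$ from Lemma~\ref{trivial} is dominated by $2^{10^4 j_0\epsilon}$. The main obstacle I anticipate is coordinating the many $\epsilon$-losses---the Frostman $\epsilon$-loss in~(ii), the $100 j\epsilon$ loss from Lemma~\ref{kneqjcase}, and the $2^{CJ}$ loss from Lemma~\ref{trivial}---so that their aggregate remains below $10^4 j_0\epsilon$; this is a parameter-matching exercise once the three-part decomposition is in place.
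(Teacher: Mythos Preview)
Your proposal is correct and follows essentially the same approach as the paper's proof: Plancherel, truncation to $|\eta|\le 2^{j_0(1+\delta)}$ via the decay of $\widehat{\phi_{j_0}}$, a transverse/tangential split along $|\eta_1|\gtrless|\eta_2|^{1-\epsilon}$, then Lemma~\ref{easypart} plus the Jacobian computation for the transverse part and the dyadic $A_{j,k}$ decomposition with Lemmas~\ref{kneqjcase} and~\ref{trivial} for the tangential part. The only cosmetic difference is that you peel off the low-frequency ball $B(0,2^{3J})$ at the outset, whereas the paper peels it off only inside the tangential region; since Lemma~\ref{trivial} bounds the full ball anyway, this is immaterial.
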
 
\begin{proof} By Plancherel, 
\begin{multline} \label{localise} \int_{\widetilde{a}}^{\widetilde{b}} \int \left\lvert \pi_{\theta \#} \mu_g \right\rvert^2 \, d\mathcal{H}^2 \, d\theta  \\
= \int_{\widetilde{a}}^{\widetilde{b}} \int_{B\left(0, 2^{j_0(1+\delta) } \right) } \left\lvert \widehat{\mu_g}\left( \eta_1 \gamma'(\theta)+ \eta_2 \left( \gamma \times \gamma'\right)(\theta) \right)\right\rvert^2 \, d\eta \, d\theta \\
+\int_{\widetilde{a}}^{\widetilde{b}} \int_{\mathbb{R}^2 \setminus B\left(0, 2^{j_0(1+\delta) } \right) }  \left\lvert \widehat{\mu_g}\left( \eta_1 \gamma'(\theta) + \eta_2 \left( \gamma \times \gamma'\right)(\theta) \right)\right\rvert^2 \, d\eta \, d\theta. \end{multline}
The inequality 
\[ \int_{\widetilde{a}}^{\widetilde{b}} \int_{\mathbb{R}^2 \setminus B\left(0, 2^{j_0(1+\delta) } \right) }  \left\lvert \widehat{\mu_g}\left( \eta_1 \gamma'(\theta) + \eta_2 \left( \gamma \times \gamma'\right)(\theta) \right)\right\rvert^2 \, d\eta \, d\theta \lesssim \mu\left( \mathbb{R}^3 \right)^2 \]
follows straightforwardly from the rapid decay of $\widehat{\phi_{j_0}}$ outside $B\left(0, 2^{j_0} \right)$; see \cite[pp.~13--14]{harris20} for a more detailed calculation of a similar inequality.  The other term in \eqref{localise} can be written as
\begin{multline} \label{splitting} \int_{\widetilde{a}}^{\widetilde{b}} \int_{B\left(0, 2^{j_0(1+\delta) } \right) } \left\lvert \widehat{\mu_g}\left( \eta_1 \gamma'(\theta)+ \eta_2 \left( \gamma \times \gamma'\right)(\theta) \right)\right\rvert^2 \, d\eta \, d\theta \\
 = \int_{\widetilde{a}}^{\widetilde{b}} \int_{\{ \left\lvert \eta_1\right\rvert \geq \left\lvert \eta_2\right\rvert^{1-\epsilon} \} \cap B\left(0, 2^{j_0(1+\delta) } \right) } \left\lvert \widehat{\mu_g}\left( \eta_1 \gamma'(\theta)+ \eta_2 \left( \gamma \times \gamma'\right)(\theta) \right)\right\rvert^2 \, d\eta \, d\theta \\
+ \int_{\widetilde{a}}^{\widetilde{b}} \int_{\{ \left\lvert \eta_1\right\rvert < \left\lvert \eta_2\right\rvert^{1-\epsilon} \} \cap B\left(0, 2^{j_0(1+\delta) } \right) } \left\lvert \widehat{\mu_g}\left( \eta_1 \gamma'(\theta)+ \eta_2 \left( \gamma \times \gamma'\right)(\theta) \right)\right\rvert^2 \, d\eta \, d\theta. \end{multline}
By Lemma~\ref{easypart}, the first term satisfies 
\begin{multline} \label{energymu} \int_{\widetilde{a}}^{\widetilde{b}} \int_{\{ \left\lvert \eta_1\right\rvert \geq \left\lvert \eta_2\right\rvert^{1-\epsilon} \} \cap B\left(0, 2^{j_0(1+\delta) } \right) } \left\lvert \widehat{\mu_g}\left( \eta_1 \gamma'(\theta)+ \eta_2 \left( \gamma \times \gamma'\right)(\theta) \right)\right\rvert^2 \, d\eta \, d\theta\\
 \lesssim \mu\left( \mathbb{R}^3 \right)^2 +  \int_{\widetilde{a}}^{\widetilde{b}} \int_{\{ \left\lvert \eta_1\right\rvert \geq \left\lvert \eta_2\right\rvert^{1-\epsilon} \} \cap B\left(0, 2^{j_0(1+\delta) } \right)} \left\lvert \widehat{\mu}\left( \eta_1 \gamma'(\theta)+ \eta_2 \left( \gamma \times \gamma'\right)(\theta) \right)\right\rvert^2 \, d\eta \, d\theta. \end{multline} 
The change of variables 
\[ \xi = \xi(\eta, \theta) =  \eta_1 \gamma'(\theta)+ \eta_2 \left( \gamma \times \gamma'\right)(\theta) \]
has Jacobian 
\begin{align}  \label{stp1} \left\lvert \frac{ \partial(\xi_1, \xi_2, \xi_3) }{\partial(\eta_1, \eta_2, \theta) }(\eta_1, \eta_2, \theta) \right\rvert &=  \left\lvert \eta_1\right\rvert \left\lvert \det\left(\left(\gamma\times \gamma'\right)(\theta), \gamma'(\theta),  \gamma''(\theta) \right) \right\rvert \\
\label{stp2} &= \left\lvert \eta_1\right\rvert \left\lvert \left\langle \left[\gamma' \times \left( \gamma \times \gamma' \right)\right](\theta), \gamma''(\theta) \right\rangle \right\rvert \\
\notag &= \left\lvert \eta_1\right\rvert \left\lvert \left\langle \gamma(\theta), \gamma''(\theta) \right\rangle \right\rvert \\
\label{unitspeed} &= \left\lvert \eta_1\right\rvert. \end{align} 
The line \eqref{unitspeed} above follows from the assumption that $\gamma$ is a curve in $S^2$ with unit speed, whilst \eqref{stp1} and \eqref{stp2} use the scalar triple product formula $\det(a,b,c) = \langle a, b \times c \rangle$ and the identity 
\[ \left(\gamma \times \gamma'\right)' = -\det\left(\gamma, \gamma', \gamma''\right) \gamma'. \]
Applying this change of variables to \eqref{energymu} gives
\begin{align*} \eqref{energymu}  &\lesssim   \mu\left( \mathbb{R}^3 \right)^2 +  \int_{B\left(0, 2^{j_0(1+\delta) } \right)} \left\lvert \xi \right\rvert^{\epsilon -1} \left\lvert \widehat{\mu}(\xi)\right\rvert^2 \, d\xi \\
&\leq   \mu\left( \mathbb{R}^3 \right)^2 +\begin{cases} 2^{j_0(1+\delta)(2+2\epsilon-\alpha) }I_{\alpha-\epsilon}(\mu) & \alpha \leq 2 \\
 2^{j_0(1+\delta)2\epsilon} I_{2-\epsilon}(\mu) & \alpha > 2 \end{cases} \\
&\lesssim  \mu\left( \mathbb{R}^3 \right)^2 +  2^{j_0(1+\delta)(\max\{0,2-\alpha\} + 2\epsilon) }\mu\left( \mathbb{R}^3 \right), \end{align*} 
which is much smaller than the right hand side of \eqref{goal2}. This bounds the first term in \eqref{splitting}. 

 It remains to bound the second term in \eqref{splitting}. This satisfies
\begin{align*} &\int_{\widetilde{a}}^{\widetilde{b}} \int_{\{ \left\lvert \eta_1\right\rvert < \left\lvert \eta_2\right\rvert^{1-\epsilon} \} \cap B\left(0, 2^{j_0(1+\delta) } \right) } \left\lvert \widehat{\mu_g}\left( \eta_1 \gamma'(\theta)+ \eta_2 \left( \gamma \times \gamma'\right)(\theta) \right)\right\rvert^2 \, d\eta \, d\theta \\
&\quad \leq \sum_{j \in [3J, j_0(1+\delta)] } \sum_{k \in [j\epsilon, j] } \int_{\widetilde{a}}^{\widetilde{b}}\int_{A_{j,k}} \left\lvert \widehat{\mu_g}\left( \eta_1 \gamma'(\theta)+ \eta_2 \left( \gamma \times \gamma'\right)(\theta) \right)\right\rvert^2 \, d\eta \, d\theta  \\
&\qquad + \int_{\widetilde{a}}^{\widetilde{b}} \int_{B\left(0, 2^{3J}\right)} \left\lvert \widehat{\mu_g}\left( \eta_1 \gamma'(\theta)+ \eta_2 \left( \gamma \times \gamma'\right)(\theta) \right)\right\rvert^2 \, d\eta \, d\theta \\
&\quad \lesssim \mu\left( \mathbb{R}^3 \right) 2^{j_0\left(\max\left\{ 0, 2-\alpha,  \frac{3}{2} -\frac{\alpha}{3} - \frac{\alpha_0'}{3} \right\}+200\epsilon \right)},
 \end{align*} 
by Lemmas~\ref{kneqjcase} and \ref{trivial}. This covers the final case and finishes the proof of the lemma. \end{proof} 

\begin{lemma} \label{conversion}
\label{alphastars} Let $\gamma: [a,b] \to S^2$ be a $C^3$ unit speed curve with $\det(\gamma, \gamma', \gamma'') \neq 0$ on $[a,b]$, let $\left[ \widetilde{a},\widetilde{b}\right] \subseteq (a,b)$, and let $\alpha \in (0,3]$. Then for any $\alpha^*>0$ with 
\[ \alpha^* < \min\left\{2, \alpha , \frac{\alpha}{3} + \frac{1}{2} + \frac{\alpha_0(\alpha,1,\gamma\restriction_{\left[\widetilde{a},\widetilde{b} \right]})}{3} \right\} , \]
there exist $\delta'',C>0$ such that 
\begin{equation} \label{recursion} \int_{\widetilde{a}}^{\widetilde{b}} \left( \pi_{\theta \#} \mu \right)\left( \bigcup_{D \in \mathbb{D}_{\theta}} D \right) \, d\theta \leq C \mu\left( \mathbb{R}^3 \right) R^{-\delta''}, \end{equation}
for all Borel measures $\mu$ on the unit ball with $c_{\alpha}(\mu) \leq 1$, for any $R \geq 1$, and for any collection of sets $\{\mathbb{D}_{\theta} : \theta \in \left[\widetilde{a},\widetilde{b} \right] \}$ such that the integrand of \eqref{recursion} is measurable, where, for each $\theta \in \left[ \widetilde{a}, \widetilde{b} \right]$, $\mathbb{D}_{\theta}$ is a disjoint set of at most $\mu(\mathbb{R}^3)R^{\alpha^*/2}$ discs in $\pi_{\theta}(\mathbb{R}^3)$ of radius $R^{-1/2}$. 

As a corollary,  $\alpha_0(\alpha,1,\gamma\restriction_{\left[\widetilde{a},\widetilde{b} \right]}) \geq \min\left\{ 2, \alpha, \frac{\alpha}{2} + \frac{3}{4} \right\}$. 
\end{lemma}
\begin{proof} By localisation it may be assumed that $\gamma$ satisfies \eqref{angleassumption}, \eqref{meanvalue} and \eqref{pgc} on $[a,b]$, and that $\left\lvert b-a\right\rvert \leq \sigma$ where $\sigma$ is a constant that works in Lemma~\ref{taudistance}. Let $\alpha, \alpha^*, R, \mu$ and the sets $\mathbb{D}_{\theta}$ be given. Define 
\[ \epsilon = \frac{ \min\left\{ \alpha, \frac{\alpha}{3} + \frac{1}{2} + \frac{\alpha_0}{3},2 \right\} - \alpha^* }{10^{10}}. \]
By the ``good-bad'' decomposition (Lemma~\ref{decomp}) with $\alpha_0' := \alpha_0-\epsilon$ and $\delta \ll \epsilon$, $J$ defined by $R^{\epsilon} \in \left[2^J, 2^{J+1} \right)$ and $j_0$ defined by $R^{1/2} \in \left[2^{j_0}, 2^{j_0+1} \right)$, 
\begin{align} \notag  &\int_{\widetilde{a}}^{\widetilde{b}} \left( \pi_{\theta \#} \mu \right)\left( \bigcup_{D \in \mathbb{D}_{\theta}} D \right) \, d\theta \\
\notag &\quad \lesssim \int_{\widetilde{a}}^{\widetilde{b}} \int \left\lvert \pi_{\theta \#} \mu_b \right\rvert d\mathcal{H}^2 \, d\theta + \sup_{\theta \in \left[\widetilde{a}, \widetilde{b}\right] } \mathcal{H}^2\left( \bigcup_{D \in \mathbb{D}_{\theta}} D \right)^{1/2} \left(\int_{\widetilde{a}}^{\widetilde{b}} \int \left\lvert \pi_{\theta \#} \mu_g \right\rvert^2 d\mathcal{H}^2 \, d\theta\right)^{1/2} \\
\label{dichotomy} &\quad \lesssim \int_{\widetilde{a}}^{\widetilde{b}} \int \left\lvert \pi_{\theta \#} \mu_b \right\rvert d\mathcal{H}^2 \, d\theta + \mu\left( \mathbb{R}^3 \right)^{1/2} R^{\frac{\alpha^*}{4} - \frac{1}{2} } \left(\int_{\widetilde{a}}^{\widetilde{b}} \int \left\lvert \pi_{\theta \#} \mu_g \right\rvert^2 d\mathcal{H}^2 \, d\theta\right)^{1/2}.  \end{align}
By Lemma~\ref{badpart}, 
\begin{equation} \label{badbound2} \int_{\widetilde{a}}^{\widetilde{b}} \int \left\lvert \pi_{\theta \#} \mu_b \right\rvert d\mathcal{H}^2 \, d\theta \leq \mu(\mathbb{R}^3) R^{-\epsilon \delta'}, \end{equation}
provided $R$ is sufficiently large.  By Lemma~\ref{goodpart},
\begin{multline} \label{goodbound} \mu\left( \mathbb{R}^3 \right)^{1/2} R^{\frac{\alpha^*}{4}- \frac{1}{2}} \left(\int_{\widetilde{a}}^{\widetilde{b}} \int \left\lvert \pi_{\theta \#} \mu_g \right\rvert^2 d\mathcal{H}^2 \, d\theta\right)^{1/2} \\
 \lesssim \mu(\mathbb{R}^3) R^{ \frac{1}{2}\max\left\{\alpha^*-2,  \alpha^* - \alpha, \alpha^* - \frac{1}{2} - \frac{\alpha}{3} -\frac{\alpha_0}{3}\right\} + 10^5\epsilon }. \end{multline}
Applying \eqref{badbound2} and \eqref{goodbound} to \eqref{dichotomy} yields \eqref{recursion} and proves the first part of the lemma, provided $\delta''$ is chosen sufficiently small, $R$ is taken sufficiently large, and the constant $C(\alpha, \alpha^*,\gamma)$ is then taken large enough to handle the small values of $R$. 

The last part of the lemma follows directly from the first part, and from Definition~\ref{alpha0defn} which defines $\alpha_0$. \end{proof} 

\begin{proof}[Proof of Theorem~\ref{mainthm}] Assume without loss of generality that $\dim A >0$ and that $A$ is a subset of the unit ball. Let $\epsilon >0$ be small, let $\alpha = \dim A - \epsilon$ and (using Frostman's lemma) let $\mu$ be a nonzero, finite Borel measure on $A$ with $c_{\alpha}(\mu) \leq 1$. Suppose that $E \subseteq [a+\epsilon,b-\epsilon]$ is a compact set such that
\[ \dim \pi_{\theta}\supp \mu \leq s := \min\left\{2, \alpha, \frac{\alpha}{2} + \frac{3}{4} \right\} - 10\epsilon, \]
for every $\theta \in E$. Let $\varepsilon>0$ be small. For each $\theta \in E$, let\footnote{Issues of measurability will be ignored since they can be easily adjusted for.} $\left\{ B\left(\pi_{\theta}(x_j(\theta)), r_j(\theta) \right) \right\}_{j=1}^{\infty}$ be a covering of $\pi_{\theta} \supp \mu$ by discs of dyadic radii smaller than $\varepsilon$, with each $x_j(\theta) \in \supp \mu$, such that 
\begin{equation} \label{notubes} \sum_j r_j(\theta)^{s+\epsilon} < 1. \end{equation}
It may be additionally assumed that $\pi_{\theta}(x_j(\theta)) \in \pi_{\theta} \supp \mu$ for every $j$ and $\theta$. For each $\theta \in E$ and each $k \geq \left\lvert \log_2 \varepsilon\right\rvert$ let 
\[ D_k(\theta) = \bigcup_{j : r_j(\theta) = 2^{-k} } B\left(\pi_{\theta}(x_j(\theta)), r_j(\theta) \right). \]
Then for each $\theta \in E$,
\[ 1 \leq \sum_{k \geq \left\lvert \log_2 \varepsilon\right\rvert} \left(\pi_{\theta \#} \mu\right)(D_k(\theta)). \]
By the Besicovitch covering theorem, for each $\theta \in E$ there is a disjoint subcollection $\left\{ B\left(\pi_{\theta}(x_j(\theta)), r_j(\theta) \right) \right\}_{j\in I}$ and corresponding subsets $D_k(\theta) ' \subseteq D_k(\theta)$, such that 
\[ 1 \lesssim \sum_{k \geq \left\lvert \log_2 \varepsilon\right\rvert} \left(\pi_{\theta \#} \mu\right)(D_k'(\theta)). \]
and hence 
\[ \mathcal{H}^1(E) \lesssim \sum_{k \geq \left\lvert \log_2 \varepsilon\right\rvert} \int_E  \left(\pi_{\theta \#} \mu\right)(D_k'(\theta)) \, d\theta. \] 
By \eqref{notubes}, for each $\theta$ and $k$ the set $D_k(\theta)'$ is the union of at most $2^{k(s+\epsilon)}$ disjoint discs of radius $2^{-k}$. By Lemma~\ref{alphastars}, there exists a $\delta>0$ independent of $\varepsilon$ such that
\begin{align*} \mathcal{H}^1(E) 
&\lesssim   \sum_{k \geq \left\lvert \log_2 \varepsilon\right\rvert} 2^{-k\delta} \\
&\lesssim \varepsilon^{ \delta}. \end{align*} 
Letting $\varepsilon \to 0$ gives $\mathcal{H}^1(E) = 0$. Hence 
\[ \dim \pi_{\theta}(A) \geq  \dim \pi_{\theta}\supp \mu \geq \min\left\{2,  \alpha, \frac{\alpha}{2} + \frac{3}{4} \right\} - 10\epsilon \]  
for a.e.~$\theta \in [a+\epsilon,b-\epsilon]$. The theorem follows by letting $\epsilon \to 0$ along a countable sequence. \end{proof}  

\section{Further improvement and related problems} \label{discussion}

A related problem is the family of projections $\rho_{\theta}(x) = \langle x, \gamma(\theta) \rangle \gamma(\theta)$ onto lines, where $\gamma$ is a smooth curve in $S^2$ with $\det(\gamma, \gamma', \gamma'')$ nonvanishing. For non-great circles this was resolved by Käenmäki-Orponen-Venieri in \cite{kaenmaki}. I do not know if the method of proof here would also work on this problem; I would guess that at least a different kind of refined Strichartz inequality would be needed, with ``slabs'' in place of ``tubes''. One application, due to Liu~\cite{liu7}, of the Käenmäki-Orponen-Venieri projection theorem is to give the sharp lower bound of 3 for the Hausdorff dimension of Kakeya sets in the first Heisenberg group $\mathbb{H}$. It would be interesting if Theorem~\ref{mainthm} could be analogously applied to generalised Besicovitch sets  in $\mathbb{H}$ (e.g.~sets containing a left translate of every vertical subgroup). 

I do not know if Theorem~\ref{mainthm} holds with $C^3$ replaced by $C^2$.  The only step in the proof of Theorem~\ref{mainthm} which seems to make crucial use of the $C^3$ assumption is the decoupling theorem for generalised cones, though the statement and some steps in the proof of the refined Strichartz inequality would likely become more technical if $\gamma$ were only assumed to be $C^2$. As far as I am aware, the decoupling theorem for generalised cones has only been proved in the literature with $C^3$ assumptions (see e.g.~\cite{pramanikseeger}).

Although the proof of Theorem~\ref{mainthm} has some similarities with the proof of the lower bound for the distance set problem in \cite{GIOW}, it makes crucial use of the fact that the projections $\pi_{\theta}$ are linear. Since the distance function is nonlinear, the recursive method of bounding the ``bad'' part in the proof of Theorem~\ref{mainthm} does not seem to work on the distance set problem. 

\begin{sloppypar} Another problem on restricted projections comes from the family of maps $S_g(x,y) = x-gy$ from $\mathbb{R}^{2n}$ to $\mathbb{R}^n$, where $n \geq 2$ is fixed and $g$ ranges over $O(n)$. In \cite{mattila2020} Mattila proved that 
\begin{equation} \label{dimbound} \dim S_g(A) \geq \max\{ \min\{\dim A,n-1\}, \min\{\dim A -1, n \} \}, \qquad \text{a.e.~$g \in O(n)$}, \end{equation}
and asked whether this can be replaced by $\dim S_g(A) \geq \min\{ n , \dim A\}$ for a.e.~$g \in O(n)$. A counterexample (obtained with help from A.~Barron) is the set $A= \{(x,x): x \in \mathbb{R}^n\}$; since every $g \in O(n)$ with $\det g = (-1)^{n+1}$ has 1 as an eigenvalue, the set $S_g(A)$ has dimension at most $n-1$ whenever $\det g = (-1)^{n+1}$. By taking direct sums with Cantor subsets of the plane $\{(x,-x): x \in \mathbb{R}^n\}$, this can be modified to show that \eqref{dimbound} is sharp for all values of $\dim A$. It would be interesting to know whether a better lower bound is possible if the requirement ``a.e.~$g \in O(n)$'' is weakened to ``with probability at least 1/2''. \end{sloppypar}


\begin{thebibliography}{99}


\bibitem{bourgaindemeter}
Bourgain,~J., Demeter,~C.: The proof of the $l^2$ Decoupling Conjecture.
\newblock Ann.~of Math. \textbf{182}, 351--389 (2015)

\bibitem{demeter}
Demeter, C.: Fourier restriction, decoupling, and applications.
\newblock Cambridge Studies in Advanced Mathematics, \textbf{184}, Cambridge University Press, Cambridge, (2020)


\bibitem{fasslerorponen14}
Fässler,~K., Orponen,~T.: On restricted families of projections in $\mathbb{R}^3$.
\newblock Proc.~London Math.~Soc. (3) \textbf{109}, 353--381 (2014)


\bibitem{GIOW}
Guth,~L., Iosevich,~A., Ou,~Y, Wang,~H.: On Falconer’s distance set problem in the plane.
\newblock Invent.~Math. \textbf{219}, 779--830 (2020)

\bibitem{harris20}
Harris,~T.~L.~J.: Improved bounds for restricted projection families via weighted Fourier restriction.
\newblock  arXiv:1911.00615v5, to appear in Anal.~PDE. (2019)

\bibitem{jarvenpaa1}
Järvenpää,~E., Järvenpää,~M., Ledrappier,~F., Leikas,~M.: One-dimensional families of projections. 
\newblock Nonlinearity \textbf{21} 453--463 (2008)

\bibitem{kaenmaki}
Käenmäki,~A., Orponen,~T., Venieri,~L.: A Marstrand-type restricted projection theorem in $\mathbb{R}^3$. 
\newblock arXiv:1708.04859v1, to appear in Amer.~J.~Math. (2017)

\bibitem{kaufman}
Kaufman,~R.: On Hausdorff dimension of projections.
\newblock Mathematika \textbf{15} 153--155 (1968)

\bibitem{liu}
Liu,~B.: Hausdorff dimension of pinned distance sets and the $L^2$-method.
\newblock Proc.~Amer.~Math.~Soc. \textbf{148} 333--341 (2020)

\bibitem{liu7}
Liu,~J.: On the dimension of Kakeya sets in the first Heisenberg group.
\newblock Proc.~Amer.~Math.~Soc.~\textbf{150}, 3445--3455 (2022)

\bibitem{marstrand2}
Marstrand,~J.: Some fundamental geometrical properties of plane sets of fractional
dimensions.
\newblock Proc.~Lond.~Math.~Soc. (3) \textbf{4}, 257--302 (1954)

\bibitem{mattila2020}
Mattila,~P.: Hausdorff dimension and projections related to intersections.
\newblock Publ.~Mat.~\textbf{66}, 305--323 (2022)

\bibitem{oberlin}
Oberlin,~D.~M., Oberlin,~R.: Application of a Fourier restriction theorem to certain families of projections in $\mathbb{R}^3$.
\newblock J.~Geom.~Anal. \textbf{25}, 1476--1491 (2015)

\bibitem{orponen}
Orponen,~T.: Hausdorff dimension estimates for restricted families of projections in $\mathbb{R}^3$.
\newblock Adv.~Math. \textbf{275}, 147--183 (2015)

\bibitem{venieri}
Orponen,~T., Venieri,~L.: Improved bounds for restricted families of projections to planes in $\mathbb{R}^3$. 
\newblock Int.~Math.~Res.~Not.~IMRN \textbf{19}, 5797--5813 (2020)

\bibitem{pramanikseeger}
Pramanik,~M., Seeger,~A.: $L^p$ regularity of averages over curves and bounds for associated maximal operators.
\newblock Amer.~J.~Math. \textbf{129}, 61--103 (2007)


\end{thebibliography}
\end{document}